\numberwithin{equation}{section}
\newtheorem{theorem}{Theorem}[section]
\newtheorem{lemma}[theorem]{Lemma}
\newtheorem{proposition}[theorem]{Proposition}
\newtheorem{corollary}[theorem]{Corollary}
\theoremstyle{definition}
\newcommand{\abs}[1]{|#1|}
\newcommand{\C}{\ensuremath{\mathbb{C}^n}}
\newcommand{\Ew}{\ensuremath{\mathbb{E}_\omega}}
\newcommand{\R}{\ensuremath{\mathbb{R}}}
\newcommand{\Z}{\ensuremath{\mathbb{Z}}}
\newcommand{\Rd}{\ensuremath{\mathbb{R}^d}}
\newcommand{\D}{\ensuremath{\mathscr{D}}}
\newcommand{\Dw}{\ensuremath{\mathscr{D}_\omega}}
\newcommand{\J}{\ensuremath{\mathscr{J}}}
\newcommand{\F}{\ensuremath{\mathscr{F}}}
\renewcommand{\H}{\ensuremath{\mathcal{H}}}
\newcommand{\BMOW}{\ensuremath{{\text{BMO}}_W ^p}}
\newcommand{\BMOWpq}{\ensuremath{{\text{BMO}}_W ^{p, q}}}
\newcommand{\BMOWq}{\ensuremath{{\text{BMO}}_{W^{1 - p'}}^{p'} }}
\newcommand{\Mn}{\ensuremath{\mathcal{M}_{n }}(\mathbb{C})}
\newcommand{\Span}{\text{span}}
\newcommand{\V}[1]{\ensuremath{\vec{#1}}}
\newcommand{\inrd}{\ensuremath{\int_{\Rd}}}
\newcommand{\tr}{\ensuremath{\text{tr}}}
\newcommand{\ip}[2]{\ensuremath{\left\langle#1,#2\right\rangle}}
\newcommand{\W}[1]{\ensuremath{\widetilde{#1}}}
\newcommand{\T}[1]{\ensuremath{\text{#1}}}
\newcommand{\MC}[1]{\ensuremath{\mathcal{#1}}}
\renewcommand{\S}{\ensuremath{\text{Sig}_d}}
\begin{document}

\title[A Matrix weighted $T1$ and John-Nirenberg theorem]{A matrix weighted $T1$ theorem for matrix kernelled CZOs and a matrix weighted John-Nirenberg theorem}


\author[Joshua Isralowitz]{Joshua Isralowitz}
\address[Joshua Isralowitz]{Department of Mathematics and Statistics \\
SUNY Albany \\
1400 Washington Ave. \\
 Albany, NY  \\
12222}
\email[Joshua Isralowitz]{jisralowitz@albany.edu}

\begin{abstract}
In this paper, we will prove a matrix weighted $T1$ theorem regarding the boundedness of certain matrix kernelled CZOs on matrix weighted $L^p(W)$ for matrix A${}_p$ weights $W$.  Using some of the ideas from the proof, we will also establish a natural matrix weighted John-Nirenberg result that extends to the matrix setting (in the case when one of the weights is the identity) a very recent characterization of both S. Bloom's BMO space and the two weight boundedness of commutators by I. Holmes, M. Lacey, and B. Wick.
\end{abstract}

\keywords{weighted norm inequalities, matrix weights, BMO}

\subjclass[2010]{ 42B20}
\maketitle



\section{Introduction}

Weighted norm inequalities for Calder\'{o}n-Zygmund operators (or CZOs for short) acting on ordinary $L^p(\Rd)$ is a classical topic that goes back to the 1970's with the seminal works  \cite{CF, HMW}. On the other hand, it is well known that proving matrix weighted norm inequalities for CZOs is a very difficult task and thus such matrix weighted norm inequalities have only recently been investigated (see \cite{TV, V} for specific details of these difficulties).  In particular, if $W : \Rd \rightarrow \Mn$ is positive definite a.e., then define $L^p(W)$ to be the space of measurable $\vec{f} : \Rd \rightarrow \C$ with norm \begin{equation*} \|\vec{f}\|_{L^p(W)} ^p = \inrd |W^\frac{1}{p} (x) \vec{f}(x) |^p \, dx. \end{equation*} It was proved by F. Nazarov and S. Treil, M. Goldberg, and A. Volberg, respectively in \cite{NT, G, V}, that certain CZOs are bounded on $L^p(W)$ when $1 < p < \infty$ if  $W$ is a matrix A${}_p$ weight, which means that \begin{equation} \label{MatrixApDef} \sup_{\substack{I \subset \R^d \\ I \text{ is a cube}}} \frac{1}{|I|} \int_I \left( \frac{1}{|I|} \int_I \|W^{\frac{1}{p}} (x) W^{- \frac{1}{p}} (t) \|^{p'} \, dt \right)^\frac{p}{p'} \, dx  < \infty \end{equation} where $p'$ is the conjugate exponent of $p$ (note that an operator $T$ acting on scalar functions can be canonically extended to $\C$ valued functions via the action $T$ on its coordinate functions.)

It is known that CZOs with matrix valued kernels acting on $\C$ valued functions appear very naturally in various branches of mathematics (and as a particular example see \cite{IM1} for extensive applications of matrix kernelled CZOs to geometric function theory.) Despite this and despite the fact that the theory of matrix weights has numerous applications to Toeplitz operators, multivariate prediction theory, and even to the study of finitely generated shift invariant subspaces of unweighted $L^p(\Rd)$ (see \cite{NT, Mo, V}), virtually nothing (with the exception of the results in \cite{IKP}, which will be discussed momentarily) has been published regarding matrix weighted norm inequalities for matrix kernelled CZOs or similar operators.

The purpose of this paper is therefore to investigate the boundedness of matrix kernelled CZOs on $L^p(W)$ when $W$ is a matrix A${}_p$ weight.  We will need to introduce some more notation before we state our main result.  It is well known (see \cite{G} for example) that for any matrix weight $W,$ any $1 < p < \infty$, and any cube $I$, there exists (not necessarily unique) positive definite matricies  $V_I, V_I '$ such that  $|I|^{- \frac{1}{p}} \|1_I W^\frac{1}{p}  \vec{e}\|_{L^p} \approx |V_I \vec{e}|$ and $|I|^{- \frac{1}{p'}} \|1_I W^{-\frac{1}{p}} \vec{e}\|_{L^{p'}} \approx |V_I ' \vec{e}|$ for any $\vec{e} \in \C$, where $\|\cdot \|_{L^p}$ is the canonical $L^p(\R^d;\C)$ norm and the notation $A \approx B$ as usual means that two quantities $A$ and $B$ are bounded above and below by an unimportant constant multiple of each other.  It is not difficult to show that we may choose $V_I$ and $V_I'$ in such a way that $\|V_I V_I ' \| \geq 1$ for any cube $I$.  We will say that $W$ is a matrix A${}_p$ weight if the product $V_I V_I'$ has uniformly bounded matrix norm with respect to all cubes $I \subset \R^d$ (notice that this condition is easily seen to be equivalent to \eqref{MatrixApDef},  see \cite{R} for example). Also note that we may choose $V_I = (m_I W)^\frac{1}{2}$ and $V_I ' = (m_I (W^{-1}))^{\frac{1}{2}}$ when $p = 2$, where $m_I  W$ is the average of $W$ on $I$, so that the matrix A${}_2$ condition takes on a particularly simple form that is very similar to the scalar A${}_2$ condition.

The kind of matrix kernelled CZOs that we will investigate, roughly speaking, have matrix kernels $K : \Rd \times \Rd \backslash \Delta \rightarrow \Mn$ (where $\Delta$ as usual is the diagonal in $\Rd \times \Rd$)  that should be thought of as ``getting along well with the the matrix weighted $W$."  More precisely, let $\MC{S} = \Span\{1_J \vec{v} : J \text{ is a cube }, \V{v} \in \C\}$ and assume that $T $ defines a bilinear form on $\MC{S}$, which as usual will be denoted by $\ip{T\V{f}}{\V{g}}$ for $\V{f}, \V{g} \in \MC{S}$.    Further, if $\V{f}, \V{g} \in \MC{S}$ with disjoint support, then we assume that
\begin{equation*} \ip{T\V{f}}{\V{g}} = \inrd \inrd  K(x, y) \vec{f} (y) \vec{g} (x)\, dy \, dx  \end{equation*} and that a similar statement holds for $T^*$  (which is a bilinear form on $\MC{S}$ that is defined in the usual way) and $(K(y, x))^*$.    Second, for each cube $I \subset \Rd$, assume that the following ``standard kernel conditions" hold: \begin{equation*}
\begin{split}
 \sup_{\substack{I \subset \R^d \\  I \text{ is a cube}}} \|V_I K(x,y) V_I ^{-1}\| &\lesssim\frac{1}{\abs{x-y}^d}, \\
  \sup_{\substack{I \subset \R^d \\  I \text{ is a cube}}} \left(\|V_I (K(x,y)-K(x',y)) V_I ^{-1}\| + \|V_I ' (K^*(y,x)-K^*(y,x'))(V_I ') ^{-1}\| \right)
  &\lesssim  \frac{\abs{x-x'}^\alpha}{\abs{x-y}^{d + \alpha}}
\end{split}
\end{equation*}
for all $x,x',y\in\R^d$ with $\abs{x-y}>2\abs{x-x'}$ where $\alpha$ is independent of $I$. Finally, assume that $T$ satisfies the ``weak boundedness property" \begin{equation*} \sup_{(i, j) \in \{1, \ldots, n\}^2}\sup_{\substack{J  \subseteq I \\  I, J \text{ are cubes}}}   \frac{1}{|J|}  \left(|\ip{ V_I T V_I ^{-1} (1_J \V{e}_j)}{ 1_J \V{e}_i}| + |\ip{ V_I ' T^* (V_I') ^{-1} (1_J \V{e}_j)}{ 1_J \V{e}_i}| \right)< \infty  \end{equation*} where $1_J$ is the indicator function of the cube $J$ and $\{\vec{e}_k\}_{k = 1}^n$ is the standard orthonormal basis of $\C$.  We will call such an operator a $(W, p)$-CZO. In particular, notice that our ``weak boundedness property" implies that the usual weak boundedness property \begin{equation*} \sup_{\substack{J \subseteq \R^d \\  J \text{ is a cube}}} |\ip{T (1_J \V{u})}{1_J \V{v}}_{L^2}| \lesssim |J| |\V{u}| | \V{v}| \end{equation*} for any $u, v \in \C$ (and clearly a similar statement can be made regarding the standard unweighted ``size condition" and ``cancellation conditions" for $K$.)

Moreover, if $1 < p < \infty$ and $W$ is a matrix A${}_p$ weight, then let $\BMOW$ be the space from \cite{IKP} of locally integrable functions $B : \Rd \rightarrow \Mn$ where  \begin{equation*}     \left\{
     \begin{array}{lr}
     \displaystyle   \sup_{\substack{I \subset \R^d \\  I \text{ is a cube}}} \frac{1}{|I|} \int_I \|W^\frac{1}{p} (x) (B(x) - m_I B) V_I ^{-1}  \|^p \, dx < \infty  & :\text{ if } 2 \leq p  < \infty \\
     \displaystyle   \sup_{\substack{I \subset \R^d \\  I \text{ is a cube}}} \frac{1}{|I|} \int_I \|W^{-\frac{1}{p}} (x) (B^* (x) - m_I B^*) (V_I ')^{-1}  \|^{p'} \, dx < \infty & : \text{ if } 1 < p \leq 2
     \end{array}
   \right..
\end{equation*}

 Let us make a few brief comments about $\BMOW$ and the results in \cite{IKP}.  Note that while one can imagine various definitions of a matrix weighted BMO space that could replace the classical BMO space relative to the added noncommutativity,  the space $\BMOW$ (as was discussed in \cite{IKP}) is ``the" correct one in the sense that $[T, B]$ is bounded on $L^p(W)$ when $W$ is a matrix A${}_p$ weight and $T$ is any of the Riesz transforms $R_j$ for $j = 1, \dots, d$ if and only if $B \in \BMOW$. Furthermore, for any dyadic grid in $\mathbb{R}$ and any interval $I$ in this grid, let  \begin{equation*} h_I ^1 = |I|^{-\frac{1}{2}} 1_I (x), \,  \,  \,  \, \, \, \,  h_I ^0 (x) = |I|^{-\frac{1}{2}} (1_{I_\ell} (x) - 1_{I_r} (x))  \end{equation*} where $I_\ell$ and $I_r$ are the left and right halves of $I$, respectively.  Now given any dyadic grid $\D$ in $\mathbb{R}^d,$  any cube $I = I_1 \times \cdots \times I_d$, and any $\varepsilon \in \{0, 1\}^{d}$, let $h_I ^\varepsilon = \Pi_{i = 1}^d h_{I_i} ^{\varepsilon_i}$.  It is then easily seen that $\{h_I ^\varepsilon\}_{\{I \in \D, \  \varepsilon \in \S\}}$  where $\S = \{0, 1\}^d \backslash \{\vec{1}\}$ is an orthonormal basis for $L^2(\Rd)$.

  If again $\D$ is any dyadic lattice in $\Rd$ then let $\pi_B$ be the dyadic paraproduct defined by \begin{equation} \label{ParaprodDef} \pi_B \vec{f} = \sum_{\varepsilon \in \S} \sum_{I \in \D} B_I ^\varepsilon m_I (\vec{f}) h_I ^\varepsilon   \end{equation} where $\ m_I (\vec{f})$ is the vector average of $\vec{f}$ over $I$ and $B_I ^\varepsilon$ is the constant matrix whose entries are the Haar coefficients with respect to $h_I ^\varepsilon$.
 It was then proved in \cite{IKP} that $\pi_B$ is bounded on $L^p(W)$ when $W$ is a matrix A${}_p$ weight if and only if $B \in \BMOW$ (more precisely, if and only if $B$ is in the obvious dyadic version of $\BMOW$ relative to $\D$.)  Finally, we remark that it is easy to prove that in the scalar weighted setting, a scalar function $b$ is in $\BMOW$ if and only if $b \in \text{BMO}$.  In fact, it was shown in \cite{MW} that if $1 < p < \infty$ and $w \in \text{A}_\infty$ then $b \in \text{BMO}$ if and only if the first condition in the definition of $\BMOW$ is true, and if $1 < p < \infty$ and $w^{1 - p'}  \in \text{A}_\infty$ then $b \in \text{BMO}$ if and only if the second condition in the definition of $\BMOW$ is true.

 We can now state the main result of this paper, which is the following matrix weighted $T1$ theorem (and gives further evidence of the ``correctness" of the space $\BMOW$.)    \begin{theorem}\label{T1Thm}   \noindent If $W$ is a matrix A${}_p$ weight and $T$ is a $(W, p)$-CZO on $\Rd$, then $T$ extends to a bounded operator on $L^p(W)$  if and only if $T1 \in \BMOW$ and $T^* 1 \in \BMOWq$. \end{theorem}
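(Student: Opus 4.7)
The plan is to follow the classical paraproduct-subtraction strategy of the scalar $T1$ theorem, combined with the matrix weighted paraproduct boundedness result of \cite{IKP}. For necessity, assuming $T$ is bounded on $L^p(W)$, I would verify the defining condition of $\BMOW$ on a cube $I$ by splitting $1 = 1_{2I} + 1_{(2I)^c}$: the local piece $T(1_{2I})$ is handled by testing against an $L^{p'}(W^{1-p'})$ function supported in $I$, using boundedness of $T$ together with the matrix A${}_p$ condition to produce the desired $V_I$-weighted mean oscillation bound, while the far-away piece $T(1_{(2I)^c})(x) - T(1_{(2I)^c})(x_I)$ is estimated directly via the H\"older kernel bound on $V_I(K(x,y)-K(x_I,y))V_I^{-1}$, which is tailor-made for exactly this reduction. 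The condition $T^*1 \in \BMOWq$ follows analogously using the second kernel regularity bound on $V_I'(K^*(y,x)-K^*(y,x'))(V_I')^{-1}$.

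For sufficiency, fix a dyadic grid $\D$ in $\Rd$ and form the paraproducts $\pi_{T1}$ and $\pi_{T^*1}$ as in \eqref{ParaprodDef}. By the main result of \cite{IKP}, the hypothesis $T1 \in \BMOW$ gives boundedness of $\pi_{T1}$ on $L^p(W)$, and the hypothesis $T^*1 \in \BMOWq$ gives boundedness of $\pi_{T^*1}$ on $L^{p'}(W^{1-p'})$, so $(\pi_{T^*1})^*$ is bounded on $L^p(W)$ by duality. Setting $\tilde T := T - \pi_{T1} - (\pi_{T^*1})^*$, this operator satisfies $\tilde T 1 = \tilde T^* 1 = 0$ in the appropriate Haar/BMO sense, and one checks that it inherits (possibly with enlarged constants) the matrix-weighted kernel and weak boundedness conditions defining a $(W,p)$-CZO. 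It thus remains to prove that $\tilde T$ is bounded on $L^p(W)$.

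For this ``perfect cancellation'' step I would work with the bilinear form $\ip{\tilde T \vec f}{\vec g}$ in the Haar basis $\{h_I^\varepsilon \vec e_k\}$. The combined vanishing $\tilde T 1 = 0 = \tilde T^* 1$, the standard kernel bounds on $V_I K V_I^{-1}$ and $V_I' K^* (V_I')^{-1}$, and the weighted weak boundedness property produce almost-diagonal estimates on the matrix entries $\ip{\tilde T h_J^\eta \vec e_j}{h_I^\varepsilon \vec e_i}$ after conjugation by the reducing operator adapted to the larger of $I,J$. Summing these against matrix-weighted square function/Carleson estimates would give the desired bound on $L^p(W)$. Alternatively, one may invoke Hyt\"onen's dyadic representation theorem to realize $\tilde T$ as an average over random grids of Haar shifts of bounded complexity and apply known matrix-weighted bounds for such shifts.

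The main obstacle is precisely this final step. In the scalar setting almost-orthogonality once $T1 = T^*1 = 0$ is by now routine, but the matrix weight introduces significant non-commutativity: every estimate must be carried out relative to the reducing operators $V_I, V_I'$, and transitions between nested cubes $J \subset I$ require careful control on the ``change of reducer'' $\|V_I V_J^{-1}\|$, which is ultimately governed by the matrix A${}_p$ hypothesis. It is the careful orchestration of the weighted kernel hypotheses, the paraproduct theorem from \cite{IKP}, and the combinatorics of the $V_I$'s across scales that makes the matrix setting genuinely harder than the scalar one, and this is where the bulk of the technical work of the proof should go.
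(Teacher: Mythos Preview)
Your high-level architecture matches the paper's: paraproduct subtraction for sufficiency and a local/tail split for necessity. However, there are two places where your sketch either diverges from or underestimates the paper's argument.

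For sufficiency, you correctly isolate $\tilde T = T - \pi_{T1} - (\pi_{T^*1})^*$ and note that the real work is the almost-diagonal estimate in the Haar basis. What your outline does not contain is the mechanism by which those estimates are actually summed in $L^p(W)$. The paper does not use a straightforward square-function/Carleson summation, nor Hyt\"onen's representation theorem with off-the-shelf shift bounds. Instead it runs the good/bad cube ``surgery'' technique over random dyadic grids: the key Haar matrix bound (Lemma~\ref{TheHuntLem})
\begin{equation*}
\|V_{I_0}\,\tilde T_{I,J}\,V_{I_0}^{-1}\| \lesssim \frac{\ell(I)^{(d+\alpha)/2}\ell(J)^{(d+\alpha)/2}}{D(I,J)^{d+\alpha}}
\end{equation*}
is only proved when the smaller of $I,J$ is \emph{good}, and Lemma~\ref{HytonenLem} uses independence in the random grid to reduce to that case. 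The summation then proceeds by dominating the decay factor by a Poisson-type kernel, averaging over shifted lattices, and reducing everything to a block-averaging operator whose $L^p_{\ell^2}$ boundedness is Lemma~14.2 of \cite{NT}. Your alternative of invoking ``known matrix-weighted bounds for Haar shifts'' is not a substitute: no such result covers operators with genuinely matrix-valued kernels, and the weighted hypotheses here are conjugated by the common ancestor's reducing operator $V_{I_0}$, not by $V_I$ or $V_J$ separately, which is exactly why the shifted-lattice device is needed. Also, $\tilde T$ is \emph{not} itself a $(W,p)$-CZO in the kernel sense (paraproducts have no smooth kernel), so your inheritance claim is not quite right; the paper works entirely at the level of Haar coefficients.

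For necessity, your near/far split is what the paper does, but you pass over the main obstruction: the defining condition of $\BMOW$ carries the pointwise factor $W^{1/p}(x)$ (or $W^{-1/p}(x)$), not the reducing operator $V_I$. The paper handles this by first proving a matrix weighted John--Nirenberg lemma (Lemma~\ref{JNlemma}) showing that the symmetric condition
\begin{equation*}
\sup_I \frac{1}{|I|}\int_I \|V_I(B - m_I B)V_I^{-1}\|^q\,dx < \infty
\end{equation*}
for some $q>1$ already forces $B\in\BMOW$; it then verifies this symmetric condition with $q = p + \epsilon$, where the extra $\epsilon$ is produced by a matrix reverse H\"older inequality and is needed for the duality exponents to close. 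Your sketch gestures at ``the desired $V_I$-weighted mean oscillation bound'' but neither explains how to pass from it to the $W^{1/p}$-weighted condition nor mentions the reverse H\"older step; these are precisely the two ingredients the paper singles out as making necessity nontrivial in the matrix setting.
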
 \noindent Note that here we define $T1$ (and similarly define $T^* 1$) via its action on $H^1$ atoms as the matrix \begin{equation*} \ip{T1}{a_I} = \lim_{R \rightarrow \infty}  \ip{T 1_{[-R, R]^d}}{a_I}   \end{equation*} where the matrix $\ip{T 1_{[-R, R]^d}}{a_I}$ is defined by  \begin{equation*} \left(\ip{T 1_{[-R, R]^d}}{a_I}\right)_{ij} = \ip{T 1_{[-R, R]^d} \V{e}_j}{a_I \V{e}_i}  \end{equation*}  and $a_I$ is an atom with vanishing mean on $I$ and supported on $I$.    Now if $I$ and $Q$ are any cubes with $I \subseteq Q$ and as usual,  $Q^* = 2\sqrt{n} Q$ and $c_I$ is the center of $I$, then by elementary arguments we have that \begin{equation} \ip{T1}{a_I} = \ip{T 1_{Q^*}}{a_I} + \int_{\Rd \backslash Q^*} \left(\int_I [K(x, y) - K(c_I, y)] a_I(x) \, dx \right)\, dy  \label{T1IntDef} \end{equation} which by the (usual) cancellation condition on $K$ exists.

 Since $T$ is not necessarily bounded on $L^2$, we need to carefully define what we mean by $T1 \in \BMOW$.  To do this we first need to mention the Triebel-Lizorkin bounds from \cite{NT, V} when $d = 1$ and \cite{I} when $d > 1$.  In particular,   if $\V{f}_I ^\varepsilon$ is the vector of Haar coefficients of the scalar entries of $\V{f}$ and $\V{f} \in L^p(W)$ then     \begin{equation}   \|\vec{f}\|_{L^p(W)} ^p  \approx \inrd \left(\sum_{\varepsilon \in \S} \sum_{I \in \D} \frac{| V_I \vec{f}_I ^\varepsilon |^2}{|I|} 1_I(x) \right)^\frac{p}{2} \, dx.  \label{LpEmbedding}  \end{equation}  We then say that $T1 \in \BMOW$ if  \begin{equation} \label{T1inBMOW} \sup_{\substack{I \subseteq \Rd \\ I \textit{ is a cube }}}  \, \frac{1}{|I|} \int_I \left(\sum_{\varepsilon \in \S} \sum_{Q \subseteq I}  \frac{\|V_Q (T1)_I ^\varepsilon V_I^{-1} \|^2}{|Q|} 1_Q (x) \right)^\frac{p}{2} \, dx < \end{equation} where we use the obvious notation $(T1)_I ^\varepsilon = \ip{T1}{h_I ^\varepsilon}$.  However, thanks to \eqref{LpEmbedding} and Proposition \ref{IsralProp} (which will be stated later in the introduction), \eqref{T1inBMOW} says that
 \begin{equation*} T1 = \sum_{\varepsilon \in \S} \sum_{I \in \D} (T1)_I ^\varepsilon  h_I ^\varepsilon \end{equation*} is well defined for every dyadic grid $\D$ and satisfies $T1 \in \BMOW$ (and that a similar statement can be said of $T^* 1$.)

 Let us make the following interesting remark regarding necessity in Theorem \ref{T1Thm}.  Notice that under the assumptions of Theorem \ref{T1Thm} we have that $TB$ (again via its action on atoms) makes sense if $B \in L^\infty$ is an $n \times n$ matrix function.  Furthermore, a careful check of the proof of necessity in Theorem \ref{T1Thm} reveals that $TB \in \BMOW$ if \begin{equation*} \sup_{\substack{I \subseteq \Rd \\ I \textit{ is a cube }}} \|V_I B V_I ^{-1}\|_{L^\infty} < \infty. \end{equation*}

Let us now make some comments regarding the $(W, p)$-CZO conditions. First, by the matrix A${}_p$ condition we have the crucial symmetry that $T$ is a $(W, p)$-CZO if and only if $T^*$ is a $(W^{1 - p'}, p')$-CZO.   Second, note our kernel conditions are in fact very natural.  In particular, consider the simplest matrix kernelled CZO on $\mathbb{R}$, namely $\H M_A$  where $\H$ is the Hilbert transform and $M_A \vec{f} = A \vec{f}$ for a constant matrix $A \in \Mn$, which obviously has matrix kernel $(x - y)^{-1} A$.  Then $T = \H M_A$ is bounded on $L^p(W)$ if and only if $(x - y)^{-1} A$ satisfies the above kernel estimates with $\alpha = 1$. Obviously since in this case $T1 = T^*1 = 0$, sufficiency will follow from Theorem \ref{T1Thm}.   To see necessity, note that as $\H$ is bounded on $L^p(W)$ (see \cite{NT, V}), we have that $\H M_A$ is bounded if and only if $M_A$ is bounded on $L^p(W)$, and clearly (again via appropriate testing functions) a necessary condition for $M_A$ to be bounded on $L^p(W)$ is that \begin{equation} \label{Acond} \sup_{\substack{I \subseteq \R \\ I \text{ is a interval }}}\|V_I A V_I ^{-1}\| < \infty \end{equation}  which clearly is true if and only if $(x - y)^{-1} A$ satisfies the above kernel estimates with $\alpha = 1$ (and a similar statement can be made regarding the weak boundedness property). Obviously since \begin{equation*} \sum_{j = 1}^d R_j ^2 = - \text{Id} \end{equation*} where $R_j$ is the $j^{\text{th}}$ Riesz transform, similar statements can be made of the matrix kernelled CZOs $T_j = R_j A$.

Also, despite its restricted appearance, the definition of a $(W, p)$-CZO is indeed checkable.  In particular, since \begin{equation*} \|A\| \approx \left(\sum_{j = 1}^n |A\V{e}_j|^p \right)^\frac{1}{p} \approx \left(\sum_{j = 1}^n |A\V{e}_j|^{p'} \right)^\frac{1}{p'} \end{equation*} for any $n \times n$ matrix $A$, we have when $W$ is a matrix A${}_p$ weight that \begin{equation*} \|V_I K(x, y) V_I ^{-1} \| \approx  \left( \frac{1}{|I|} \int_I \left[ \frac{1}{|I|} \int_I \|W^{\frac{1}{p}}(s) K(x, y)  W^{- \frac{1}{p}} (t) \|^{p'} \, dt \right]^\frac{p}{p'} \, ds\right)^\frac{1}{p}  \end{equation*} and obviously we can estimate the other ``standard kernel condition" similarly.  Moreover, if $T _J $ is the matrix defined by $(T _J)_{ij} =  \ip{  T  (1_J \V{e}_j)}{ 1_J \V{e}_i} $  then (thanks to elementary linear algebra) we can rewrite the weak boundedness property as \begin{equation*}\sup_{\substack{J  \subseteq I \\  I, J \text{ are cubes}}}   \frac{1}{|J|}  \left(\| V_I T _J  V_I ^{-1}\| + \| V_I ' (T^*) _ J  (V_I') ^{-1} \| \right) < \infty  \end{equation*}  which as before says that the weak boundedness property is readily  checkable.

Furthermore, it is very easy to see that if $T$ is a scalar kernelled CZO  and $A$ is a constant matrix, then the operator $T_A := T M_A $  satisfies the $(W, p)$-CZO conditions if \begin{equation*} \sup_{\substack {I \subseteq \Rd \\ I \text{ is a cube}}} ( \|V_I A V_I ^{-1}\|  + \|V_I ' A^* (V_I ')^{-1}\|) < \infty.   \end{equation*}  Thus, we have the following immediate corollary to Theorem \ref{T1Thm}, which is clearly of interest itself.

 \begin{corollary} \label{T1SpecialCor} Let  $W$ be a matrix A${}_p$ weight, let  $T$ be a scalar kernelled CZO,  and let $A$ be a constant $n \times n$ matrix.  If $W$ and $A$ satisfy the condition \begin{equation*} \sup_{\substack {I \subseteq \Rd \\ I \text{ is a cube}}} ( \|V_I A V_I ^{-1}\|  + \|V_I ' A^* (V_I ')^{-1}\|) < \infty   \end{equation*} then $T_A : L^p(W) \rightarrow L^p(W)$ (where $T_A$ is defined as above) iff $T_A 1 \in \BMOW$ and $(T^*)_A  1 \in \BMOWq$. \end{corollary}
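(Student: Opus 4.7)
The plan is to reduce the corollary to a direct application of Theorem \ref{T1Thm} by verifying that $T_A = T M_A$ satisfies all of the $(W, p)$-CZO hypotheses whenever the constant matrix $A$ and the weight $W$ satisfy the stated uniform bound on $V_I A V_I^{-1}$ and $V_I' A^* (V_I')^{-1}$. Since $T$ is scalar kernelled with scalar kernel $k$, the matrix kernel of $T_A$ is simply $K_A(x, y) = k(x, y) A$, so that
\begin{equation*} \|V_I K_A(x, y) V_I^{-1}\| = |k(x, y)|\, \|V_I A V_I^{-1}\|, \end{equation*}
and an identical factorization holds for the difference $K_A(x, y) - K_A(x', y)$. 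Taking the supremum in $I$ and invoking the scalar standard kernel estimates for $k$ together with the hypothesis on $A$ immediately yields the required matrix size and smoothness conditions; the dual kernel $(K_A)^*(y, x) = \overline{k(x, y)} A^*$ is handled identically using the hypothesis on $A^*$ and $V_I'$.

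For the weak boundedness property, I would exploit that since $T$ acts diagonally on $\C$-valued functions, one has the identity
\begin{equation*} V_I T_A V_I^{-1}(1_J \V{e}_j) = (V_I A V_I^{-1} \V{e}_j)\, T 1_J, \end{equation*}
from which
\begin{equation*} \left|\ip{V_I T_A V_I^{-1}(1_J \V{e}_j)}{1_J \V{e}_i}\right| \leq \|V_I A V_I^{-1}\| \cdot |\ip{T 1_J}{1_J}| \lesssim \|V_I A V_I^{-1}\|\, |J| \end{equation*}
by the standard scalar weak boundedness property of $T$; the adjoint estimate follows by the symmetric computation applied to $T_A^* = M_{A^*} T^*$. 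Bilinearity of $T_A$ on $\MC{S}$ and its $L^2(\Rd; \C)$-boundedness are immediate from the corresponding scalar facts together with $\|A\| < \infty$.

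With the $(W, p)$-CZO hypotheses for $T_A$ now in hand, the corollary follows at once from Theorem \ref{T1Thm} applied to $T_A$, the testing conditions $T_A 1 \in \BMOW$ and $(T_A)^* 1 \in \BMOWq$ being exactly those supplied in the hypothesis. I do not anticipate any real obstacle here: the proof is essentially bookkeeping, factoring $A$ out of the scalar kernel at each step and applying the uniform bound on $V_I A V_I^{-1}$ and $V_I' A^* (V_I')^{-1}$ at the appropriate moment.
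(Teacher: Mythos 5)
Your proposal is correct and follows exactly the route the paper takes: the paper simply asserts (without detailed proof) that under the stated hypothesis $T_A = T M_A$ satisfies the $(W,p)$-CZO conditions, which makes the corollary an immediate consequence of Theorem \ref{T1Thm}. Your factorization of $A$ out of the scalar kernel $k(x,y)$ and out of the weak boundedness inner products supplies precisely the verification the paper leaves to the reader.
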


We should comment, however, that it seems unclear whether our ``weak boundedness property" is genuinely a weak boundedness property that is satisfied whenever $T$ is bounded on $L^p(W)$.  It would therefore be quite interesting to know whether one can replace our ``weak boundedness property" with a more local condition that is satisfied whenever $T$ is bounded on $L^p(W)$

Interestingly, the above considerations give a simple counterexample to Theorem \ref{T1Thm} when $T$ is \textit{not} a $(W, p)$-CZO.  Namely if $A$ is the $2 \times 2$ matrix given by $A_{ij} = 1 - \delta_{ij}$, then in fact $M_A  L^2(W) \nsubseteq L^2(W)$ when $W = \text{diag}(|x|^\beta, \ |x|^{-\beta})$ for $\frac{1}{2} < \beta < 1$ since \begin{equation*} \|W^\frac12 A W^{-\frac12} (1, 0)^T 1_{[0, 1]} \|_{L^2} ^2 = \int_0^1 x^{-2\beta} \, dx = + \infty \end{equation*} so that $T =\H M_A$ fails (rather miserably) to even map $L^2(W)$ into itself.


Let us now comment on the organization of the paper.  In the next section we will prove sufficiency in Theorem \ref{T1Thm}.  The proof strategy  will be to employ the by now standard technique of ``surgery" from non-homogenous analysis \footnotemark \footnotetext{Contrary to what is stated in the literature, it should be noted that that this technique was first used in \cite{NT} to prove matrix weighted inequalities for certain scalar CZOs} in a way that allows us to modify and combine the arguments in \cite{H} and \cite{NT}.

In the third section we will prove necessity in Theorem \ref{T1Thm}.  Note that the presence of $W^\frac{1}{p}$ and $W^{-\frac{1}{p}}$ in the definition of $\BMOW$ makes this a harder task than it normally would be in the unweighted setting.  To mitigate this, we will need an admittedly strange looking kind of ``matrix weighted John-Nirenberg" theorem (Lemma \ref{JNlemma}) which roughly says that a matrix function $B$ will be in $\BMOW$ if it satisfies a similar weighted BMO like condition with the weight replaced by the reducing operators $V_I$.  The rest of the proof of necessity will then follow from a modification of the classical techniques used to prove that $T : L^\infty \rightarrow \text{BMO}$ for any CZO $T$.

In the last section, we will use some ideas from the proof of necessity to prove the following John-Nirenberg type result that complements the classical weighted John-Nirenberg theorem from \cite{MW}.

\begin{theorem}  \label{MWMatrixThm}  Let $W$ be a matrix A${}_p$ weight.  If there exists $q > 1$ such that \begin{equation}  \label{VectorBMO1} \sup_{\substack{J \subset \R^d \\  J \text{ is a cube}}}  \frac{1}{|J|} \int_J |{V_J} ^{-1}  (\V{f} (x) - m_J \V{f})|^q \, dx < \infty \end{equation} then \begin{equation}  \label{VectorBMO2} \sup_{\substack{J \subset \R^d \\  J \text{ is a cube}}}  \frac{1}{|J|} \int_J |W^{-\frac{1}{p}}(x)   (\V{f} (x) - m_J \V{f})|^{p'} \, dx < \infty. \end{equation} Conversely, if (\ref{VectorBMO2}) holds then so does (\ref{VectorBMO1}) for $q = p$. \end{theorem}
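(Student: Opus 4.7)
The proof rests on a basic equivalence: under the matrix A${}_p$ hypothesis, $|V_J^{-1}\vec{e}| \approx |V_J'\vec{e}|$ uniformly in $\vec{e}\in\C$ and in cubes $J$. The inequality $|V_J^{-1}\vec{e}| \lesssim |V_J'\vec{e}|$ requires no A${}_p$ assumption: from the pointwise identity $\langle\vec{u},\vec{v}\rangle = \langle W^{1/p}(x)\vec{u}, W^{-1/p}(x)\vec{v}\rangle$, Cauchy--Schwarz, and H\"older in the $L^p$--$L^{p'}$ scale over $J$, one gets $|\langle\vec{u},\vec{v}\rangle| \lesssim |V_J\vec{u}|\cdot|V_J'\vec{v}|$, and testing against $\vec{u} = V_J^{-1}\vec{w}$ yields $|V_J^{-1}\vec{v}| \lesssim |V_J'\vec{v}|$. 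Conversely, matrix A${}_p$ gives $\|V_J V_J'\| \lesssim 1$, hence $(V_J')^2 \leq C^2 V_J^{-2}$ as operators, and by operator monotonicity of the square root, $V_J' \leq C V_J^{-1}$. In particular, (\ref{VectorBMO1}) is equivalent to the same statement with $V_J^{-1}$ replaced by $V_J'$.

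For the forward direction, I first self-improve the integrability. The techniques of Lemma \ref{JNlemma} (a Calder\'on--Zygmund stopping-time decomposition on the scalar function $x \mapsto |V_J^{-1}(\vec{f}(x)-m_J\vec{f})|$, using that $V_J$ is constant on $J$ and that the A${}_p$ condition gives uniform comparability of reducing operators on nested cubes) upgrade the hypothesis so that
\[
\sup_{J}\,\frac{1}{|J|}\int_J |V_J'(\vec{f}(x)-m_J\vec{f})|^r\,dx < \infty \qquad \text{for every } r < \infty.
\]
Then I split $W^{-1/p}(\vec{f}-m_J\vec{f}) = [W^{-1/p}(V_J')^{-1}] \cdot V_J'(\vec{f}-m_J\vec{f})$ and apply H\"older with exponents $1+\epsilon$ and $(1+\epsilon)/\epsilon$:
\[
\frac{1}{|J|}\int_J |W^{-1/p}(\vec{f}-m_J\vec{f})|^{p'}\,dx \leq \left(\frac{1}{|J|}\int_J \|W^{-1/p}(V_J')^{-1}\|^{p'(1+\epsilon)}\,dx\right)^{\!\!\frac{1}{1+\epsilon}} \left(\frac{1}{|J|}\int_J |V_J'(\vec{f}-m_J\vec{f})|^{\frac{p'(1+\epsilon)}{\epsilon}}\,dx\right)^{\!\!\frac{\epsilon}{1+\epsilon}}.
\]
The first factor is bounded uniformly in $J$: the base-exponent average $\frac{1}{|J|}\int_J\|W^{-1/p}(V_J')^{-1}\|^{p'}\,dx \approx \|V_J'(V_J')^{-1}\|^{p'} = 1$, and the matrix reverse H\"older inequality for the A${}_p$ weight $W^{-1/p}$ bumps this up to exponent $p'(1+\epsilon)$ for a sufficiently small $\epsilon > 0$ depending on the A${}_p$ characteristic. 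The second factor is finite by the self-improved (\ref{VectorBMO1}), so (\ref{VectorBMO2}) follows.

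The converse direction with $q=p$ is symmetric: first apply Lemma \ref{JNlemma} to upgrade (\ref{VectorBMO2}) to hold with $p'$ replaced by any $s<\infty$; then factor $V_J^{-1}(\vec{f}-m_J\vec{f}) = [V_J^{-1}W^{1/p}]\cdot W^{-1/p}(\vec{f}-m_J\vec{f})$ and apply H\"older with exponents $1+\delta$ and $(1+\delta)/\delta$. The resulting matrix factor $\frac{1}{|J|}\int_J \|V_J^{-1}W^{1/p}\|^{p(1+\delta)}\,dx$ is bounded via the matrix reverse H\"older inequality for $W^{1/p}$ (whose base-exponent $p$ average equals $\|V_J V_J^{-1}\|^p = 1$ up to constants), and the remaining function factor is controlled by the self-improved (\ref{VectorBMO2}). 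The principal obstacle throughout is the matrix John--Nirenberg self-improvement step, since $V_J$ varies with $J$ and standard John--Nirenberg does not apply directly; this is precisely the content of (and is handled by the stopping-time argument in) Lemma \ref{JNlemma}, adapted here from matrix-valued $B$ to vector-valued $\vec{f}$.
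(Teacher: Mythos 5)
Your proposal contains a genuine gap: the self-improvement step, which claims that the hypothesis \eqref{VectorBMO1} (with one fixed $q>1$) upgrades to
\[
\sup_{J}\,\frac{1}{|J|}\int_J |V_J'(\vec{f}(x)-m_J\vec{f})|^r\,dx < \infty \qquad \text{for every } r < \infty,
\]
is not justified by Lemma \ref{JNlemma} nor by anything else in the paper, and it is almost certainly false. The obstruction is structural: in \eqref{VectorBMO1} the reducing operator $V_J'$ of the \emph{outer} cube $J$ sits against $\vec{f}-m_J\vec{f}$ globally on $J$. Running a Calder\'on--Zygmund stopping time inside $J$ and descending to sub-cubes $K$ only tells you that $|V_K^{-1}(\vec{f}-m_K\vec{f})|$ is controlled; transferring this back to $V_J^{-1}$ costs a factor $\|V_J^{-1}V_K\|$, which grows geometrically with the stopping-time generation of $K$, while the measure of the generation decays only like $2^{-j}|J|$. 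For a large exponent $r$ the growth wins. This is already visible in the scalar case: the Muckenhoupt--Wheeden theorem gives $\sup_J (m_J w)^{-1}|J|^{-1}\int_J|f-m_Jf|^p\,dx<\infty$ for $f\in\mathrm{BMO}$, $w\in\mathrm{A}_\infty$, but \emph{not} the analogous bound with $p$ replaced by a much larger exponent $r$, precisely because $m_J w$ can degenerate. What Lemma \ref{JNlemma} actually yields, once you pass through Littlewood--Paley and the stopping time, is a Carleson condition on the \emph{inner-cube} quantities $\{V_I^{-1}\vec{f}_I^\varepsilon\}$ --- and from the Carleson condition one upgrades the exponent only on the square function with the varying $V_I^{-1}$, never on the BMO-type condition with the frozen outer $V_J^{-1}$. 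Note also that even if you only needed to upgrade to $r=p$ (which \emph{is} provable, and is exactly the converse direction of the theorem), that would not be enough here: your H\"older split demands the second factor to carry exponent $p'(1+\epsilon)/\epsilon$, which exceeds $p$ for every admissibly small $\epsilon>0$.

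The paper's proof avoids the problem entirely by never facing a large exponent against $V_J'$. It converts \eqref{VectorBMO1} (with $q\le 2$) into a square-function condition in $V_J^{-1}$ via Littlewood--Paley, uses the stopping time to pass to the \emph{varying} inner-cube operators $V_I'$ (this is where $\|V_I'V_K\|\lesssim 1$ is legitimately used, uniformly, for $I\in\F(K)$), obtaining a uniform $L^q$ bound on the square function $\bigl(\sum_I |V_I'\vec{f}_I^\varepsilon|^2|I|^{-1}1_I\bigr)^{1/2}$. This is equivalent to a Carleson condition, and by the John--Nirenberg theorem for Carleson sequences (Theorem 3.1 of NTV) one may \emph{freely} upgrade the square-function exponent to $p'$; then the Triebel--Lizorkin bound \eqref{LpEmbedding} applied to $W^{1-p'}$ identifies that $L^{p'}$ square function exactly with the $L^{p'}(W^{1-p'})$-norm appearing in \eqref{VectorBMO2}. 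For the converse direction the paper uses \eqref{LpEmbedding}, NTV again, and the Carleson embedding lemma (Lemma \ref{EmbeddingLem}) with the specific choice $B = V_J^{-1}W^{1/p}1_J$ to land exactly on \eqref{VectorBMO1} with $q=p$; this is tied to $L^p$ and does not deliver arbitrary exponents. Your observation that $|V_J^{-1}\vec{e}|\approx|V_J'\vec{e}|$ under $\mathrm{A}_p$, and your use of reverse H\"older on $\|W^{-1/p}(V_J')^{-1}\|$, are both correct, but the self-improvement you hang them on is the missing (and unavailable) ingredient.
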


Note that Theorem \ref{MWMatrixThm} should in fact be thought of as a special case of a two weighted result that was proved by the author after this paper was written.  In particular, suppose that we use the notation $V_J(W)$ (and similar notation elsewhere) to indicate that we are taking the reducing operator with respect to the matrix weight $W$.  Then it was proved in \cite{I1} (using in fact arguments from this paper) that the following quantities are equivalent for $q > 1$ with $q - 1$ small enough if $W, U$ are matrix A${}_p$ weights and $B$ is a locally integral $n \times n$ matrix function:
\begin{itemize}{}{}
\item [(a)] $\displaystyle \sup_{\substack{J \subseteq \Rd \\ J \text{ is a cube}} } \frac{1}{|J|} \int_J \|V_J(W)^{-1}  (B(x) - m_J B) V_J(U) \|^{q} \, dx $
\item [(b)] $\displaystyle \sup_{\substack{J \subseteq \Rd \\ J \text{ is a cube}} }   \frac{1}{|J|} \int_J \|W^{-\frac{1}{p}} (x) (B (x) - m_J B) (V_J ' (U))^{-1}  \|^{p'} \, dx $
\item[(c)] $\displaystyle  \sup_{\substack{J \subseteq \Rd \\ J \text{ is a cube}} }   \frac{1}{|J|} \int_J \| V_J(W) ^{-1} (B(x) - m_J B) U^\frac{1}{p} (x) \|^p \, dx. $
\end{itemize}
\noindent Note that this result clearly implies  Theorem \ref{MWMatrixThm} by setting $U = \text{Id}_{n \times n}$ (and also implies Lemma \ref{JNlemma} by setting $U = W$) and further note that these equivalences are natural and vital when trying to characterize the two matrix weighted boundedness of paraproducts and commutators with Riesz transforms (see \cite{I1}).

Unfortunately it appears to be rather mysterious as to how to recover a genuine matrix weighted version of the classical weighted John-Nirenberg theorem from \cite{MW} for either vector or matrix functions.  An exception to this is in the ``matrix weighted/matrix function" $p = 2$ and $U = W^{-1}$ setting.  In particular, the equivalence between $(a)$ and $(b)$ reads (after using the matrix A${}_2$ condition) that the quantity\begin{equation} \label{MatrixBMO1} \sup_{\substack{J \subseteq \Rd \\ J \text{is a cube}} } \frac{1}{|J|} \int_J \|(m_J W)^{-\frac12}  (B(x) - m_J B) (m_J W)^{-\frac12} \|^{q} \, dx   \end{equation} is equivalent to the quantity \begin{equation} \label{MatrixBMO2} \sup_{\substack{J \subseteq \Rd \\ J \text{is a cube}} }   \frac{1}{|J|} \int_J \|W^{-\frac{1}{2}} (x) (B^* (x) - m_J B^*) (m_J W)^{-\frac12}  \|^{2} \, dx   \end{equation} which (modulo the $q > 1$ ) recovers the classical weighted John-Nirenberg theorem from \cite{MW}.

 On the other hand, note that the equivalences between $(a), (b),$ and $(c)$ are in fact not new in the scalar setting and were proved rather recently in \cite{HLW} (for $q = 1$) and was used in \cite{HLW} to characterize the two scalar weighted boundedness of paraproducts and commutators with general CZOs.  Further, note that the proof in \cite{HLW} largely relies on the scalar weighted John Nirenberg theorem from \cite{MW} in conjunction with the fact that if $w, u \in \text{A}_p$ are scalar weights and $\nu = w^\frac{1}{p} u^{-\frac{1}{p}}$ then $\nu \in \text{A}_2$ and \begin{equation*} (m_J w )^{-\frac{1}{p}} (m_J  u )^{\frac{1}{p' }} \approx (m_J  \nu)^{-1}. \end{equation*} Moreover,  $(a)$ clearly reduces to the ordinary BMO condition in the one scalar weighted case, and thus conditions $(b)$ and $(c)$ are not needed to prove one scalar weighted norm inequalities for paraproducts and commutators (which is most likely why conditions like $(b)$ and $(c)$ in the one scalar weighted case have not appeared in the literature before \cite{HLW}.)

 Now checking the proof carefully, it is easy to see that Theorem \ref{MWMatrixThm}  extends word for word to the case when a $\C$ valued function $\vec{f}$ is replaced by an $\Mn$ valued function $B$, and this leads to a very natural result when viewed from the point of view of commutators (at least for $q = p$).  In particular, in the last section we will show the following result (which follows from some simple ideas in \cite{IKP})  \begin{proposition}  \label{CommProp} Let $K : \mathbb{R}^d \backslash \{0\} \rightarrow \mathbb{C}$  be not identically zero, be homogenous of degree $-d$, have mean zero over the unit sphere $\partial \mathbb{B}_d$, and satisfy $K \in C^{\infty} (\partial \mathbb{B}_d)$ (so in particular $K$ could be any of the Riesz kernels).  If $T$ is the (convolution) CZO associated to $K$,  $B$ is a locally integral $\Mn$ valued function, and $W$ is a matrix A${}_p$ weight then $[T, B^*] : L^p(W) \rightarrow L^p$ boundedly (directly) implies that \textit{both}  \eqref{VectorBMO1} and \eqref{VectorBMO2} are true (for $q = p$, and $\vec{f}$ replaced by $B$).   \end{proposition}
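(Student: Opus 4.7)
The plan is to run a matrix-weighted version of the Coifman--Rochberg--Weiss testing argument, and to obtain \eqref{VectorBMO2} from the dual of the hypothesis. I first set up the standard geometric pair $(J,\W{J})$: since $K \in C^{\infty}(\partial\mathbb{B}_d)$ is not identically zero, fix $\theta_0 \in \partial\mathbb{B}_d$ with $K(\theta_0) \neq 0$ and a neighborhood $U \ni \theta_0$ on which $|K-K(\theta_0)| < \tfrac{1}{2}|K(\theta_0)|$. For every cube $J$ of side $\ell$, let $\W{J}$ be the cube of side $\ell$ centered at $c_J - N\ell\theta_0$, with $N=N(K)$ chosen so that $(x-y)/|x-y| \in U$ whenever $x \in J$ and $y \in \W{J}$. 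Homogeneity of degree $-d$ then yields $K(x-y) \approx \kappa_0/|J|$ uniformly on $J \times \W{J}$ for a fixed nonzero constant $\kappa_0$. Moreover $J \cup \W{J}$ lies inside a single cube of volume $\lesssim |J|$, so matrix $A_p$ doubling of reducing operators gives $V_{\W{J}} \approx V_J$ and $V_{\W{J}}' \approx V_J'$.

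For the matrix version of \eqref{VectorBMO1}, I would test $[T,B^*]\colon L^p(W) \to L^p$ against $\V{f} = 1_{\W{J}}\V{u}$ for arbitrary $\V{u} \in \C$. The identity $[T,B^*]\V{f}(x) = \inrd K(x-y)(B^*(y)-B^*(x))\V{f}(y)\,dy$ combined with the setup gives $[T,B^*]\V{f}(x) \approx \kappa_0(m_{\W{J}}B^* - B^*(x))\V{u}$ for $x \in J$, while $\|\V{f}\|_{L^p(W)} \approx |J|^{1/p}|V_J \V{u}|$. The boundedness hypothesis then yields
\[
\frac{1}{|J|} \int_J |(B^*(x) - m_{\W{J}}B^*)\V{u}|^p\,dx \lesssim |V_J \V{u}|^p,
\]
and substituting $\V{u} = V_J^{-1}\V{e}_j$ and summing in $j$ via $\|A\|^p \approx \sum_j |A\V{e}_j|^p$ produces
\[
\frac{1}{|J|} \int_J \|V_J^{-1}(B(x) - m_{\W{J}}B)\|^p\,dx \lesssim 1.
\]
A triangle inequality plus Jensen's inequality applied to $V_J^{-1}(m_J B - m_{\W{J}}B) = \frac{1}{|J|}\int_J V_J^{-1}(B(z) - m_{\W{J}}B)\,dz$ replaces $m_{\W{J}} B$ by $m_J B$.

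For the matrix version of \eqref{VectorBMO2}, I would dualize: since $(L^p(W))^* = L^{p'}(W^{1-p'})$ under the usual pairing, the adjoint $[T,B^*]^* = -[T^*,B]\colon L^{p'} \to L^{p'}(W^{1-p'})$ is bounded, and the kernel of $T^*$ satisfies the same hypotheses as $K$. Testing on $\V{f} = 1_{\W{J}}\V{u} \in L^{p'}$ with $[T^*,B]\V{f}(x) \approx \kappa_0^*(m_{\W{J}}B - B(x))\V{u}$ for $x \in J$ gives
\[
\frac{1}{|J|}\int_J |W^{-\frac{1}{p}}(x)(B(x) - m_{\W{J}}B)\V{u}|^{p'}\,dx \lesssim |\V{u}|^{p'},
\]
and summing over $\V{u} = \V{e}_k$ yields the $m_{\W{J}}$-version of the matrix \eqref{VectorBMO2}. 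The main obstacle is replacing $m_{\W{J}}$ by $m_J$ on the dual side: the reducing-operator identity $\frac{1}{|J|}\int_J |W^{-\frac{1}{p}}(x)\V{w}|^{p'}\,dx \approx |V_J'\V{w}|^{p'}$, applied with the fixed vector $\V{w} = (m_J B - m_{\W{J}} B)\V{u}$, reduces the task to controlling $\|V_J'(m_J B - m_{\W{J}}B)\|$. The matrix $A_p$ bound $\|V_J V_J'\| \lesssim 1$, together with Hermiticity of $V_J$ and $V_J'$, yields $\|V_J' V_J\| \lesssim 1$ and hence $\|V_J' M\| \leq \|V_J' V_J\|\,\|V_J^{-1}M\| \lesssim \|V_J^{-1}M\|$ for every matrix $M$. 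Applied with $M = m_J B - m_{\W{J}}B$, this inherits the bound from the \eqref{VectorBMO1} step and closes the argument.
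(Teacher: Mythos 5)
Your overall strategy follows the same Coifman--Rochberg--Weiss testing scheme as the paper, and the bookkeeping with reducing operators, the duality step, and the $m_{\W{J}}\to m_J$ replacement are all sound. The gap is in the central pointwise step: from the uniform comparison $K(x-y)\approx \kappa_0/|J|$ on $J\times\W{J}$ you cannot conclude that $[T,B^*]\V{f}(x)\approx\kappa_0\bigl(m_{\W{J}}B^*-B^*(x)\bigr)\V{u}$. Writing $K(x-y)=\kappa_0/|J|+E(x,y)$ with $|E|\leq\varepsilon|\kappa_0|/|J|$, the resulting error $\int_{\W{J}}E(x,y)(B^*(y)-B^*(x))\V{u}\,dy$ is controlled only by $\varepsilon|\kappa_0|\cdot m_{\W{J}}|(B^*(\cdot)-B^*(x))\V{u}|$, and since $B$ is merely locally integrable this quantity is not a priori finite, let alone absorbable into the main term. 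Iterating to the pairs $(\W{J},\W{\W{J}}),\ldots$ is circular because it presupposes uniform finiteness of exactly the quantity being estimated, and you cannot take $\varepsilon\to 0$ by pushing $\W{J}$ far from $J$ without wrecking the comparability $V_{\W{J}}\approx V_J$ (the constants in that comparability degrade with the separation).

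The paper avoids the error term altogether by the classical CRW device: since $K$ is smooth and nonvanishing near a fixed point $z_1$, one expands $1/K$ in an absolutely convergent Fourier series there, and the homogeneity identity $r^d K(x-y)/K\bigl(\tfrac{x-y}{r}\bigr)\equiv 1$ turns $\chi_Q(x)\,\|(B^*(x)-m_{Q'}B^*)V_Q^{-1}\|$ into an \emph{exact} absolutely convergent sum of genuine commutators $[T,B^*]$ evaluated on modulated test functions $e^{i(\delta/r)v_k\cdot y}V_Q^{-1}\chi_{Q'}(y)\V{e}_j$. There is then nothing to absorb: one integrates the resulting pointwise bound in $x$ over $Q$ and invokes the hypothesis directly. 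To repair your argument within its own spirit, that Fourier expansion of $1/K$ is the missing ingredient; the ``approximate constancy'' shortcut works only when $B$ is already known to be (locally) $L^p$, which is part of what the proposition is being asked to establish.
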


 Interestingly, Theorem  \ref{MWMatrixThm} (for $q = p$) in the scalar case, which says that
\begin{equation*}  \sup_{\substack{J \subset \R^d \\  J \text{ is a cube}}}  \frac{1}{w(J)} \int_J |  f (x) - m_J {f}|^p \, dx < \infty \end{equation*} if and only if \begin{equation*}  \sup_{\substack{J \subset \R^d \\  J \text{ is a cube}}}  \frac{1}{|J|} \int_J |f (x) - m_J f|^{p'} w^{1 -p'} (x) \, dx < \infty \end{equation*} is a special case of the two weight boundedness characterization of commutators with Riesz transforms in \cite{HLW} (which first appeared in \cite{B} in the special case of the Hilbert transform) when one of the weights is the constant $1$.

In fact, well after this paper was written, the author proved general two matrix weighted results in the preprint \cite{I1} (when both are matrix A${}_p$ weights) similar to the results in \cite{HLW} that extend and unify Theorem \ref{MWMatrixThm} and Proposition \ref{CommProp}.  While we will refer the interested reader to \cite{I1} for these results, we will only mention that the following equivalency to the conditions defining $\BMOW$ was proved in \cite{I1} \begin{proposition} \label{IsralProp} If $1 < p < \infty$ and $ W$  is a matrix A${}_p$ weights  then the following are equivalent:

\begin{itemize}{}{}

\item [(a')] $B \in \BMOW$ \\
\item[(b')]$\displaystyle \sup_{\substack{J \subset \R^d \\  J \text{ is a cube}}} \frac{1}{|J|} \int_J \|W^\frac{1}{p} (x) (B(x) - m_J B) (V_J) ^{-1}  \|^p \, dx < \infty $
\item[(c')] $\displaystyle \sup_{\substack{J \subset \R^d \\  J \text{ is a cube}}}  \frac{1}{|J|} \int_J \|W^{-\frac{1}{p}} (x) (B^* (x) - m_J B^*) (V_J ' )^{-1}  \|^{p'} \, dx < \infty.   $ \\

\end{itemize}
\end{proposition}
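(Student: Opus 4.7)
Since condition $(a')$ coincides with $(b')$ when $p \geq 2$ and with $(c')$ when $1 < p \leq 2$, the content of the proposition reduces to the equivalence $(b') \Longleftrightarrow (c')$.

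The central preliminary observation is the matrix A${}_p$ equivalence
\begin{equation*}
|V_J^{-1} \vec{e}| \approx |V_J' \vec{e}| \qquad \text{for every } \vec{e} \in \C \text{ and every cube } J.
\end{equation*}
Indeed, the H\"older-type duality
\begin{equation*}
|\langle \vec{a}, \vec{b}\rangle| = \left|\frac{1}{|J|}\int_J \langle W^{\frac{1}{p}}(x)\vec{a}, W^{-\frac{1}{p}}(x)\vec{b}\rangle\,dx\right| \lesssim |V_J \vec{a}|\,|V_J' \vec{b}|,
\end{equation*}
specialized to $\vec{a} = V_J^{-1}\vec{f}$ with $\vec{b} = \vec{e}$ and followed by the supremum over unit $\vec{f}$, produces $|V_J^{-1}\vec{e}| \lesssim |V_J'\vec{e}|$ without invoking A${}_p$. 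The reverse inequality is precisely the matrix A${}_p$ bound $\|V_J V_J'\| \leq C$, after observing that $V_J(V_J')^2 V_J = (V_J V_J')(V_J V_J')^*$ has operator norm at most $C^2$. It follows that $\|A V_J^{-1}\| \approx \|A V_J'\|$ (and the analogous statement on the left) for every matrix $A$.

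Combining this with the adjoint-invariance of the matrix norm, $(c')$ is equivalent to the condition, which we label $(c'')$,
\begin{equation*}
\sup_J \frac{1}{|J|}\int_J \|V_J(B(x) - m_J B) W^{-\frac{1}{p}}(x)\|^{p'}\,dx < \infty.
\end{equation*}
The plan for showing $(b') \Longleftrightarrow (c'')$ is to route both conditions through the common reducing-operator BMO condition
\begin{equation*}
\sup_J \frac{1}{|J|} \int_J \|V_J(B(x) - m_J B) V_J^{-1}\|^q\,dx < \infty,
\end{equation*}
whose truth (by matrix John--Nirenberg) is independent of the exponent $q > 1$. Lemma \ref{JNlemma} applied to $W$ at exponent $p$ identifies $(b')$ with this condition at $q = p$, while Lemma \ref{JNlemma} applied to the matrix A${}_{p'}$ weight $W^{1-p'}$ at exponent $p'$ (whose reducing operators are $V_J'$ and $V_J$, respectively) identifies $(c'')$ with the condition at $q = p'$, after one more invocation of $V_J' \approx V_J^{-1}$ and the adjoint-invariance of the matrix norm.

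The main obstacle is the exponent self-improvement from $q = p$ to $q = p'$ for the reducing-operator BMO condition; this is the substantive ``strange looking'' content of Lemma \ref{JNlemma}, and it is the step that genuinely uses the noncommutative matrix A${}_p$ structure beyond Hölder's inequality. Once this self-improvement is in hand, the chain $(b') \Longleftrightarrow \text{reducing-operator BMO} \Longleftrightarrow (c'') \Longleftrightarrow (c')$ closes the proof.
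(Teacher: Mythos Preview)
First, note that the paper does not itself supply a proof of this proposition; it is stated and attributed to \cite{I1}. So there is no in-paper argument to compare against, and your task is really to produce a self-contained proof from the tools available here.

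Your reduction to $(b')\Longleftrightarrow(c')$ and the equivalence $|V_J^{-1}\vec e\,|\approx|V_J'\vec e\,|$ (hence $\|A(V_J')^{-1}\|\approx\|AV_J\|$, etc.) are correct and useful. The gap is in the appeal to Lemma~\ref{JNlemma}. That lemma is a one-sided inclusion
\[
\BMOWpq\ \subseteq\ \BMOW,
\]
not an equivalence; it does \emph{not} ``identify'' $(b')$ (or $(c'')$) with the reducing-operator BMO condition. Concretely, you use two unproven directions: (i) that $(b')$ (resp.\ $(c')$) implies the reducing-operator condition $\sup_J\frac{1}{|J|}\int_J\|V_J(B-m_JB)V_J^{-1}\|^q\,dx<\infty$ for some $q$, and (ii) that this condition is independent of $q>1$. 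Neither follows from Lemma~\ref{JNlemma}: in the proof of that lemma the stopping-time argument passes from the square function with $\|V_I B_J V_I^{-1}\|$ (outer cube) to that with $\|V_J B_J V_J^{-1}\|$ (inner cube), using $\|V_JV_K^{-1}\|\lesssim1$ and $\|V_KV_J^{-1}\|\lesssim1$ for $J\in\F(K)$; the reverse passage would require control of $\|V_IV_J^{-1}\|\cdot\|V_JV_I^{-1}\|$ for arbitrary $J\subseteq I$, which is not available. There is also a smaller bookkeeping issue: the reducing-operator BMO condition attached to $W^{1-p'}$ is
\[
\sup_J\frac{1}{|J|}\int_J\|V_J'(B-m_JB)(V_J')^{-1}\|^q\,dx,
\]
which, after the $V_J'\approx V_J^{-1}$ and adjoint manipulations, becomes the reducing-operator condition for $W$ applied to $B^\ast$, not to $B$; you must track this carefully so that your ``common'' intermediate condition really is common.

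To close the argument you need a genuine converse: from $(b')$ (or $(c')$) deduce the reducing-operator BMO condition. The paper contains exactly this kind of step in the converse half of Theorem~\ref{MWMatrixThm} via Lemma~\ref{EmbeddingLem} (the matrix Carleson embedding), and the discussion around conditions $(a)$--$(c)$ indicates that the analogous two-sided equivalence with $U=W$ (which specializes to the present proposition) is carried out in \cite{I1}. Either adapt the embedding/Carleson argument to the $V_J^{-1}$-on-the-right situation, or cite \cite{I1}; invoking Lemma~\ref{JNlemma} alone is not enough.
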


Note that we have in fact proven that (modulo Theorem \ref{T1Thm}) if $A$ is a constant matrix, then we have $\vec{f} \mapsto A \vec{f}$ is bounded on $L^p(W)$ when $W$ is a matrix A${}_p$ weight if and only if \eqref{Acond} (defined with respect to cubes) is true. As was mentioned earlier, \eqref{Acond} is a necessary condition easily obtained via testing functions.  On the other hand, it is easy to prove sufficiency without recourse to the Riesz transforms.  In particular, clearly (for $W$ not necessarily A${}_p$ and $A$ not necessarily constant)  $\vec{f} \mapsto A \vec{f}$ is bounded on $L^p(W)$ if and only if $W^{\frac{1}{p}} A W^{-\frac{1}{p}} \in L^\infty$.   However, if $u \in \Rd$ and $\{I_{k} ^{u}\}$ is a nested sequence of cubes whose intersection is $\{u\}$ and $1 < p \leq 2$ then the matrix A${}_p$ condition and the Lebesgue differentiation theorem gives us that \begin{align*} \|W^\frac{1}{p} (u) A W^{-\frac{1}{p}} (u) \|^{p} & = \lim_{k \rightarrow \infty} \frac{1}{|I_{k} ^{u}|} \int_{I_{k} ^{u}} \left(\frac{1}{|I_{k} ^{u}|} \int_{I_{k} ^{u}} \|W^\frac{1}{p} (x) A W^{-\frac{1}{p}} (y) \|^{p} \, dy \right) \, dx \\ & \leq \limsup_{k \rightarrow \infty} \frac{1}{|I_{k} ^{u}|} \int_{I_{k} ^{u}} \left(\frac{1}{|I_{k} ^{u}|} \int_{I_{k} ^{u}} \|W^\frac{1}{p} (x) A W^{-\frac{1}{p}} (y) \|^{p'} \, dy \right)^\frac{p}{p'} \, dx \\ & \lesssim \sup_{\substack{ I \subseteq \Rd \\ I \text{ is a cube}}}  \|V_I A V_I ' \| ^p \\ & \approx \sup_{\substack{ I \subseteq \Rd \\ I \text{ is a cube}}}  \|V_I A V_I ^{-1} \| ^p. \end{align*}  On the other hand if $p > 2$ then one can repeat these arguments by estimating  $\|W^{-\frac{1}{p}} (u) A^* W^{\frac{1}{p}} (u) \|^{p'}$ in a similar manner.

We end this introduction with two comments and an application of Theorem \ref{MWMatrixThm}.   First, while applications of Theorem \ref{T1Thm} most likely apply to cases when $T$ is apriori bounded on $L^2$ (that is, when $T$ is a ``CZO" in the traditional sense), there is no great difficulty in proving Theorem \ref{T1Thm} in our level of generality.  Second,  while this will not be needed in the rest of the paper, it is very easy to show (and rather curious) that $B, B^* \in \BMOW$ implies that $B \in \text{BMO}$, which will be proved at the end of the third section.

Finally, notice that Theorem \ref{MWMatrixThm} has an intriguing application to the Hilbert transform and matrix weighted BMO that complements the scalar results in \cite{MW}.  Namely, let $W$ be a matrix A${}_p$ weight that satisfies the B${}_{2, p}$ condition  defined by  \begin{equation*} \sup_{\substack{I \subseteq \mathbb{R} \\ I \text{ is an interval }}} |I| \int_{\mathbb{R} \backslash I} \frac{\|V_I ^{-1} W^\frac{1}{p} (t)\|}{|t - c_I|^2} \, dt < \infty. \end{equation*} Also, as in \cite{MW} let $\W{H}$ be the slightly modified Hilbert transform defined by \begin{equation*} \W{H} \V{f} (x) = \lim_{\epsilon \rightarrow 0^+} \int_{|x - y| > \epsilon} \left[\frac{1}{x-y} + \frac{1_{[-1, 1]^c} (y)}{y} \right] \V{f} (y) \, dy \end{equation*} (so that $\W{H} \V{f}$ is well defined when $\vec{f}$ is locally integrable.)

\begin{proposition} \label{MWHilbertProp} If $W$ is a matrix A${}_p$ weight satisfying the B${}_{2, p}$ condition, then for any $\vec{f}$ where $W^{-\frac{1}{p}} \vec{f} \in L^\infty$ we have \begin{equation*}  \sup_{\substack{I \subseteq \mathbb{R} \\ I \text{ is an interval }}} \frac{1}{|I|} \int_I |W^{-\frac{1}{p}}(x)   (\W{H} \V{f} (x) - m_I (\W{H} \V{f}))|^{p'} \, dx \lesssim \|W^{-\frac{1}{p}} \vec{f}\|_{L^\infty}.   \end{equation*} \end{proposition}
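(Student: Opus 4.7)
The plan is to apply Theorem \ref{MWMatrixThm} by verifying its easier hypothesis (\ref{VectorBMO1}) with $q = p$, namely
\[ \sup_{I} \frac{1}{|I|}\int_I |V_I^{-1}(\W{H}\V{f}(x) - m_I \W{H}\V{f})|^p\, dx \lesssim \|W^{-\frac{1}{p}}\V{f}\|_{L^\infty}^p. \]
The principal advantage of working with $V_I^{-1}$ rather than with $W^{-\frac{1}{p}}(x)$ is that $V_I^{-1}$ is a constant matrix, so it commutes with $\W{H}$, which permits direct use of the unweighted $L^p$-boundedness of the Hilbert transform. Fix an interval $I$ and split $\V{f} = \V{f}_1 + \V{f}_2$ with $\V{f}_1 = \V{f}1_{2I}$ and $\V{f}_2 = \V{f}1_{(2I)^c}$. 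Observe that $\W{H}\V{g} - H\V{g}$ is a constant vector depending only on $\V{g}$, so when one subtracts the mean over $I$ this constant cancels and each contribution may be computed with $H$ in place of $\W{H}$.

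For the local part, since $V_I^{-1}$ is constant,
\[ V_I^{-1}(H\V{f}_1 - m_I H\V{f}_1) = H(V_I^{-1}\V{f}_1) - m_I H(V_I^{-1}\V{f}_1), \]
and the triangle inequality together with the $L^p$-boundedness of $H$ give
\[ \frac{1}{|I|}\int_I |V_I^{-1}(H\V{f}_1 - m_I H\V{f}_1)|^p\, dx \lesssim \frac{1}{|I|}\int_{2I}|V_I^{-1}\V{f}(y)|^p\, dy. \]
Factor $V_I^{-1}\V{f}(y) = (V_I^{-1}W^{\frac{1}{p}}(y))(W^{-\frac{1}{p}}(y)\V{f}(y))$, so the right side is at most $\|W^{-\frac{1}{p}}\V{f}\|_\infty^p \cdot \frac{1}{|I|}\int_{2I}\|V_I^{-1}W^{\frac{1}{p}}(y)\|^p\, dy$. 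Using the comparability $V_I \approx V_{2I}$ (a standard consequence of the matrix A${}_p$ condition), together with the elementary bound $\|M\|^p \lesssim \sum_{j}\|M\V{e}_j\|^p$ and the reducing identity $\frac{1}{|2I|}\int_{2I}\|W^{\frac{1}{p}}(y)V_{2I}^{-1}\V{e}_j\|^p\, dy \approx |V_{2I}V_{2I}^{-1}\V{e}_j|^p = 1$, one sees that $\frac{1}{|I|}\int_{2I}\|V_I^{-1}W^{\frac{1}{p}}(y)\|^p\, dy \lesssim 1$, as required.

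For the nonlocal part, the constant term in $\W{H}$ cancels for any two points of $I$, giving
\[ \W{H}\V{f}_2(x) - \W{H}\V{f}_2(c_I) = \int_{(2I)^c}\left[\frac{1}{x-y} - \frac{1}{c_I - y}\right]\V{f}(y)\, dy. \]
Using $|(x-y)^{-1}-(c_I-y)^{-1}| \lesssim |I|/|c_I-y|^2$ for $x \in I$, $y \in (2I)^c$, factoring $\V{f}(y) = W^{\frac{1}{p}}(y)(W^{-\frac{1}{p}}(y)\V{f}(y))$, and invoking the B${}_{2,p}$ condition yield the pointwise estimate
\[ |V_I^{-1}(\W{H}\V{f}_2(x) - \W{H}\V{f}_2(c_I))| \lesssim \|W^{-\frac{1}{p}}\V{f}\|_\infty \cdot |I|\int_{(2I)^c}\frac{\|V_I^{-1}W^{\frac{1}{p}}(y)\|}{|c_I-y|^2}\, dy \lesssim \|W^{-\frac{1}{p}}\V{f}\|_\infty. \]
Averaging this over $x \in I$ and using the triangle inequality shows that $|V_I^{-1}(\W{H}\V{f}_2(x) - m_I \W{H}\V{f}_2)|$ is pointwise bounded by $\|W^{-\frac{1}{p}}\V{f}\|_\infty$ on $I$.

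Combining the two contributions produces the desired estimate, after which Theorem \ref{MWMatrixThm} delivers the Proposition. I expect the chief technical point to be the local absorption $\frac{1}{|I|}\int_{2I}\|V_I^{-1}W^{\frac{1}{p}}(y)\|^p\, dy \lesssim 1$, which rests on the matrix A${}_p$ property of $W$ and the comparability $V_I \approx V_{2I}$ of reducing operators on an interval and its double; the nonlocal estimate is then essentially forced by the definition of the B${}_{2,p}$ condition, which was chosen precisely to control this type of tail.
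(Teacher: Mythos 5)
Your proof is correct and follows precisely the strategy the paper indicates (but does not carry out in detail): verify condition (\ref{VectorBMO1}) for $\W{H}\V{f}$ with $q=p$, then invoke Theorem \ref{MWMatrixThm}. Your local/nonlocal split at $2I$, the commutation of the constant matrix $V_I^{-1}$ past the convolution operator $H$ to invoke unweighted $L^p$-boundedness, the absorption $\frac{1}{|I|}\int_{2I}\|V_I^{-1}W^{1/p}\|^p \lesssim 1$ via $V_I \approx V_{2I}$ and the reducing-operator identity, and the use of the B${}_{2,p}$ condition for the tail are exactly the "slight modification of the necessity argument" the paper gestures at, and the estimates check out.

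One small difference worth noting: the paper's necessity proof for Theorem \ref{T1Thm} handled the local piece via duality against $L^{p'}(W^{1-p'})$ together with the assumed $L^p(W)$-boundedness of $T$ and a reverse-H\"older argument. You bypass this entirely by commuting the constant $V_I^{-1}$ through $H$ and applying unweighted $L^p$-boundedness, which is cleaner here because the kernel is scalar and the operator is convolution; the paper's duality machinery is needed in the general $(W,p)$-CZO setting where $V_I K V_I^{-1}$ is genuinely matrix-valued, but is superfluous for $\W{H}$. You should also note (as the paper implicitly does) that only the \emph{difference} $\W{H}\V{f}_2(x) - \W{H}\V{f}_2(c_I)$ is shown to converge absolutely by the B${}_{2,p}$ condition, which is all the BMO estimate requires.
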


\noindent To prove Proposition \ref{MWHilbertProp} one only has to slightly modify the proof argument for necessity in Theorem \ref{T1Thm} to see that the first condition in Theorem \ref{MWMatrixThm} is satisfied.

 Note that the B${}_{2, p}$ condition is very natural and in fact satisfied automatically if $W$ is a matrix A${}_p$ weight for $1 < p \leq 2$.  To see this, let $w_I(t) = \|V_I ^{-1} W^\frac{1}{p} (t) \|$.  Clearly $w_I(t) ^p$ is a scalar A${}_p$ weight with A${}_p$ characteristic independent of $I$ (since  $|W^\frac{1}{p} \V{e} |^p$ is well known to be a scalar A${}_p$ weight with A${}_p$ characteristic comparable to $\|W\|_{\text{A}_p}$ and independent of $\V{e}$.)  Thus, we clearly have that $w_I$ is a scalar A${}_p$ weight, and as remarked in \cite{MW}, it is known then that \begin{equation*} |I| \int_{\mathbb{R} \backslash I} \frac{w_I(t)}{|t - c_I|^2} \, dt \lesssim m_I w_I \end{equation*}  and trivially by H\"{o}lder's inequality we have that \begin{equation*} \sup_{\substack{I \subseteq \mathbb{R} \\ I \text{ is an interval }}} m_I w_I \leq \sup_{\substack{I \subseteq \mathbb{R} \\ I \text{ is an interval }}} \left(\frac{1}{|I|} \int_I \|V_I ^{-1} W^\frac{1}{p} (t)\| ^p \, dt \right)^\frac{1}{p} < \infty. \end{equation*}

\section{Sufficiency}

     We now follow the notation of \cite{H} closely. From now on let $\D_0$ be the standard dyadic lattice. By assumption, for any $\omega \in \prod_{\Z} \{0, 1\}^d$ we have that $\pi ^{\omega} _{T1}$ and $(\pi ^{\omega} _{T^*1})^*$ are both bounded on $L^p(W)$ (with operator norms independent of $\omega$)  where the paraproducts are with respect to the dyadic lattice $\Dw := \D_0 \dot+ \omega$ (see \cite{H} for definitions).     Thus, if $\widetilde{T} ^\omega = T - \pi ^{\omega} _{T1} - (\pi ^{\omega} _{T^*1})^*$ then Theorem \ref{T1Thm} will follow if we can prove that \begin{equation*} \left|\Ew \ip{\widetilde{T}^\omega \vec{f}}{\vec{g}}_{L^2} \right| \lesssim \|\vec{f}\|_{L^p(W)} \|\vec{g}\|_{L^{p'}(W^{1 -p'})} \end{equation*} for all $\vec{f} \in L^2(\Rd, \C) \cap L^p(W)$ and $ \vec{g} \in L^2(\Rd, \C) \cap L^{p'} (W^{1-p'})$ (where the expectation on $\prod_{\Z} \{0, 1\}^d$ is with respect to the standard product measure.)  \\

While other choices of ``good" and ``bad" cubes can probably be made, we will follow the definition from \cite{NTV}.  In particular, we will say a cube $I \in \Dw$ is bad if there exists $J \in \Dw$ such that $\ell(J) \geq 2^r \ell(I)$ and \begin{equation*} d (I, \partial J) \leq \ell(I)^\gamma \ell(J)^{1 - \gamma} \end{equation*} where $\gamma  = \frac{\alpha}{2\alpha + 2d}$ and we will say that $I \in \Dw$ is good if it is not bad.  As is shown in \cite{H}, we can fix $r > 0$ such that \begin{equation*} \pi_{\T{bad}} = \mathbb{P}(\{\omega : I \dot+ \omega \T{ is bad }\}) < 1 \end{equation*} (in fact, the above probability is  independent of $I \in \D_0$.

 Now let $\D$ be any dyadic lattice, let $W$ be a matrix A${}_p$ weight, and let $\W{T}  = T - \pi_{T1} - \pi_{T^*1} ^*$ where the paraproducts are with respect to $\D$. Also let $\W{T} _{I, J}$  for fixed $\varepsilon, \varepsilon' \in \S$ be the matrix defined by \begin{equation*} (\W{T} _{I, J})_{ij} = \ip{\widetilde{T}  (h_I ^\varepsilon \V{e}_j)}{h_J ^{\varepsilon'}  \V{e}_i}  \end{equation*} (where for notational convenience we do not omit the subscript $``L^2$ when denoting the bilinear form $T$ defines on $\MC{S}$)  and define $T_{I, J}$ similarly.   Furthermore let $\W{T}^{I_0} := V_{I_0} \W{T} V_{I_0} ^{-1} $ and we will also let $\W{T} _{I, J} ^{I_0}$ for a fixed cube $I_0$ be defined by $\W{T} _{I, J} ^{I_0} =  V_{I_0} \W{T} _{I, J} V_{I_0} ^{-1}$ (also define ${T}^{I_0}$ and ${T} _{I, J} ^{I_0}$ in a similar manner).  Note that the specific value of $\varepsilon, \varepsilon' \in \S$ will not play any role in what follows and thus for notational ease will be suppressed in the above definitions.)

\begin{lemma} \label{TheHuntLem}     Fix $I_0 \in \D$,  fix $\varepsilon, \varepsilon' \in \S$, let $W$ be a matrix A${}_p$ weight and let $T$ be a $(W, p)-$CZO.  If $\ell(I) \leq \ell(J)$ and both $I$ and $J$ are contained in $I_0$ then\begin{equation}  \|   \W{T}^{I_0} _{I, J} \| \lesssim \frac{\ell(I)^\frac{d + \alpha}{2} \ell(J)^\frac{d + \alpha}{2}}{D(I, J)^{d + \alpha}}  \label{HaarMatrixEst}\end{equation} if $I$ is good, where $D(I, J) = |I| + |J| + d(I, J)$ and $d(I, J) = \T{dist}(I, J)$.   \end{lemma}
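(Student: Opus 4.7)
The plan is to carry out the Nazarov--Treil--Volberg / Hyt\"onen Haar coefficient decay argument one matrix entry at a time, with every kernel and weak boundedness estimate conjugated by $V_{I_0}$. Since $\ell(I)\le\ell(J)$ and $I,J\in\D$, one of three cases occurs: $I=J$, $|I\cap J|=0$, or $I\subsetneq J$. The matrix coefficients of the two paraproducts simplify drastically in this regime. Unwinding the definition \eqref{ParaprodDef} gives $\ip{\pi_{T1}h_I^\varepsilon\V e_j}{h_J^{\varepsilon'}\V e_i}= m_J(h_I^\varepsilon)\cdot\bigl((T1)_J^{\varepsilon'}\bigr)_{ij}$, which vanishes identically once $\ell(I)\le\ell(J)$, while $\ip{(\pi_{T^*1})^*h_I^\varepsilon\V e_j}{h_J^{\varepsilon'}\V e_i}=\overline{m_I(h_J^{\varepsilon'})}\cdot\bigl((T^*1)_I^\varepsilon\bigr)_{ji}$ vanishes unless $I\subsetneq J$, in which case $m_I(h_J^{\varepsilon'})=c$, the constant value of $h_J^{\varepsilon'}$ on the dyadic child of $J$ containing $I$.

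For $I=J$: both paraproducts vanish and, after expanding $h_I^\varepsilon$ as a linear combination of $\pm|I|^{-1/2}1_{I^{(k)}}$ over the dyadic children of $I$, the matrix weak boundedness property applied to each pair $(I^{(k)},I^{(l)})\subseteq I\subseteq I_0$ gives $\|\W T^{I_0}_{I,I}\|\lesssim 1$, matching the target since $D(I,I)\approx\ell(I)$. For $|I\cap J|=0$ with $\ell(I)\le\ell(J)$: the paraproducts again vanish, and the mean-zero property $\int h_I^\varepsilon=0$ allows subtracting $K(x,c_I)$ to produce
\[
(\W T^{I_0}_{I,J})_{ij}=\iint\bigl[V_{I_0}(K(x,y)-K(x,c_I))V_{I_0}^{-1}\bigr]_{ij}\,h_I^\varepsilon(y)\,h_J^{\varepsilon'}(x)\,dy\,dx.
\]
The H\"older bound on $V_{I_0}KV_{I_0}^{-1}$ together with $|y-c_I|\lesssim\ell(I)$, $|x-c_I|\gtrsim D(I,J)$, and $\|h_I^\varepsilon h_J^{\varepsilon'}\|_{L^1}\approx\ell(I)^{d/2}\ell(J)^{d/2}$ produces a bound $\ell(I)^{\alpha+d/2}\ell(J)^{d/2}/D(I,J)^{d+\alpha}$, dominated by the target after one appeal to $\ell(I)\le\ell(J)$.

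The main obstacle is the case $I\subsetneq J$, where both the goodness of $I$ and the paraproduct cancellation are essential. Applying goodness to the dyadic children of $J$ (whose sidelengths $\ell(J)/2$ still satisfy the goodness hypothesis provided $r$ is chosen large enough at the outset) yields $d(I,\partial J')\gtrsim\ell(I)^\gamma\ell(J)^{1-\gamma}$, where $J'$ is the unique child of $J$ containing $I$. Writing $h_J^{\varepsilon'}=c\,1_{J'}+R$ with $R$ supported in $J\setminus J'$, we split
\[
\ip{Th_I^\varepsilon\V e_j}{h_J^{\varepsilon'}\V e_i}=c\bigl((T^*1)_I^\varepsilon\bigr)_{ji}-c\ip{Th_I^\varepsilon\V e_j}{1_{(J')^c}\V e_i}+\ip{Th_I^\varepsilon\V e_j}{R\V e_i},
\]
and the first summand is cancelled exactly by $\ip{(\pi_{T^*1})^*h_I^\varepsilon\V e_j}{h_J^{\varepsilon'}\V e_i}$. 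Each of the two surviving summands is estimated by the mean-zero trick at $c_I$ combined with the H\"older kernel bound on $V_{I_0}KV_{I_0}^{-1}$; the goodness separation $|x-c_I|\gtrsim\ell(I)^\gamma\ell(J)^{1-\gamma}$ is what makes the tail integrals converge and replaces $D(I,J)$ in the kernel denominator. With the prescribed choice $\gamma=\alpha/(2\alpha+2d)$, the exponents of $\ell(I)$ and $\ell(J)$ produced by these two terms balance to give exactly $\ell(I)^{(d+\alpha)/2}\ell(J)^{(d+\alpha)/2}/D(I,J)^{d+\alpha}$, recalling that $D(I,J)\approx\ell(J)$ here. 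The subtle points are the legitimate insertion of the cutoff $1_{(J')^c}$ inside $\ip{Th_I^\varepsilon\V e_j}{\cdot}$, accomplished by approximating with truncations $1_{[-R,R]^d\setminus J'}$ exactly as in the definition of $T1$ in \eqref{T1IntDef} and passing to the limit using the kernel representation valid on disjoint supports; and the bookkeeping showing that the $V_{I_0}$-conjugation commutes with both the bilinear pairing and the exact cancellation between $c((T^*1)_I^\varepsilon)_{ji}$ and the corresponding paraproduct entry.
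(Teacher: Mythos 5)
Your paraproduct simplification, your treatment of $I=J$, and your treatment of $I\subsetneq J$ (exact cancellation of the $c\,((T^*1)_I^\varepsilon)_{ji}$ term against the adjoint paraproduct, then integration over $(J')^c$ and $J\setminus J'$ using the goodness separation) all essentially track the paper's argument, modulo the minor points that the diagonal case needs the size condition for the off-diagonal pairs of children, not only the weak boundedness property, and that goodness of $I$ relative to the children of $J$ only kicks in when $\ell(J)>2^{r+1}\ell(I)$; the regime $\ell(I)\le\ell(J)\le 2^{r+1}\ell(I)$ needs a separate (easy) WBP/size argument, not a ``choose $r$ large enough at the outset'' fix.

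However, there is a genuine gap in the disjoint case $I\cap J=\emptyset$. You claim the pointwise bound $|x-c_I|\gtrsim D(I,J)$ for $x\in J$, but this is false whenever $I$ and $J$ are disjoint but close compared to $\ell(J)$ (e.g.\ adjacent cubes with $\ell(I)\ll\ell(J)$, where $|x-c_I|$ can be as small as $\approx\ell(I)$ while $D(I,J)\approx\ell(J)$). The correct pointwise bound is only $|x-c_I|\gtrsim d(I,J)+\ell(I)$, and with that the naive kernel estimate yields at best $(\ell(I)/\ell(J))^{d/2}$ for adjacent cubes, which does \emph{not} dominate the target $(\ell(I)/\ell(J))^{(d+\alpha)/2}$ since $\ell(I)\le\ell(J)$. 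The paper resolves this by splitting the disjoint pairs into $\Omega_{\text{out}}$, where $d(I,J)>\ell(I)^\gamma\ell(J)^{1-\gamma}$, and $\Omega_{\text{near}}$. In $\Omega_{\text{out}}$ one uses $|x-c_I|\gtrsim d(I,J)$; when furthermore $d(I,J)\le\ell(J)$ (so $D(I,J)\approx\ell(J)$) one substitutes the lower bound $d(I,J)>\ell(I)^\gamma\ell(J)^{1-\gamma}$ and uses the algebraic identity $\gamma(d+\alpha)=\alpha/2$ to recover exactly the target exponents. In $\Omega_{\text{near}}$, the goodness of $I$ forces $\ell(J)<2^r\ell(I)$, reducing the estimate to the diagonal-type argument via the weak boundedness property plus the size condition. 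Your proposal never invokes goodness of $I$ in the disjoint case at all, yet goodness is precisely what makes the near-touching configurations tractable, and the exponent of $\gamma$ is chosen precisely to make the $\Omega_{\text{out}}$, $d(I,J)\le\ell(J)$ subcase balance; without both ingredients the disjoint case fails.
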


\begin{proof}
 Throughout the proof (and the rest of the paper) we will define $Tf$ for a ``nice" scalar function $f$ to be the matrix \begin{equation*} (Tf)_{ij} = \langle T(f \V{e}_j), \V{e}_i \rangle_{\C} \end{equation*} and we similarly define $\W{T} f$ and $\W{T}^{I_0} f$.   Note that many of the estimates needed for this proof are by now well known, so we will omit some details.  As in \cite{H}, we decompose $\{(I, J) \in \D \times \D : \ell(I) \leq \ell(J)\}$ as

\begin{align*} \{(I, J) : \ell(I) \leq \ell(J) \} & = \{(I, J) : d(I, J) > \ell(I) ^\gamma \ell(J)^{1 - \gamma} \} \cup \{(I, J) : I \subsetneq J \}  \\ & \cup \{(I, J) : I = J \} \cup \{(I, J) : d(I, J) \leq \ell(I) ^\gamma \ell(J)^{1 - \gamma} \T{ and } I \cap J = \emptyset \} \\ & := \Omega_{\T{out}} \cup \Omega_{\T{in}} \cup   \Omega_{\T{equal}} \cup \Omega_{\T{near}} \end{align*} and we will estimate $\W{T}^{I_0} _{I, J}$ for $(I, J)$ in each of these sets in a manner that is similar to the arguments in \cite{H}. Also by a simple and straightforward computation we have \begin{equation*} \langle \W{T} ^{I_0} h_I ^\varepsilon ,   h_I ^{\varepsilon'} \rangle = \W{T}_{I, J} ^{I_0} \end{equation*} and a similar result holds for ${T}_{I, J} ^{I_0}$.

Now if $I \cap J = \emptyset$ then it is very easy to see that $\W{T}^{I_0} _{I, J}  = {T}^{I_0} _{I, J}$ where ${T}^{I_0} _{I, J}$ is defined in the obvious way.  Thus, if $c_I$ is the center of $I$, then for $(I, J) \in \Omega_{\T{out}}$, we have by the cancellation of $h_I$ and the standard estimates\begin{align*}  \|\ip{\W{T}^{I_0} h_I ^{\varepsilon}}{h_J ^{\varepsilon'}} _{L^2}\|   & \leq \int_{\Rd} \, \int_{\Rd} |h_I ^{\varepsilon}(y) | | h_J ^{\varepsilon'}(x)| \|V_{I_0} (K(x, y)   -  K(x, c_I)) V_{I_0} ^{-1} \| \, dy \, dx \\ & \lesssim \frac{\ell(I)^\alpha}{d(I, J) ^{d + \alpha}}  \ell(I)^{\frac{d}{2}} \ell(J)^{\frac{d}{2}}. \end{align*}

If $d(I, J) > \ell(J)$ then $d(I, J) \approx D(I, J)$ so that \begin{equation*} \frac{\ell(I)^\alpha}{d(I, J) ^{d + \alpha}}  \ell(I)^{\frac{d}{2}} \ell(J)^{\frac{d}{2}} \lesssim \frac{\ell(I)^\frac{d + \alpha}{2} \ell(J)^\frac{d + \alpha}{2}}{D(I, J)^{d + \alpha}}  \end{equation*} since $\ell(I) \leq \ell(J)$.  On the other hand if $d(I, J) \leq \ell(J)$ then $D(I, J) \approx \ell(J)$ and since  $d(I, J) > \ell(I) ^\gamma \ell(J)^{1 - \gamma}$ we get \begin{equation*} \frac{\ell(I)^\alpha}{d(I, J) ^{d + \alpha}}  \ell(I)^{\frac{d}{2}} \ell(J)^{\frac{d}{2}} \lesssim \frac{\ell(I)^\alpha}{\{\ell(I) ^\gamma \ell(J)^{1 - \gamma}\} ^{d + \alpha}}  \ell(I)^{\frac{d}{2}} \ell(J)^{\frac{d}{2}} \approx \frac{\ell(I)^\frac{d + \alpha}{2} \ell(J)^\frac{d + \alpha}{2}}{D(I, J)^{d + \alpha}} \end{equation*} where here we have used the fact that $\gamma(d + \alpha) = \frac{\alpha}{2}$.

If $I = J$ then clearly again we have $\W{T}^{I_0} _{I,I} = {T}^{I_0} _{I, I}$.  Thus, we can easily estimate $\|\W{T}^{I_0} _{I, I}\| $ by utilizing the weak boundedness property in conjunction with the size estimate property of each $V_{I_0} K(x, y) V_{I_0}^{-1}$ (see \cite{H}, p. 13 for example).

As for $(I, J) \in \Omega_{\T{near}}$, the ``goodness" of $I$ tells us that $\ell(I) \leq \ell(J) < 2^r \ell(I)$.   Thus, we can estimate $\W{T}^{I_0} _{I, J}$ in exactly the same manner as we estimate  $\W{T}^{I_0} _{I, J}$ when $I = J$.

To finish the proof, we estimate $\W{T}^{I_0} _{I, J}$ when $(I, J) \in \Omega_{\T{in}}$. By a straight forward computation using the definition of $T$ on $L^\infty$ we have that  ``$\W{T} ^{I_0} 1 = 0$" in the sense that \begin{equation} \ip{\W{T}^{I_0} h_J}{1_Q} = \ip{\W{T}^{I_0} h_J}{1_{Q^c}} =  \ip{{T}^{I_0} h_J}{1_{Q^c}}_{L^2} \label{T1=0} \end{equation} whenever $J, Q \in \D$ with $J \subseteq Q$.    Pick $J_I$ satisfying $\ell(J_I) = \frac12 \ell(J)$  and $I \subseteq J_I \subseteq J$.  Thus,  we have \begin{align*} \ip{\W{T}^{I_0} h_I}{h_J}_{L^2} & = \ip{  \W{T}^{I_0} h_I}{  1_{J_I ^c} h_J} + \ip{\W{T}^{I_0} h_I}{ 1_{J_I} h_J} \\ & = \ip{{T}^{I_0} h_I}{1_{J_I ^c } h_J}_{L^2} + h_J(J_I) \ip{{\W{T}}^{I_0} h_I}{ 1_{J_I} } \\ & = \ip{{T}^{I_0} h_I  }{1_{J_I ^c} (h_J - h_J(J_I))}_{L^2}  \\ & \lesssim |J|^{-\frac12} \int_{J_I ^c} |{T}^{I_0} h_I(x)| \, dx  \end{align*}

Now if $\ell(I) \leq \ell(J) \leq 2^r \ell(I)$ then by the size and cancellation estimates we have  \begin{equation*}  |J|^{-\frac12} \int_{J_I ^c} |{T}^{I_0} h_I(x)| \, dx \lesssim \left(\frac{|I|}{|J|} \right)^\frac12 \approx \frac{\ell(I)^\frac{d + \alpha}{2} \ell(J)^\frac{d + \alpha}{2}}{D(I, J)^{d + \alpha}} \end{equation*}

However, if $\ell(J) > 2^r \ell(I)$ then ``goodness" gives us that $d(I, J_I ^c) \gtrsim \ell(I) ^\gamma \ell(J)^{1 - \gamma}$ so by the standard estimates \begin{align*}   |J|^{-\frac12} \int_{J_I ^c} |{T}^{I_0} h_I(x)| \, dx \  & \lesssim  \frac{\ell(I) ^{\alpha + \frac{d}{2}} }{\ell(J) ^\frac{d}{2}} \int_{\ell(I)^\gamma \ell(J) ^{1 - \gamma}} ^\infty  \frac{1}{r^{1+\alpha}} \, dr \\ &\approx  \frac{\ell(I) ^{\alpha + \frac{d}{2}}}{\ell(J)^\frac{d}{2} \left[\ell(I)^\gamma \ell(J)^{1 - \gamma} \right]^\alpha } \\ & \leq  \frac{\ell(I) ^{\alpha + \frac{d}{2}}}{\ell(J)^\frac{d}{2} \left[\ell(I)^\frac12 \ell(J)^{\frac12} \right]^\alpha } \\ & = \frac{\ell(I)^\frac{d + \alpha}{2} \ell(J)^\frac{d + \alpha}{2}}{D(I, J)^{d + \alpha}}\end{align*} (since clearly $\gamma \leq \frac12$) which completes the proof.

\end{proof}

We now prove the following ``surgical" lemma which is similar to Proposition $3.5$ in \cite{H} but exploits independence more. Note here that $\text{smaller}\{I, J\}$ is $I$ when $\ell(I) \leq \ell(J)$ and $J$ otherwise.
     \begin{lemma} \label{HytonenLem} Assume $T1 \in \BMOW$ and $T^*1 \in \BMOWq$. For $\vec{f} \in L^2 \cap L^p(W)$ and $\vec{g} \in L^2$ we have \begin{equation*}    \Ew \ip{\widetilde{T}^\omega \vec{f}}{\vec{g}}_{L^2}  = \frac{1}{\pi_{\text{good}}} \Ew \sum_{I, J \in \Dw} 1_\text{good} (\text{smaller}\{I, J\}) \ip{\widetilde{T} ^\omega _{I, J} \vec{f}_I}{\vec{g}_J}_{\C}\end{equation*} where  $\widetilde{T} ^\omega _{I, J}$ is defined as $\widetilde{T}  _{I, J}$ with respect to $\D_\omega$ (and where implicitly the first sum is also taken over all $\varepsilon, \varepsilon' \in \S$.)

      \end{lemma}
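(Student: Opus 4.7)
The plan is to expand $\langle \widetilde{T}^\omega \V{f}, \V{g}\rangle_{L^2}$ in the Haar basis associated to $\Dw$ and then apply the now-standard NTV/Hyt\"onen independence trick to insert the indicator $1_{\T{good}}(\T{smaller}\{I,J\})$ at the price of the factor $\pi_{\T{good}}^{-1}$.

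First, by a standard cutoff and density argument it suffices to assume $\V{f}, \V{g} \in \MC{S}$. The Haar expansions $\V{f} = \sum_{I, \varepsilon} \V{f}_I^\varepsilon h_I^\varepsilon$ and $\V{g} = \sum_{J, \varepsilon'} \V{g}_J^{\varepsilon'} h_J^{\varepsilon'}$ (over $I, J \in \Dw$ and $\varepsilon, \varepsilon' \in \S$) are then essentially finite, and since $\widetilde{T}^\omega$ is defined as a bilinear form on $\MC{S}$, pairing these expansions against it gives
\[\langle \widetilde{T}^\omega \V{f}, \V{g}\rangle_{L^2} = \sum_{I, J \in \Dw}\sum_{\varepsilon,\varepsilon' \in \S} \langle \widetilde{T}^\omega_{I,J} \V{f}_I^\varepsilon, \V{g}_J^{\varepsilon'}\rangle_\C.\]
Splitting according to $\ell(I) \leq \ell(J)$ or $\ell(J) < \ell(I)$ (symmetric cases), the smaller cube is unambiguous in each piece.

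The main step is then the Hyt\"onen/NTV independence claim: re-indexing each $I \in \Dw$ as $I_0 \dot+ \omega$ for $I_0 \in \D_0$, for each fixed pair $I_0, J_0 \in \D_0$ with $\ell(I_0) \leq \ell(J_0)$, the event $\{I_0 \dot+ \omega \text{ is good in } \Dw\}$ is independent, under $\Ew$, of the $\omega$-dependent quantity
\[\bigl\langle \widetilde{T}^\omega_{I_0 \dot+ \omega,\, J_0 \dot+ \omega}\, \V{f}_{I_0 \dot+ \omega}^\varepsilon,\ \V{g}_{J_0 \dot+ \omega}^{\varepsilon'}\bigr\rangle_\C.\]
As is standard in the random lattice framework, this uses that $\omega$ decomposes into one marginal fixing the positions of cubes at scales $\leq \ell(J_0)$ (which, together with $\V{f}$ and $\V{g}$, determines the above quantity) and an independent marginal encoding the relative positions of $I_0 \dot+ \omega$ with respect to cubes at scales $\geq 2^r \ell(I_0)$, the latter being what governs the goodness condition. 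Since $\mathbb{P}(I_0 \dot+ \omega \text{ is good}) = \pi_{\T{good}}$ does not depend on $I_0$, one obtains
\[\Ew X_{I_0,J_0} = \frac{1}{\pi_{\T{good}}}\Ew\bigl[1_{\T{good}}(I_0 \dot+ \omega)\, X_{I_0,J_0}\bigr]\]
for each summand.

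It then remains to justify interchanging $\Ew$ with the double sum; in the reduced case $\V{f}, \V{g} \in \MC{S}$ it is essentially a finite sum, but to pass to the full statement one uses Lemma \ref{TheHuntLem} together with standard Schur/absolute-convergence estimates on the Haar coefficients coming from $\V{f}, \V{g} \in L^2$. Summing the identity over all reference pairs and over $\varepsilon, \varepsilon' \in \S$, then combining with the symmetric piece $\ell(J_0) < \ell(I_0)$, produces the claimed identity. The principal obstacle is the independence claim itself, which is standard but requires a careful decomposition of the product measure on $\omega$ into the two marginals mentioned above, essentially following Hyt\"onen's argument in \cite{H}.
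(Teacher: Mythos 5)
Your proof has a genuine gap at its central step: the independence claim for individual Haar summands when $\ell(I_0) \leq \ell(J_0)$ is false. The quantity
\[
\bigl\langle \widetilde{T}^\omega_{I_0\dot+\omega,\,J_0\dot+\omega}\, \V{f}^\varepsilon_{I_0\dot+\omega},\ \V{g}^{\varepsilon'}_{J_0\dot+\omega}\bigr\rangle_{\C}
\]
depends on the position of $J_0 \dot+ \omega$, hence on $\omega_j$ for all $2^{-j} < \ell(J_0)$, whereas the event $\{I_0 \dot+ \omega \text{ is good}\}$ depends on $\omega_j$ for $2^{-j} \geq \ell(I_0)$. When $\ell(J_0) > \ell(I_0)$ these index sets overlap on the range $\ell(I_0) \leq 2^{-j} < \ell(J_0)$, so the ``two marginals'' you invoke are not disjoint and the factorization $\Ew X_{I_0,J_0} = \pi_{\T{good}}^{-1}\Ew[1_{\T{good}}(I_0\dot+\omega)\,X_{I_0,J_0}]$ does not follow. (Indeed, independence of a single summand from $1_{\T{good}}(I_0\dot+\omega)$ only holds when $I_0$ is the \emph{larger} cube, i.e.\ $\ell(I_0) > \ell(J_0)$, which is the opposite of the case you need.)

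The paper circumvents exactly this obstacle by a more indirect sequence of identities. It first expands only $\V{f}$ in the Haar basis and keeps $\V{g}$ intact: the resulting summand $\ip{\widetilde{T}^\omega(h_{I\dot+\omega}\V{e}_k)}{\V{g}}_{L^2}\ip{\V{f}}{h_{I\dot+\omega}\V{e}_k}_{L^2}$ depends only on $\omega_j$ with $2^{-j} < \ell(I)$ (since $\widetilde{T}^\omega h_{I\dot+\omega}$ involves only the position of $I\dot+\omega$ and its descendants, and $\V{f}, \V{g}$ are fixed), and is therefore genuinely independent of $1_{\T{good}}(I\dot+\omega)$. This lets one insert the indicator $\pi_{\T{good}}^{-1}1_{\T{good}}(I\dot+\omega)$ for \emph{every} $I$. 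Only then does the paper expand $\V{g}$, yielding a double sum with the indicator on $I$ across all pairs. On the piece $\ell(I) > \ell(J)$, where the indicator sits on the larger cube, the individual summands \emph{are} independent of $1_{\T{good}}(I\dot+\omega)$, and one removes it. Comparing against the direct (indicator-free) double expansion of both sides isolates the identity on the region $\ell(I) \leq \ell(J)$, and the symmetric argument handles $\ell(J) < \ell(I)$. This chain of equalities, not a per-pair independence, is what produces the indicator on the smaller cube. A further, smaller issue: you appeal to Lemma \ref{TheHuntLem} to justify interchanging $\Ew$ with the double sum, but that lemma already presupposes goodness of the smaller cube, so using it before the indicator is in place is circular; the paper instead relies on Lemma \ref{T1Lem} (uniform $L^2$ boundedness of $\widetilde{T}^\omega$) to justify the Haar expansions and interchanges.
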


To prove this, however, we will first need the following result.

\begin{lemma}\label{T1Lem} If $W$ and $\W{T}^\omega$ are defined as above, then $\W{T}^\omega$ extends to a bounded operator on $L^2$ with operator norm independent of $\omega$.  \end{lemma}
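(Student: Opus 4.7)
The plan is to specialize Lemma~\ref{TheHuntLem} to the case when the matrix weight is the identity and then invoke an NTV-style $T(1)=0$ argument. The identity matrix is trivially a matrix A${}_p$ weight for every $1<p<\infty$, with reducing operators $V_I=V_I'=\text{Id}$ on every cube $I$. By the remark following the definition of a $(W,p)$-CZO in the introduction, the $(W,p)$-CZO kernel and weak boundedness hypotheses on $T$ imply the usual unweighted matrix-valued standard size, cancellation, and weak boundedness conditions; hence $T$ is automatically an $(\text{Id},2)$-CZO. Since the dyadic paraproducts $\pi_{T1}^\omega$ and $(\pi_{T^*1}^\omega)^*$ are defined purely in terms of the Haar coefficients of $T1$ and $T^*1$ and do not reference $W$, the operator $\widetilde{T}^\omega$ is unchanged under this reinterpretation.

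With this choice of weight, the $V_{I_0}$-conjugation in Lemma~\ref{TheHuntLem} collapses to the identity, and we obtain the unweighted Haar matrix estimate
\begin{equation*}
\|\widetilde{T}^\omega_{I, J}\| \lesssim \frac{\ell(I)^{(d+\alpha)/2}\, \ell(J)^{(d+\alpha)/2}}{D(I, J)^{d+\alpha}}
\end{equation*}
whenever $I, J \in \Dw$ lie in a common ambient cube with $\ell(I)\leq \ell(J)$ and the smaller cube good in $\Dw$ (together with the symmetric statement when $J$ is the smaller good cube). At the same time, the elementary dyadic identities $\pi_B 1 = B$ and $\pi_B^* 1 = 0$ yield $\widetilde{T}^\omega 1 = 0$ and $(\widetilde{T}^\omega)^* 1 = 0$, so $\widetilde{T}^\omega$ carries the cancellation required for an NTV-style $T(1)=0$ argument.

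For each fixed $\omega$, $L^2$ boundedness of $\widetilde{T}^\omega$ is then obtained by the Nazarov-Treil-Volberg strategy of \cite{NT}: Haar expand $\vec f, \vec g \in L^2 \cap \MC{S}$ in the lattice $\Dw$, write
\begin{equation*}
\ip{\widetilde{T}^\omega \vec{f}}{\vec{g}}_{L^2} = \sum_{\varepsilon, \varepsilon' \in \S} \sum_{I, J \in \Dw} \ip{\widetilde{T}^\omega_{I, J} \vec{f}_I^\varepsilon}{\vec{g}_J^{\varepsilon'}}_{\C^n},
\end{equation*}
and split the double sum by whether the smaller cube is good or bad in $\Dw$. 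The good-smaller contribution is bounded by $\|\vec f\|_{L^2}\|\vec g\|_{L^2}$ via Schur's test applied to the kernel estimate above. The bad-smaller contribution is disposed of by the standard NTV goodness/probability reduction, exploiting that the goodness of a cube in $\Dw$ is measurable with respect to the large-scale coordinates of $\omega$ while the Haar data and entries $\widetilde{T}^\omega_{I,J}$ depend only on the small-scale coordinates: the resulting independence, used exactly as in the proof of Lemma~\ref{HytonenLem} specialized to $W=\text{Id}$ and $p=2$, converts the full Haar pairing into $\pi_{\text{good}}^{-1}$ times the good-cube pairing. Since the kernel estimate constant, the Schur sum, and $\pi_{\text{good}}$ are all independent of $\omega$, the resulting $L^2$ operator bound is uniform in $\omega$.

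The main obstacle is the treatment of the bad-smaller-cube sector at fixed $\omega$, for which Lemma~\ref{TheHuntLem} provides no direct estimate; the cancellations $\widetilde{T}^\omega 1 = (\widetilde{T}^\omega)^* 1 = 0$ combined with the NTV goodness/independence framework are precisely what allow one to absorb this sector into the good one uniformly in $\omega$.
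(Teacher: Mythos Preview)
Your proposal has a genuine gap in the treatment of the bad-smaller-cube sector. The NTV goodness/probability reduction you invoke is an identity between \emph{expectations} over $\omega$: it asserts that $\Ew[\text{full pairing}]=\pi_{\text{good}}^{-1}\,\Ew[\text{good-only pairing}]$. It cannot convert the full Haar pairing into the good-only pairing for a \emph{fixed} $\omega$. For a single grid $\Dw$ the bad cubes are simply there, and Lemma~\ref{TheHuntLem} gives you no estimate on $\widetilde{T}^\omega_{I,J}$ when the smaller cube is bad. So your argument, as written, bounds at best $\Ew\bigl|\ip{\widetilde{T}^\omega\vec f}{\vec g}\bigr|$, not $\|\widetilde{T}^\omega\|_{L^2\to L^2}$ for each $\omega$, which is what the lemma asserts and what the proof of Lemma~\ref{HytonenLem} actually needs (to justify the Haar expansions and the interchange of $\Ew$ with the infinite sums).

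This also makes your appeal to ``the proof of Lemma~\ref{HytonenLem}'' circular: that proof begins with ``First, by Lemma~\ref{T1Lem}\ldots'', precisely because the independence trick requires the uniform $L^2$ boundedness of $\widetilde{T}^\omega$ as input. The paper avoids this circularity by proving Lemma~\ref{T1Lem} without any good/bad decomposition at all: for a fixed $\omega$ it runs the deterministic dyadic $T1$ argument of \cite{CT}, splitting $\{(I,J):\ell(I)\le\ell(J)\}$ into the three regions $(1),(2),(3)$ and estimating $\widetilde{T}^\omega_{I,J}$ directly in each, using the unweighted size/cancellation/WBP estimates together with the identity $\ip{\widetilde{T}^\omega h_I}{1_Q}=-\ip{T h_I}{1_{Q^c}}$ coming from $(\widetilde{T}^\omega)^*1=0$. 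The resulting bounds are $\omega$-free because the constants in the kernel and WBP conditions are.
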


\begin{proof} The proof is very similar to the dyadic proof of the classical ``T1" theorem proved in \cite{CT}, and we therefore only indicate where changes are needed. Suppose that $\V{f}$ and $\V{g}$ have finite Haar expansions with respect to $\D^\omega$ so that (after again suppressing the summations over $\varepsilon, \varepsilon \in \S$) \begin{equation*} \ip{\W{T}^\omega \V{f}}{\V{g}} = \sum_{I, J \in \D^\omega} \ip{\W{T}^\omega _{I, J}\V{f}_{I}}{\V{g}_J}_{\C}. \end{equation*}  By symmetry we can assume $\ell(I) \leq \ell(J)$, and as in \cite{CT} we outline the needed estimates for three cases (where $\gamma$ is as before): \begin{list}{}{}
\item $(1) \  d(I, \cup_i \partial P_i) \leq \ell(I)^\gamma \ell(J)^{1 - \gamma} $ where the $P_i$'s are the sons of $J$,
\item $(2) \ d(I, J) \geq \ell(I)^\gamma \ell(J)^{1 - \gamma}$,
\item $(3) \ d(I, \cup_i \partial P_i) \geq \ell(I) ^\gamma \ell(J)^{1 - \gamma} \text{ and } I \subsetneq J.$    \end{list}

Now if $B = T1$ and $\W{B} = T^*1$ then by definition \begin{equation} \label{TwDef} \pi_{B} ^\omega h_I +  \pi_{\W{B}}^\omega  h_I = \sum_{Q \subsetneq I} B_Q h_I (Q) h_Q - (\W{B} _I)^* \frac{1_I}{|I|} \end{equation} where $h_I(Q)$ is the constant value of $h_I$ on $Q \subsetneq I$.     We now look at $(I, J) \in (1)$. Since $\ell(I) \leq \ell(J)$ we have that $I \subseteq J$ or $I \cap J = \emptyset$.   Now by \eqref{TwDef} and the fact that $\pi_{B} ^\omega h_I \in L^p(W)$ we have  \begin{align*} |\W{T}^\omega _{I, J}| & \leq  |\ip{\W{T}^\omega h_I}{h_J 1_{(I^*)^c}}| + | \ip{\W{T}^\omega h_I}{h_J 1_{I^*}}| \\ & \leq \left\{
\begin{array}{ll}
      |\ip{{T} h_I}{h_J 1_{(I^*)^c}}_{L^2}| + |\ip{{T} h_I}{h_J 1_{I^*}}|  + |J|^{-\frac12} |\W{B}_I|   & \text{ if }I \subsetneq J \\
      |\ip{{T} h_I}{h_J 1_{(I^*)^c}}_{L^2}| + |\ip{{T} h_I}{h_J 1_{I^*}}| &  \text{ otherwise }
\end{array}
\right.
\\ & \lesssim \left(\frac{|I|}{|J|}\right)^\frac12
\end{align*} where the last inequality follows from the standard estimates involving the weak boundedness property, the size condition of $K$, and the cancellation condition of $K$.  Thus, arguing as in \cite{CT}, p. $8$ gives us that \begin{equation*} \sum_{I, J \in (1)} |\ip{\W{T}^\omega _{I, J}\V{f}_{I}}{\V{g}_J}_{\C} | \lesssim \|\V{f}\|_{L^2} \|\V{g}\|_{L^2}. \end{equation*}

However, if $(I, J) \in (2)$ then \eqref{TwDef} gives us that $\W{T}^\omega _{I, J} = T _{I, J}$, so arguing as in \cite{CT}, p. 11 (via using the cancellation condition on $K$) gives us that \begin{equation*} \sum_{I, J \in (2)} |\ip{\W{T}^\omega _{I, J}\V{f}_{I}}{\V{g}_J}_{\C} | \lesssim \|\V{f}\|_{L^2} \|\V{g}\|_{L^2}. \end{equation*}

Finally we handle the case $(I, J) \in (3)$. Now if $I \subset Q$ for $Q \in \D^\omega$  then the definition of $\W{B} = T^* 1$ allows one to easily check (since ``$\W{T}^\omega 1 = 0$" in the sense of \eqref{T1=0}) that \begin{equation*} \ip{\W{T}^\omega h_I}{1_Q}  = \ip{T h_I}{1_Q} - (\W{B}_I)^*   = -\ip{Th_I}{1_{Q^c}}_{L^2} \end{equation*} (which formally is trivial since $\ip{T h_I}{1_Q} - (\W{B}_I)^* = \ip{T h_I}{1_Q}  - \ip{Th_I}{1}$.)  Thus, if $I \subseteq J_I \subsetneq J$ where $J_I \in \D^\omega$ is a child of $J$, then \begin{align*} |\W{T}^\omega _{I, J}|   & \leq |\ip{\W{T}^\omega h_I}{h_J 1_{J \backslash J_I}}| + |\ip{\W{T}^\omega h_I}{h_J 1_{ J_I}}|  \\ & = |\ip{{T} h_I}{h_J 1_{J \backslash J_I}}_{L^2}| + |h_J(J_I) \ip{\W{T}^\omega h_I}{ 1_{  J_I}}_{L^2}| \\ & = |\ip{T h_I}{h_J 1_{I \backslash J_I}}_{L^2}| + |h_J(J_I) \ip{T h_I}{ 1_{ (J_I)^c}}_{L^2}|. \end{align*}  However, we can estimate both of these terms via the cancellation condition for $K$ to get that (see \cite{CT}, p. $15$) \begin{equation*} \sum_{I, J \in (3)} |\ip{\W{T}^\omega _{I, J}\V{f}_{I}}{\V{g}_J}_{\C} | \lesssim \|\V{f}\|_{L^2} \|\V{g}\|_{L^2}. \end{equation*}
\end{proof}


\noindent \textit{Proof of Lemma} \ref{HytonenLem}:
First, by Lemma \ref{T1Lem} we have that $\W{T}^\omega$ extends boundedly from $\Span \{h_I ^\varepsilon \V{v}: I \in \D_\omega, \varepsilon \in \S, \V{v} \in \C\}$ to $L^2$ with bounds independent of $\omega$.

For the rest of the proof we will fix $\varepsilon$ and $\varepsilon'$ and for the sake of notational ease write $h_I = h_I ^\varepsilon$ and $h_J = h_J ^{\varepsilon '}$.  One can then at the end sum up over all $\varepsilon, \varepsilon' \in \S$.  Now, as explained in \cite{H}, the badness of $I \dot+ \omega$ only depends on $\omega_j$ for $2^{-j} \geq \ell(I)$ whereas $I \dot+ \omega$ by definition itself depends on $2^{-j} < \ell(I)$.  Furthermore, \begin{equation*} \pi_{T1} ^\omega (h_{I \dot+ \omega} \V{e}_k) = \sum_{J \in \D (I)} (T1)_{J \dot+ \omega} m_{J \dot+ \omega} (h_{I \dot+ \omega} \V{e}_k) h_{J \dot+\omega} \end{equation*} and \begin{equation*}  (\pi_{T^*1} ^\omega)^* (h_{I \dot+ \omega} \V{e}_k)  = (T^*1)_{I \dot+ \omega} \frac{1_{I \dot+ \omega}}{|I \dot+ \omega|} \end{equation*} so that both $\pi_{T1} ^\omega (h_{I \dot+ \omega} \V{e}_k)$ and  $(\pi_{T^*1} ^\omega)^* (h_{I \dot+ \omega} \V{e}_k)$ only depend on $2^{-j} < \ell(I)$.

In other words, we have that $1_\text{good} (I \dot+ \omega)$ and $\ip{\widetilde{T} ^\omega (h_{I \dot+ \omega} \V{e}_k)}{ \vec{g}}_{L^2} \ip{\vec{f}}{ h_{I \dot+ \omega} \V{e}_k}_{L^2}$ are independent random variables.  Thus, by independence and the uniform $L^2$ boundedness of each $\W{T}^\omega$ on $L^2 \cap L^p(W)$ (which justifies both the Haar expansions and the interchange of expectations and summations)  we have

\begin{align*} \Ew & \ip{\widetilde{T}^\omega \vec{f}}{\vec{g}}_{L^2}  = \frac{1}{\pi_{\text{good}}} \sum_{k = 1}^n  \sum_{I \in \D_0 } \Ew  \left[1_\text{good} (I \dot+ \omega) \ip{\widetilde{T} ^\omega (h_{I \dot+ \omega} \V{e}_k)}{ \vec{g}}_{L^2} \ip{\vec{f}}{ h_{I \dot+ \omega} \V{e}_k}_{L^2}\right] \\ & = \frac{1}{\pi_{\text{good}}} \sum_{j, k = 1}^n  \sum_{I, J \in \D_0 }   \Ew  \left[1_\text{good} (I \dot+ \omega) \ip{\widetilde{T} ^\omega (h_{I \dot+ \omega} \V{e}_k )}{ h_{J \dot+ \omega} \V{e}_j}_{L^2} \overline{\ip{\vec{g}}{h_{J \dot+ \omega} \V{e}_j }_{L^2}}  \ip{\vec{f}}{ h_{I \dot +\omega} \V{e}_k}_{L^2}.\right] \end{align*}

Now if $\ell(I) > \ell(J)$ then again independence allows us to conclude that \begin{align*} & \frac{1}{\pi_{\T{good}}} \sum_{j, k = 1}^n   \sum_{\ell(I) > \ell(J) }   \Ew \left[1_\text{good} (I \dot+ \omega) \ip{\widetilde{T} ^\omega (h_{I \dot+ \omega} \V{e}_k )}{ h_{J \dot+ \omega} \V{e}_j}_{L^2} \overline{\ip{\vec{g}}{h_{J \dot+ \omega} \V{e}_j }_{L^2}}  \ip{\vec{f}}{ h_{I \dot +\omega} \V{e}_k}_{L^2}\right] \\ & = \frac{1}{\pi_{\T{good}}} \sum_{j, k = 1}^n  \sum_{\ell(I) > \ell(J) }   \Ew \left[1_\text{good} (I \dot+ \omega)\right] \Ew \left[\ip{\widetilde{T} ^\omega (h_{I \dot+ \omega} \V{e}_k )}{ h_{J \dot+ \omega} \V{e}_j}_{L^2} \overline{\ip{\vec{g}}{h_{J \dot+ \omega} \V{e}_j }_{L^2}}  \ip{\vec{f}}{ h_{I \dot +\omega} \V{e}_k}_{L^2}\right] \\ & = \sum_{j, k = 1}^n   \sum_{\ell(I) > \ell(J) }   \Ew \left[\ip{\widetilde{T} ^\omega (h_{I \dot+ \omega} \V{e}_k )}{ h_{J \dot+ \omega} \V{e}_j}_{L^2} \overline{\ip{\vec{g}}{h_{J \dot+ \omega} \V{e}_j }_{L^2}}  \ip{\vec{f}}{ h_{I \dot +\omega} \V{e}_k}_{L^2}\right] \end{align*} so that

\begin{align*}\Ew & \ip{\widetilde{T}^\omega \vec{f}}{\vec{g}}_{L^2} \\ & =  \frac{1}{\pi_{\text{good}}} \sum_{j, k = 1}^n  \sum_{\ell(I) \leq \ell(J) }   \Ew  \left[1_\text{good} (I \dot+ \omega) \ip{\widetilde{T} ^\omega (h_{I \dot+ \omega} \V{e}_k )}{ h_{J \dot+ \omega} \V{e}_j}_{L^2} \overline{\ip{\vec{g}}{h_{J \dot+ \omega} \V{e}_j }_{L^2}}  \ip{\vec{f}}{ h_{I \dot +\omega} \V{e}_k}_{L^2}\right] \\ & + \sum_{j, k = 1}^n   \sum_{\ell(I) > \ell(J) }   \Ew \left[\ip{\widetilde{T} ^\omega (h_{I \dot+ \omega} \V{e}_k )}{ h_{J \dot+ \omega} \V{e}_j}_{L^2} \overline{\ip{\vec{g}}{h_{J \dot+ \omega} \V{e}_j }_{L^2}}  \ip{\vec{f}}{ h_{I \dot +\omega} \V{e}_k}_{L^2}.\right] \end{align*}

However, arguing as before but not utilizing independence we have that \begin{align*} \Ew  \ip{\widetilde{T}^\omega \vec{f}}{\vec{g}}_{L^2} & =  \sum_{j, k = 1}^n  \sum_{\ell(I) \leq \ell(J) }   \Ew   \left[\ip{\widetilde{T} ^\omega (h_{I \dot+ \omega} \V{e}_k )}{ h_{J \dot+ \omega} \V{e}_j}_{L^2} \overline{\ip{\vec{g}}{h_{J \dot+ \omega} \V{e}_j }_{L^2}}  \ip{\vec{f}}{ h_{I \dot +\omega} \V{e}_k}_{L^2}\right]  \\ & +   \sum_{j, k = 1}^n  \sum_{\ell(I) >  \ell(J) }   \Ew   \left[\ip{\widetilde{T} ^\omega (h_{I \dot+ \omega} \V{e}_k )}{ h_{J \dot+ \omega} \V{e}_j}_{L^2} \overline{\ip{\vec{g}}{h_{J \dot+ \omega} \V{e}_j }_{L^2}}  \ip{\vec{f}}{ h_{I \dot +\omega} \V{e}_k}_{L^2}\right] \end{align*} so that \begin{align*} & \sum_{j, k = 1}^n  \sum_{\ell(I) \leq \ell(J) }   \Ew   \left[\ip{\widetilde{T} ^\omega (h_{I \dot+ \omega} \V{e}_k )}{ h_{J \dot+ \omega} \V{e}_j}_{L^2} \overline{\ip{\vec{g}}{h_{J \dot+ \omega} \V{e}_j }_{L^2}}  \ip{\vec{f}}{ h_{I \dot +\omega} \V{e}_k}_{L^2}\right]  \\ &  = \frac{1}{\pi_{\text{good}}} \sum_{j, k = 1}^n  \sum_{\ell(I) \leq \ell(J) }   \Ew  \left[1_\text{good} (I \dot+ \omega) \ip{\widetilde{T} ^\omega (h_{I \dot+ \omega} \V{e}_k )}{ h_{J \dot+ \omega} \V{e}_j}_{L^2} \overline{\ip{\vec{g}}{h_{J \dot+ \omega} \V{e}_j }_{L^2}}  \ip{\vec{f}}{ h_{I \dot +\omega} \V{e}_k}_{L^2}.\right] \end{align*}

Repeating these arguments almost word for word gives us that \begin{align*} & \sum_{j, k = 1}^n  \sum_{\ell(J) < \ell(I) }   \Ew   \left[\ip{\widetilde{T} ^\omega (h_{I \dot+ \omega} \V{e}_k )}{ h_{J \dot+ \omega} \V{e}_j}_{L^2} \overline{\ip{\vec{g}}{h_{J \dot+ \omega} \V{e}_j }_{L^2}}  \ip{\vec{f}}{ h_{I \dot +\omega} \V{e}_k}_{L^2}\right]  \\ &  = \frac{1}{\pi_{\text{good}}} \sum_{j, k = 1}^n  \sum_{\ell(J) < \ell(I) }   \Ew  \left[1_\text{good} (J \dot+ \omega) \ip{\widetilde{T} ^\omega (h_{I \dot+ \omega} \V{e}_k )}{ h_{J \dot+ \omega} \V{e}_j}_{L^2} \overline{\ip{\vec{g}}{h_{J \dot+ \omega} \V{e}_j }_{L^2}}  \ip{\vec{f}}{ h_{I \dot +\omega} \V{e}_k}_{L^2}. \right] \end{align*} so that

\begin{align*}  & \sum_{j, k = 1}^n  \sum_{I, J \in \D_0 }   \Ew   \left[\ip{\widetilde{T} ^\omega (h_{I \dot+ \omega} \V{e}_k )}{ h_{J \dot+ \omega} \V{e}_j}_{L^2} \overline{\ip{\vec{g}}{h_{J \dot+ \omega} \V{e}_j }_{L^2}}  \ip{\vec{f}}{ h_{I \dot +\omega} \V{e}_k}_{L^2}\right] \\ & =  \frac{1}{\pi_{\text{good}}} \sum_{j, k = 1}^n  \sum_{I, J \in \D_\omega }   \Ew  \left[ 1_\text{good} ( \T{smaller} \{I , J \}) \ip{\widetilde{T} ^\omega (h_{I } \V{e}_k )}{ h_{J } \V{e}_j}_{L^2} \overline{\ip{\vec{g}}{h_{J } \V{e}_j }_{L^2}}  \ip{\vec{f}}{ h_{I } \V{e}_k}_{L^2} \right] \end{align*}

\noindent which obviously then completes the proof. \hfill $\square$

As is discussed in \cite{NT}, an immediate consequence of \eqref{LpEmbedding}  and Khintchine's inequaliy is that for any $\V{f} \in L^p(W)$ we have \begin{equation} \label{LpWbound} \|\V{f}\|_{L^p(W)} \approx \left( \inrd \left(\sum_{\varepsilon \in \S} \sum_{I \in \D} \frac{|W^\frac{1}{p} (x) {\V{f}}_I ^\varepsilon |^2 }{|I|} 1_I (x)  \right)^\frac{p}{2} \, dx \right)^\frac{1}{p}. \end{equation}  As in \cite{NT}, note that \eqref{LpWbound} will be much more useful for us in this section than is \eqref{LpEmbedding}, though we will need \eqref{LpEmbedding} in the other two sections.

We can now prove Theorem $1.1$ \\

\noindent $\textit{Proof of Theorem} \, \ref{T1Thm} : $ By Lemma \ref{HytonenLem} we need to prove for $\vec{f} \in L^2 \cap L^p(W)$ and $\vec{g} \in L^2\cap L^{p'}(W^{1 - p'})$ that \begin{equation*} \sum_{I, J \in \D} 1_\text{good} (\text{smaller}\{I, J\} \left|\ip{\widetilde{T}  _{I, J} \vec{f}_I}{\vec{g}_J}_{\C}\right| \lesssim \|\V{f}\|_{L^p(W)} \|\V{g}\|_{L^{p'}(W^{1 - p'})} \end{equation*} independent of $\D$ (where $\W{T}$ is defined as in Lemma \ref{TheHuntLem}.)

To that end we first assume that $\ell(I) \leq \ell(J)$.  If \begin{equation*}  \W{T}^{(r)} _{I, J} = \left\{
     \begin{array}{lr}
     \displaystyle  \W{T}_{I, J} \text{ if } \ell(I) = 2^{-r} \ell(J) \\
     \displaystyle 0 \, \text{ otherwise} \end{array} \right.  \end{equation*}  then clearly it is enough to prove that
\begin{equation*} \sum_{\ell(I) \leq  \ell(J) }  1_\text{good} (I)\left|\ip{\widetilde{T} ^{(r)}  _{I, J} \vec{f}_I}{\vec{g}_J}_{\C}\right| \lesssim 2^{-\alpha r} \|\V{f}\|_{L^p(W)} \|\V{g}\|_{L^{p'}(W^{1 - p'})} \end{equation*} (where from now on we assume $I$ is good in appropriate sums and again we suppress the summation over all $\varepsilon, \varepsilon' \in \S$ when convenient).

Now let \begin{equation*} \V{f}_k = \sum_{\varepsilon \in \S} \sum_{I \in \D_k} \frac{\V{f}_I ^\varepsilon }{|I|^\frac12} 1_I   \end{equation*}  and similarly define $\V{g}_k$. It is then enough to show that \begin{align*}   \sum_{k \in \Z} \sum_{I \in \D_{k + r}, \,   J \in \D_{k } } & \int_{\Rd} \int_{\Rd}  \frac{1_I(s)}{|I|^{\frac12}} \frac{1_J(t)}{|J|^{\frac12}} \left|\ip{\W{T}_{I, J} ^{(r)}  \V{f}_{k + r}(s)}{\V{g}_{k} (t)}_{\C} \right| \, ds \, dt \\ & \lesssim  2^{-\frac{\alpha r}{2}} \|\V{f}\|_{L^p(W)} \|\V{g}\|_{L^{p'}(W^{1 - p'})} \end{align*}    \noindent   However, \eqref{LpWbound} exactly tells us that \begin{equation*} \|\vec{f} \|_{L^p(W)} \approx \|\{W^\frac{1}{p} \V{f}_k \}_{k \in \Z} \|_{L^p _{\ell^2}}  \end{equation*} Thus, it is enough to show that \begin{align*}   \sum_{k \in \Z} \sum_{I \in \D_{k + r}, \,   J \in \D_{k } } & \int_{\Rd} \int_{\Rd}  \frac{1_I(s)}{|I|^{\frac12}} \frac{1_J(t)}{|J|^{\frac12}} \left|\ip{\W{T}_{I, J} ^{(r)}  F_{k + r}(s)}{G_{k} (t)}_{\C} \right| \, ds \, dt \\ & \lesssim  2^{-\frac{r\alpha}{2}} \|W^{\frac{1}{p}} F \|_{L^p _{\ell^2}} \|W^{-\frac{1}{p}} G\|_{L^{p'}  _{\ell^2} } \end{align*}
for any $\C$ valued functions $F$ and $G$ defined on $\Rd \times \Z$ such that $W^\frac{1}{p} F  \in L^p _{\ell^2}$ and $W^{-\frac{1}{p'}} G  \in L^{p'}  _{\ell^2} = (L^p _{\ell^2})^*$.  However, if $S$ is the shift operator $(SF)_k (t) = F_{k - 1} (t)$ then clearly for all $m \in \Z$ we have $\|W^\frac{1}{p} S^m F\|_{L^p _{\ell^2}} = \|W^\frac{1}{p}  F\|_{L^p _{\ell^2}}$, which means that it is enough to show that \begin{align*}   \sum_{k \in \Z} \sum_{I \in \D_{k + r}, \,   J \in \D_{k } } & \int_{\Rd} \int_{\Rd}  \frac{1_I(s)}{|I|^{\frac12}} \frac{1_J(t)}{|J|^{\frac12}} \left|\ip{\W{T}_{I, J} ^{(r)}  F_{k }(s)}{G_{k} (t)}_{\C} \right| \, ds \, dt \nonumber \\ & \lesssim  2^{-\frac{r\alpha}{2}} \|W^{\frac{1}{p}} F \|_{L^p _{\ell^2}} \|W^{-\frac{1}{p}} G\|_{L^{p'}  _{\ell^2} }  \end{align*}

or equivalently \begin{align}   \sum_{k \in \Z} \sum_{I \in \D_{k + r}, \,   J \in \D_{k } } & \int_{\Rd} \int_{\Rd}  \frac{1_I(s)}{|I|^{\frac12}} \frac{1_J(t)}{|J|^{\frac12}} \left|\ip{ W^\frac{1}{p} (t) \W{T}_{I, J} ^{(r)}  W^{-\frac{1}{p}} (s) F_{k }(s)}{G_{k} (t)}_{\C} \right| \, ds \, dt \nonumber \\ & \lesssim  2^{-\frac{r\alpha}{2}} \| F \|_{L^p _{\ell^2}} \| G\|_{L^{p'}  _{\ell^2} }   \label{FinalKerEst1}  \end{align}

Now if $s \in I \in \D_{k + r}$ and $t \in J \in \D_k$ and $\rho_{I, J}$ is defined by \begin{equation*} \rho_{I, J}   = \frac{\ell(I)^\frac{ \alpha}{2} \ell(J)^\frac{ \alpha}{2}}{D(I, J)^{ d + \alpha}} \end{equation*} then \begin{equation*} \rho_{I, J} \lesssim \MC{K}_k(s, t)  \end{equation*} where \begin{equation*} \mathcal{K}_k(s, t) = \frac{ 2^{-\frac{r\alpha}{2}} 2^{-k\alpha} }{(2^{-k} + |s - t|)^{d + \alpha}}. \end{equation*}

We now proceed in a manner similar to that in \cite{PTV}, p. $13 - 14$.  By standard arguments for estimating Poisson-type kernels, we clearly have \begin{equation*} \frac{ 2^{-\frac{r\alpha}{2}} 2^{-k\alpha} }{(2^{-k} + |s - t|)^{d + \alpha}} \lesssim 2^{-\frac{r\alpha}{2}} \sum_{j = 0}^\infty 2^{-j\alpha } \frac{1_{B_{\infty} (s, 2^{j - k})}(t) }{|B_{\infty}(s, 2^{j-k})|} \end{equation*} where $B_{\infty} (s, R)$ denotes the ball (i.e. cube) of radius $R$ with center $s$ in the $\ell_\infty$ norm on $\Rd$.  However, it is not hard to show that \begin{equation*} \frac{1_{B_{\infty} (s, 2^{j - k})} (t)}{|B_{\infty}(s, 2^{j-k})|} \lesssim \int_{[0, 1)^{d}} \left[\sum_{\MC{C} \in {\D}_{k - j - 2 }, u} \frac{1_{\MC{C}}(s) 1_{\MC{C}}(t)}{|\MC{C}|} \right] \, du   \end{equation*}  where in general  \begin{equation*} {\D}_{m, u} = \left\{ 2^{-m} \left[[u, u + 1)^d + \ell\right] \right\}_{\ell \in \Z^d} \end{equation*} for $m \in \Z$.  In particular, if $\mathbb{P}$ is the uniform probability distribution on $[0, 1)^d$, then it is straight forward to see that \begin{equation*} \mathbb{P} (\{u : (s, t) \in (I+u) \times (I+u) \text{ for some } I \in \D_m \}) \geq \left(\frac{3}{4}\right)^d \ \end{equation*} as long as $|s - t|_{\infty} \leq 2^{-m - 2}$. Combining these estimates, we arrive at \begin{equation} 1_I(s) 1_J(t) \rho_{I, J} \lesssim 2^{-\frac{r\alpha}{2}} \sum_{j = 0}^\infty 2^{-j\alpha} \int_{[0, 1)^d}  \left[ \sum_{\MC{C} \in {\D}_{k - j - 2, u}} \frac{1_{\MC{C}}(s) 1_{\MC{C}}(t)}{|\MC{C}|} \right] \, du \label{FinalKerEst2}\end{equation}

Plugging $(\ref{FinalKerEst2})$ into $(\ref{FinalKerEst1})$  and letting $\MC{T}_{I, J} ^{(r)}$ be defined by \begin{equation*} \MC{T}_{I, J} ^{(r)} = \frac{\W{T} ^{(r)}_{I, J}}{\rho_{I, J} |I|^\frac{1}{2} |J|^\frac{1}{2}} \end{equation*}  tells us that we need to show (uniformly in $j$ and $u$) that

\begin{align*}      \sum_{k \in \Z} \sum_{\MC{C} \in {\D}_{k - j - 2, u}} & \int_{\Rd} \, \int_{\Rd}   \frac{1_{\MC{C}}(s) 1_{\MC{C}}(t)}{|\MC{C}|} \left|\ip{ W^\frac{1}{p} (t) {\MC{T}}_{I_{s} ^{k+r}, J_t ^k} ^{(r)}  W^{-\frac{1}{p}} (s) F_{k }(s)}{G_{k} (t)}_{\C} \right|  \, ds \, dt  \\ & \lesssim  \|F\|_{L^p _{\ell^2}} \|G\|_{L^{p'} _{\ell^2}} \end{align*} uniformly in $u$ and $j$, where $I_{s} ^{k+r}$ is the unique $I \in \D_{k + r}$ containing $s$ and $J_t ^k$ is defined similarly.  However, we can estimate

\begin{align*} & \left| \ip{W^{\frac{1}{p}} (t) (\MC{T}_{I_{s} ^{k+r}, J_t ^{k}} ^{(r)} )  W^{-\frac{1}{p}} (s) F_k (s)}{G_k(t)}_{\C} \right| \\ &  \leq  \left\|W^\frac{1}{p} (t) V_\MC{C} ^{-1}  \right\| \left\| W^{-\frac{1}{p}} (s)  V_\MC{C} \right\|  \\ & \times    \left\|  V_\MC{C} (\MC{T}_{I_{s} ^{k+r}, J_t ^{k}} ^{(r)} ) V_\MC{C} ^{-1}  \right\| |F_k(s)|     |G_k (t)| \\ & \lesssim  \left\|W^\frac{1}{p} (t) V_\MC{C} ^{-1}  \right\| \left\| W^{-\frac{1}{p}} (s)  V_\MC{C} \right\| |F_k(s)| |G_k (t)|  \end{align*} by the definition of $\MC{T}_{I, J} ^{(r)}$ and Lemma \ref{TheHuntLem} (since $I_{s} ^{k+r} \cup J_t ^{k} \subseteq \MC{C}$).

Thus, it is enough to show (uniformly) that \begin{align*}      \sum_{k \in \Z} \sum_{\MC{C} \in \D_{k - j - 2, u}} & \int_{\Rd} \, \int_{\Rd}   \frac{1_{\MC{C}}(s) 1_{\MC{C}}(t)}{|\MC{C}|} \left\|W^\frac{1}{p} (t) V_\MC{C} ^{-1}  \right\| \left\| W^{-\frac12} (s)  V_\MC{C} \right\| |F_k(s)| |G_k (t)|  \, ds \, dt  \\ & \lesssim  \|F\|_{L^p _{\ell^2}} \|G\|_{L^{p'} _{\ell^2}} \end{align*} or again utilizing ``shift operators," it is enough to show that \begin{align*}      \sum_{k \in \Z} \sum_{\MC{C} \in \D_{k, u}} & \int_{\Rd} \, \int_{\Rd}   \frac{1_{\MC{C}}(s) 1_{\MC{C}}(t)}{|\MC{C}|} \left\|W^\frac{1}{p} (t) V_\MC{C} ^{-1}  \right\| \left\| W^{-\frac12} (s)  V_\MC{C} \right\| |F_k(s)| |G_k (t)|  \, ds \, dt \\ & \lesssim  \|F\|_{L^p _{\ell^2}} \|G\|_{L^{p'} _{\ell^2}} \end{align*}

To finally finish the proof, we clearly need to show that $\MC{A}_{u, m}$ is bounded on $L^p_{\ell^2}$ uniformly in $u$ and $m$, where  \begin{equation*}(\MC{A}_{u, m} F)_k (t) = \sum_{\MC{C} \in \D_{m, u}  } \frac{1_\MC{C}(t)}{|\MC{C}|} \int_\MC{C} \left\|W^\frac{1}{p} (t) V_\MC{C} ^{-1}  \right\| \left\| W^{-\frac12} (s)  V_\MC{C} \right\| F_k(s) \, ds. \end{equation*}   However, this is exactly the content of the proof of Lemma $14.2$ in \cite{NT} (more precisely, the authors in fact prove that $\MC{A}_{u, m}$ is bounded on $L^p_{\ell^2}$ uniformly in $u$ and $m$ in order to obtain a slightly different result.)

Thus, for $\vec{f} \in L^2 \cap L^p(W)$ and $\vec{g} \in L^2\cap L^{p'}(W^{1 - p'})$ we have that \begin{equation*} \sum_{\ell(I) \leq \ell(J) } 1_\text{good} (I) \left|\ip{\widetilde{T}  _{I, J} \vec{f}_I}{\vec{g}_J}_{\C}\right| \lesssim \|\V{f}\|_{L^p(W)} \|\V{g}\|_{L^{p'}(W^{1 - p'})} \end{equation*} whenever $W$ is a matrix A${}_p$ weight and $T$ is a $(W, p)$-CZO.

However, the latter is true if and only if $W^{1-p'}$ is a matrix A${}_{p'}$ weight and $T^*$ is a $(W^{1-p'}, p')$-CZO.
Furthermore, we clearly have \begin{equation*} (\W{T}_{I, J}) ^* = {\left((\W{T} )^*\right)}_{J, I}  = (\W{T^*})_{J, I}  \end{equation*}  so that \begin{align*} \sum_{\ell(J)  < \ell(J) } 1_\text{good} (J) \left|\ip{\widetilde{T}  _{I, J} \vec{f}_I}{\vec{g}_J}_{\C}\right| & =  \sum_{\ell(J)  < \ell(J) } 1_\text{good} (J) \left|\ip{(\W{T^*})_{J, I} \vec{g}_J}{\vec{f}_I}_{\C}\right| \\ & \lesssim \|\V{g}\|_{L^{p'}(W^{1 - p'})}  \|\V{f}\|_{L^p(W)} \end{align*} which completes the proof.

\section{Necessity}
To prove necessity we will need the stopping time from \cite{I,IKP}.  In particular,  assume that $W$ is a matrix A${}_p$ weight. For any cube $I \in \D$ and some fixed $\lambda_1, \lambda_2  > 0$ (that will be specified later,) let $\J(I)$ be the collection of maximal $J \in \D(I)$ such that \begin{equation}     \|V_J V_I ^{-1} \|^p > \lambda_1 \  \  \text{   or   }  \ \  \| V_J ^{-1} V_I \|^{p'} > \lambda_2. \label{STDef} \end{equation}   Also, let $\F(I)$ be the collection of dyadic subcubes of $I$ not contained in any cube $J \in \J(I)$.  Clearly $J \in \F(J)$ for any $J \in \D(I)$ if $\lambda_1, \lambda_2 > 1$.

Let $\J^0 (I) := \{I\}$ and inductively define $\J^j(I)$ and $\F^j(I)$ for $j \geq 1$ by $\J^j (I) := \bigcup_{J \in \J^{j - 1} (I)} \J(J)$ and $\F^j (I) := \bigcup_{J \in \J^{j - 1} (I)} \F(J)$. Clearly the cubes in $\J^j(I)$ for $j > 0$ are pairwise disjoint.  Furthermore, since $J \in \F(J)$ for any $J \in \D(I)$, we have that $\D(I) = \bigcup_{j = 0}^\infty \F^j(I)$.

Note if $W$ is a matrix A${}_p$ weight then for $\lambda_1 $ large enough (independent of $W$) and $\lambda_2  \approx \|W\|_{\text{A}_p} ^{\frac{p'}{p}}$, we have that $|\bigcup \J ^j (I)| \leq 2^{-j} |I|$ for every $I \in \D$ (see \cite{I, IKP}). While we will not need it, it is interesting to notice that Lemma $3.1$ in \cite{V} easily implies that this conclusion is true when $W$ is a matrix A${}_{p, \infty}$ weight (see \cite{V} for the definition).

We will now use this stopping time to prove the following matrix weighted John-Nirenberg lemma which, as was mentioned in the introduction,  will be crucial for the proof of necessity.     \begin{lemma} \label{JNlemma} Let $1 < p , q < \infty$ and suppose that $W$ is a matrix A${}_p$ weight. If  $\BMOWpq$ is the space of matrix functions $B$ such that  \begin{equation*}  \sup_{\substack{I \subset \R^d \\  I \text{ is a cube}}} \frac{1}{|I|} \int_I \|V_I (B(x) - m_I B) V_I ^{-1} \|^q \, dx < \infty,   \end{equation*} then we have that \begin{equation*} \bigcup_{1 < q < \infty} \BMOWpq \subseteq \BMOW. \end{equation*}  \end{lemma}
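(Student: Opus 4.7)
The plan is to use the stopping time construction from \cite{I, IKP}: given the matrix A${}_p$ weight $W$ and a cube $I$, this yields a partition $I = \bigsqcup_{j \geq 0} \bigsqcup_{J \in \J^j(I)} E(J)$ with $E(J) = J \setminus \bigcup \J(J)$, satisfying $|\bigcup \J^j(I)| \leq 2^{-j}|I|$; along any chain of nested stopping ancestors $J = J_j \subset \cdots \subset J_0 = I$, the conditions in \eqref{STDef} and the doubling of reducing operators give $\|V_{J_i} V_I^{-1}\| + \|V_I V_{J_i}^{-1}\| \lesssim C^i$ for a constant $C$ depending on $\lambda_1, \lambda_2$. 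I will treat the case $2 \leq p < \infty$; the case $1 < p \leq 2$ follows from the symmetry $W \leftrightarrow W^{1-p'}$ and the dual formulation of $\BMOW$.

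A preliminary step will be to self-improve $\BMOWpq$ in $q$, showing that membership in $\BMOWpq$ for some $q > 1$ implies membership for every $q \in (1, \infty)$. This will be achieved via a stopping-time-based good-$\lambda$ argument that exploits the geometric decay $|\bigcup \J^j(I)| \leq 2^{-j}|I|$ combined with the uniform bound $\|V_J V_I^{-1}\| + \|V_I V_J^{-1}\| \lesssim 1$ on $J \in \F(I)$ to yield exponential decay of the level sets of $\|V_I(B(\cdot) - m_I B) V_I^{-1}\|$ on $I$. Once $q$ is taken large, fix $I$ and decompose $\frac{1}{|I|}\int_I \|W^{1/p}(x)(B(x) - m_I B)V_I^{-1}\|^p\,dx$ via the stopping partition; for $x \in E(J)$ with $J \in \J^j(I)$, write $B(x) - m_I B = (B(x) - m_J B) + \sum_{i=0}^{j-1}(m_{J_{i+1}}B - m_{J_i} B)$. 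The local piece will be bounded by the factorization $\|W^{1/p}(x)(B(x)-m_J B)V_I^{-1}\| \leq \|W^{1/p}(x) V_J^{-1}\| \cdot \|V_J(B(x)-m_J B)V_J^{-1}\| \cdot \|V_J V_I^{-1}\|$, Hölder's inequality with exponents $q/p$ and $q/(q-p)$, and the reducing operator identity $\int_J \|W^{1/p}(x) V_J^{-1}\|^r \, dx \lesssim |J|$ valid for $r \leq p + \epsilon$ by the reverse Hölder property of matrix A${}_p$ weights. The telescoping pieces will be treated analogously, with Jensen's inequality controlling the jumps $\|V_{J_i}(m_{J_{i+1}}B - m_{J_i}B)V_{J_i}^{-1}\|$ by the (self-improved) $\BMOWpq$ norm on $J_i$.

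The main obstacle will be summability across generations: the contribution from each $J \in \J^j(I)$ is naively of order $\|V_J V_I^{-1}\|^p |J| \lesssim C^{jp}|J|$, giving a geometric series $\sum_j C^{jp} \cdot 2^{-j}|I|$ that requires $C^p < 2$, a condition the stopping parameters do not automatically enforce. This tension will be resolved by the self-improvement: having taken $q$ arbitrarily large, the application of Hölder's inequality effectively replaces the exponent $p$ on $\|V_J V_I^{-1}\|$ by $p(1 - p/q)$, which tends to $0$ as $q \to \infty$, so that $\sum_j C^{jp(1-p/q)} \cdot 2^{-j}$ converges for any fixed $C$ provided $q$ is chosen sufficiently large. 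Assembling the local and telescoping contributions then yields the desired $\BMOW$ bound.
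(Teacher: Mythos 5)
Your proposal takes a genuinely different route from the paper, but it has a gap that I do not see how to close. The paper does not attack the $\BMOW$ integral directly. It instead passes through the Carleson/square-function reformulation from \cite{IKP} (Theorem~1.3 there): $B \in \BMOW$ iff $\sup_I |I|^{-1}\sum_{\varepsilon}\sum_{J \in \D(I)}\|V_J B_J^\varepsilon V_J^{-1}\|^2 < \infty$. The crucial feature of this reformulation is that the summand is \emph{local} in $J$ — it involves $V_J$ alone, never $V_I$. After reducing via H\"older to $1 < q \le 2$ (so that $t \mapsto t^{q/2}$ is concave), one decomposes $\D(I) = \bigcup_j \bigcup_{K \in \J^{j-1}(I)} \F(K)$ and, for $J \in \F(K)$, replaces $V_J$ by $V_K$ at a cost $\lesssim \|W\|_{\text{A}_p}^{1/p}$ which is \emph{uniform} in the generation $j$, because the comparison is only between a cube and its immediate stopping parent. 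The geometric decay $|\bigcup \J^{j}(I)| \lesssim 2^{-j}|I|$ then closes the sum without any accumulation of constants.

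Your proposal, by contrast, works directly with $\|W^{1/p}(x)(B(x)-m_I B)V_I^{-1}\|$, which necessarily drags the ``global'' reducing operator $V_I$ across every stopping generation, so that the factor $\|V_J V_I^{-1}\|^p \lesssim C^{jp}$ appears, with $C = C(\lambda_1, \lambda_2, \|W\|_{\text{A}_p}) > 1$ not under your control. You recognize this tension, but the fix you propose does not work. First, $\|V_J V_I^{-1}\|$ is a constant (it does not depend on the integration variable $x$), so it passes straight through any H\"older inequality applied to the $x$-integral over $E(J)$ — the exponent $p$ it carries cannot be reduced by H\"older, no matter how large $q$ is. Second, even on its own terms the arithmetic is backwards: $p(1 - p/q) \to p$ as $q \to \infty$, not $0$, so even if such an exponent did appear the series $\sum_j C^{jp(1-p/q)}2^{-j}$ would not gain convergence from taking $q$ large. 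Third, the preliminary self-improvement of $\BMOWpq$ in $q$ is asserted but not proved, and it is not a routine good-$\lambda$ argument: the level sets of $\|V_I(B(\cdot)-m_I B)V_I^{-1}\|$ on $E(K)$ for $K \in \J^j(I)$ are controlled through the same growing factors $\|V_I V_K^{-1}\|\cdot\|V_K V_I^{-1}\| \lesssim C^{j}$, so the standard John--Nirenberg iteration does not obviously give exponential decay here. In short, the essential idea you are missing is the passage to the local Carleson-measure characterization of $\BMOW$, which is precisely what lets the paper avoid comparing $V_J$ to $V_I$ across generations.
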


\begin{proof} Let $B \in \BMOWpq$ for some $q > 1$ so by dyadic Littlewood-Paley theory,  \begin{equation} \label{BAssump}\sup_{\substack{I \subset \R^d \\  I \text{ is a cube}}} \frac{1}{|I|} \int_{I} \left( \sum_{\varepsilon \in \S} \sum_{J \in \D(I)} \frac{\|V_I B_J ^\varepsilon V_I ^{-1} \| ^2}{|J|} 1_J (x)  \right) ^\frac{q}{2} \, dx < \infty. \end{equation}  However, by Theorem $1.3$ in \cite{IKP} we have that $B \in \BMOW$ if and only if \begin{equation*} \sup_{\substack{I \subset \R^d \\  I \text{ is a cube}}} \, \frac{1}{|I|} \sum_{\varepsilon \in \S} \sum_{J \in \D(I)}  \|V_J B_J ^\varepsilon V_J ^{-1}\|^2 < \infty \end{equation*}  which by the classical John-Nirenberg theorem is equivalent to \begin{equation}  \label{Btoprove} \sup_{\substack{I \subset \R^d \\  I \text{ is a cube}}}  \frac{1}{|I|} \int_{I} \left( \sum_{\varepsilon \in \S}  \sum_{J \in \D(I)} \frac{\|V_J B_J ^\varepsilon V_J ^{-1} \| ^2}{|J|} 1_J (x)  \right) ^\frac{q}{2} \, dx < \infty. \end{equation}

Clearly by H\"{o}lder's inequality we can assume that $1 < q \leq 2$.  Note that  $J \in \F(K)$ implies that $\|V_J V_K^{-1}\| \lesssim 1$ and $\|V_K V_J^{-1}\| \lesssim \|W\|_{\text{A}_p} ^\frac{1}{p}$, so that for fixed $I \in \D$,

\begin{align}
&  \frac{1}{|I|} \int_{I} \left( \sum_{\varepsilon \in \S} \sum_{J \in \D(I)} \frac{\|V_J B_J ^\varepsilon V_J ^{-1} \| ^2}{|J|} 1_J (x)  \right) ^\frac{q}{2} \, dx \nonumber \\ & =
\frac{1}{|I|} \int_I \left( \sum_{j = 1} ^\infty \sum_{K \in \J^{j - 1} (I)} \sum_{\varepsilon \in \S} \sum_{J \in \F(K)}  \frac{\|V_J B_J ^\varepsilon V_J ^{-1} \|^2 }{|J|} 1_J (x) \right) ^\frac{q}{2} \, dx \nonumber \\ & \lesssim \frac{\|W\|_{\text{A}_p} ^\frac{q}{p}}{|I|}  \int_I \left(  \sum_{j = 1} ^\infty \sum_{K \in \J^{j - 1} (I)}  \sum_{\varepsilon \in \S} \sum_{J \in \D(K)} \frac{\|V_K B_J ^\varepsilon  V_K ^{-1} \|^2 }{|J|} 1_J (x) \right) ^ \frac{q}{2} \, dx \nonumber \\ & \leq  \frac{\|W\|_{\text{A}_p} ^\frac{q}{p}}{|I|}    \sum_{j = 1} ^\infty \sum_{K \in \J^{j - 1} (I)} \int_K \left( \sum_{\varepsilon \in \S} \sum_{J \in \D(K)} \frac{\|V_K B_J ^\varepsilon V_K ^{-1} \|^2 }{|J|} 1_J (x) \right) ^\frac{q}{2} \, dx. \label{EstTwo} \end{align}  However by \eqref{BAssump} we have that  \begin{align*} (\ref{EstTwo})  & \lesssim \frac{\|W\|_{\text{A}_p} ^\frac{q}{p}}{|I|}   \sum_{j = 1} ^\infty \sum_{K \in \J^{j - 1} (I)} |K|   \lesssim \|W\|_{\text{A}_p} ^\frac{q}{p'} \sum_{j = 1}^\infty 2^{-(j - 1)} < \infty \end{align*} which proves \eqref{Btoprove}. \end{proof}

We can now finally prove
 \begin{lemma} Let $W$ be a matrix A${}_p$ weight.  If $T$ is a $(W, p)$-CZO that is bounded on $L^p(W)$ then $T1 \in \BMOW$ and $T^* 1 \in \BMOWq$.
 \end{lemma}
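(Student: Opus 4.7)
The plan is to deduce the lemma from Lemma \ref{JNlemma} combined with a classical-style local/global decomposition. First, because $T$ is a $(W,p)$-CZO bounded on $L^p(W)$ if and only if $T^*$ is a $(W^{1-p'}, p')$-CZO bounded on $L^{p'}(W^{1-p'})$, it suffices to prove $T1 \in \BMOW$; the statement $T^*1 \in \BMOWq$ then follows by applying the same reasoning with $(T, W, p)$ replaced by $(T^*, W^{1-p'}, p')$. By Lemma \ref{JNlemma} applied with weight $W$, proving $T1 \in \BMOW$ further reduces to producing some $q > 1$ with
\begin{equation*}
\sup_{\substack{I \subseteq \Rd \\ I \text{ is a cube}}} \frac{1}{|I|} \int_I \|V_I(T1(x) - m_I T1) V_I^{-1}\|^q \, dx < \infty.
\end{equation*}

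Fix a cube $I$ and set $I^* = 2\sqrt{d}\,I$. Using \eqref{T1IntDef} with $Q = I$, I would decompose $T1 - m_I T1$ (modulo an irrelevant constant) into a local piece involving $T(1_{I^*})$ and a global piece involving $T(1_{(I^*)^c})$. For the global piece, the $V_I$-conjugated H\"{o}lder estimate on $K$ gives, for $x, x' \in I$,
\begin{equation*}
\|V_I [T(1_{(I^*)^c})(x) - T(1_{(I^*)^c})(x')] V_I^{-1}\| \lesssim \int_{(I^*)^c} \frac{|x-x'|^\alpha}{|x-y|^{d+\alpha}} \, dy \lesssim 1,
\end{equation*}
and averaging over $x' \in I$ absorbs the mean, yielding a pointwise $O(1)$ bound. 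For the local piece I would test $T$ on the vectors $\vec{g}_j := 1_{I^*} V_I^{-1}\vec{e}_j$; matrix A${}_p$ doubling (inherited from the scalar A${}_p$ weights $|W^{1/p}\vec{e}|^p$) gives $\|\vec{g}_j\|_{L^p(W)} \lesssim |I|^{1/p}$, so the assumed $L^p(W)$-boundedness of $T$ yields $\int_I \|W^{1/p}(x) T(1_{I^*})(x) V_I^{-1}\|^p \, dx \lesssim |I|$ after summing over $j$.

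To convert the $W^{1/p}(x)$ on the left into the $V_I$ demanded by Lemma \ref{JNlemma}, I would use the pointwise inequality $\|V_I A V_I^{-1}\| \leq \|V_I W^{-1/p}(x)\|\,\|W^{1/p}(x) A V_I^{-1}\|$ together with H\"{o}lder's inequality, the constant-in-$x$ term $V_I m_I T(1_{I^*}) V_I^{-1}$ being controlled by Jensen applied to the same estimate. The main obstacle is that the raw matrix A${}_p$ integrability $\int_I \|V_I W^{-1/p}\|^{p'} \, dx \lesssim |I|$ only sustains H\"{o}lder with exponent $q = 1$. To push past this, I would observe that each scalar function $|W^{-1/p}(x)\vec{e}|^{p'}$ is an A${}_{p'}$ weight with constant uniform in $\vec{e}$, so scalar reverse H\"{o}lder upgrades the endpoint bound to $\int_I \|V_I W^{-1/p}\|^{p'(1+\delta)} \, dx \lesssim |I|$ for some $\delta > 0$; H\"{o}lder with this improved integrability then yields the desired estimate with $q = pp'(1+\delta)/(p + p'(1+\delta)) > 1$, and Lemma \ref{JNlemma} concludes that $T1 \in \BMOW$.
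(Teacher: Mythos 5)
Your proposal is correct and follows essentially the same route as the paper's proof: duality to reduce to $T1$, a local/global split via $T(1_{I^*})$ and a regularized tail, testing on $1_{I^*}V_I^{-1}\vec{e}_j$ together with $L^p(W)$-boundedness and reverse H\"{o}lder for the local piece, the conjugated kernel cancellation estimate for the tail, and Lemma \ref{JNlemma} to conclude. The only cosmetic difference is that the paper organizes the local estimate via $L^{p_\epsilon'}(dm_I)$-duality and then applies H\"{o}lder inside the dual norm, whereas you first establish the $L^p$ bound and then H\"{o}lder down to an exponent $q>1$; these amount to the same computation.
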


Note that by duality it is enough to show that $T1 \in \BMOW$ when $W$ be a matrix A${}_p$ weight and $T$ is a $(W, p)$-CZO that is bounded on $L^p(W)$.

\begin{proof}   By the reverse H\"{o}lder's inequality we can pick (and fix) $\epsilon'$ such that \begin{equation*} \sup_{\substack{I \subset \R^d \\  I \text{ is a cube}}} \frac{1}{|I|} \int_I \|W^{-\frac{1}{p}} (x) V_I\|^{p' + \epsilon'} \, dx < \infty. \end{equation*}  Let $\epsilon > 0$, which will be determined momentarily and let $p_\epsilon = p + \epsilon$.   Combined with Lemma \ref{JNlemma}, the following lemma will complete the proof of necessity.

Now for any dyadic grid $\D$ and $I, Q \in \D$ with $Q \subseteq I$ we have by \eqref{T1IntDef} and the fact that $T$ is bounded on $L^p(W)$ that \begin{align*} \ip{T1}{h_Q ^\varepsilon} & = \ip{T 1_{I^*}}{h_Q ^\varepsilon}_{L^2}  + \int_Q  \left(\int_{\Rd \backslash I^*} [K(x, y) - K(c_I, y)]  \, dy \right)\, h_Q ^\varepsilon (x) dx   \\ & = \ip{F_{I, 1}}{h_Q ^\varepsilon}_{L^2} + \ip{F_{I, 2}}{h_Q ^\varepsilon}_{L^2} \end{align*} where \begin{equation*} F_{I, 1} (x)  = T 1_{I^*} (x), \ \ \ \ \ F_{I, 2} (x) = 1_I (x) \int_{\Rd \backslash I^*} [K(x, y) - K(c_I, y)]  \, dy \end{equation*}  Then by \eqref{LpEmbedding},  Lemma \ref{JNlemma}, and (unweighted) dyadic Littlewood Paley theory, our definition of $T1 \in L^p(W)$ tells that it is enough to prove that  \begin{equation*} \sup_{\substack{I \subset \R^d \\  I \text{ is a cube}}}\,  \frac{1}{|I|} \int_I \| V_I (F_{I, i} (x) )V_I ^{-1} \|^{p_\epsilon} \, dx < \infty \end{equation*} for $i = 0, 1$.  Let us denote these two supremums by $(1)$ and $(2)$.

We first estimate $(1)$ as follows. Fix some vector $\vec{e} \in \C$.  Then by (unweighted) duality with respect to the measure $dm_I = |I|^{-1} 1_I (x) \, dx$ and the linearity of $T$ \begin{align*}   \left(\frac{1}{|I|} \int_I  \abs{V_I T(1_{I^*}) (x) V_I ^{-1} \vec{e} } ^{p_\epsilon}  \, dx\right)^\frac{1}{p_\epsilon} & =  \sup_{\|\vec{g}\|_{L^{p_\epsilon ' }(dm_I)} = 1} \abs{\ip{  T(1_{I^*}V_I ^{-1} \vec{e} )}{ V_I  \vec{g}}_{L^2 (dm_I)}} \\ & = \sup_{\|\vec{g}\|_{L^{p_\epsilon '}(dm_I)} = 1} \frac{1}{|I|} \abs{\ip{  T(1_{I^*}V_I ^{-1} \vec{e} ) }{ V_I 1_I \vec{g}}_{L^2 }} \end{align*} where $p_\epsilon ' =  \frac{1 + \epsilon}{\epsilon}. $  However, assuming that $T$ is bounded on $L^p(W)$, we have that \begin{equation*} \frac{1}{|I|} \abs{ \ip{  T(1_{I^*}V_I ^{-1} \vec{e})  }{  V_I 1_I \vec{g} }_{L^2} } \leq  \frac{1}{|I|}\|1_{I^*} V_I ^{-1} \vec{e} \|_{L^p(W)} \|  1_I   V_I \vec{g} \|_{L^{p'} (W ^{1 - p'})}. \end{equation*} For the first term notice that a standard A${}_p$ weight argument with easy modifications in the matrix case shows that $|V_{I^*} \vec{e}| \lesssim |V_I \vec{e}|$ which means that the first term is bounded independent of $I$.

As for the second term, let $q_\epsilon = \frac{1 + \epsilon}{1 + \epsilon - \epsilon p'}$ so if $q_\epsilon '$ is the conjugate exponent of $q_\epsilon$ then we have that $p ' q_\epsilon ' = (1 + \epsilon)/\epsilon = p_\epsilon '$.  Thus, H\"{o}lder's inequality gives us that \begin{align*} |I| ^{-\frac{1}{p'}} \|  1_I   V_I \vec{g} \|_{L^{p'} (W ^{1 - p'})} & = \left( \frac{1}{|I|} \int_I |W^{-\frac{1}{p} } (x) V_I \vec{g} (x) |^{ p'} \, dx \right) ^\frac{1}{p'} \\ & \leq \left( \frac{1}{|I|} \int_I \|W^{-\frac{1}{p}} (x) V_I \| ^{q_\epsilon p'} \, dx \right)^\frac{1}{q_\epsilon p'} \left( \frac{1}{|I|} \int_I |\vec{g}(x)| ^{q_\epsilon ' p' } \, dx \right)^\frac{1}{q_\epsilon ' p ' } \\ & =  \left( \frac{1}{|I|} \int_I \|W^{-\frac{1}{p}} (x) V_I \| ^{q_\epsilon p'} \, dx \right)^\frac{1}{q_\epsilon p'}.\end{align*} Picking $\epsilon > 0$ small enough so that \begin{equation*} q_\epsilon p' = \frac{p'(1 + \epsilon)}{1 + \epsilon - \epsilon p'} \leq p' + \epsilon' \end{equation*} in conjunction with the reverse H\"{o}lder inequality gives us that
\begin{equation*} \sup_I |I| ^{-\frac{1}{p'}} \|  1_I   V_I \vec{g} \|_{L^{p'} (W ^{p' - 1})} < \infty \end{equation*}

 Finally estimating $(2)$ is easy, since by assumption $V_I K(x, y) V_I ^{-1}$ satisfies the standard CZ kernel estimates uniformly with respect to $I$.  Thus,  we have that $(2)$ is finite with constant independent of $I$ which completes the proof.

\end{proof}

We will end this section with the proof that $B, B^* \in \BMOW$ implies that $B \in \text{BMO}$.  In particular, using the fact that the trace and matrix norms are equivalent, we have \begin{align*} \|B\|_{\text{BMO}} ^2  & = \sup_{J \in \D} \, \frac{1}{|J|}  \sum_{\varepsilon \in \S} \sum_{I \in \D(J)} \|(B_I^\varepsilon) ^* B_I ^\varepsilon \|  \\ & \approx \sup_{J \in \D} \, \frac{1}{|J|}  \sum_{\varepsilon \in \S}  \sum_{I \in \D(J)}  \tr (B_I ^\varepsilon) ^* B_I ^\varepsilon  \\ & =
\sup_{J \in \D} \, \frac{1}{|J|}  \sum_{\varepsilon \in \S}  \sum_{I \in \D(J)}  \tr (V_I (B_I ^\varepsilon) ^* V_I ^{-1}) (V_I  B_I ^\varepsilon V_I^{-1})
\\ & \leq \sup_{J \in \D} \, \frac{1}{|J|}  \sum_{\varepsilon \in \S}  \sum_{I \in \D(J)}  \|V_I (B_I ^\varepsilon) ^* V_I ^{-1}\|  \,  \|V_I  B_I ^\varepsilon V_I^{-1}\| \\ & \leq \|B\|_{\W{\BMOW}} \|B^*\|_{\W{\BMOW}} \end{align*}
where \begin{equation*}\|B\|_{\W{\BMOW}} = \left( \sup_{J \in \D} \, \frac{1}{|J|}  \sum_{\varepsilon \in \S}  \sum_{I \in \D(J)}  \|V_I B_I ^\varepsilon V_I ^{-1}\| ^2 \right)^\frac12 \approx \|B\|_{\BMOW}  \end{equation*} when $W$ is a matrix A${}_p$ weight (see \cite{IKP}).

\section{Matrix weighted John-Nirenberg theorem}

We will now prove Theorem \ref{MWMatrixThm}.  To shorten the proof, we will first prove the following lemma     which is elementary yet interesting in its own right.

\begin{lemma}  \label{EmbeddingLem} Let $\{\V{\lambda}_I ^\varepsilon\}_{\{I \in \D, \varepsilon \in \S\}}$ be a Carleson sequence of vectors.  That is,  \begin{equation*}\|\lambda\|_* ^2 =  \sup_{J \in \D} \frac{1}{|J|} \sum_{\varepsilon \in \S} \sum_{I \in \D(J)} |\V{\lambda}_I ^\varepsilon |^2  < \infty. \end{equation*}  Then if $W$ is a matrix A${}_p$ weight and $B$ is any locally integrable $\Mn$ valued function, we have that \begin{equation*} \inrd \left(\sum_{\varepsilon \in \S} \sum_{I \in \D} \frac{|m_I ( B W^{- \frac{1}{p}}  )  V_I \V{\lambda}_I ^\varepsilon  |^2}{|I|} 1_I (x) \right)^\frac{p}{2} \, dx \lesssim \|B\|_{L^{p}} ^p\end{equation*} \end{lemma}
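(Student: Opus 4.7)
The plan is to reduce the matrix-valued estimate to a vector-valued one by applying Cauchy--Schwarz to the row decomposition of $B$, and then to invoke a matrix-weighted Carleson embedding that is the $L^p$ square-function counterpart of the Christ--Goldberg matrix-weighted maximal function bound.

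First I would fix $k\in\{1,\ldots,n\}$ and let $\V{u}_k(x):=B(x)^{*}\V{e}_k$ be the $k$-th row of $B$, viewed as a column vector in $\C$. Since both $V_I$ and $W^{-1/p}$ are Hermitian, a short computation shows that the $k$-th coordinate of $m_I(BW^{-1/p})V_I\V{\lambda}_I^\varepsilon$ equals $\ip{\V{\lambda}_I^\varepsilon}{V_I m_I(W^{-1/p}\V{u}_k)}$, so by Cauchy--Schwarz in $\C$,
\[
|m_I(BW^{-1/p})V_I\V{\lambda}_I^\varepsilon|^2 \ \leq\ |\V{\lambda}_I^\varepsilon|^2 \sum_{k=1}^n |V_I m_I(W^{-1/p}\V{u}_k)|^2.
\]

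The key input is then the following matrix-weighted Carleson embedding: for any Carleson sequence of nonnegative scalars $(\alpha_I^\varepsilon)$ and any $\V{u}\in L^p(\Rd;\C)$,
\[
\inrd \left(\sum_{I,\varepsilon}\alpha_I^\varepsilon\frac{|V_I m_I(W^{-1/p}\V{u})|^2}{|I|}1_I\right)^{p/2}\,dx \ \lesssim\ \|\alpha\|_*^{p/2}\|\V{u}\|_{L^p}^p.
\]
This is the $L^p$ square-function analog of the Christ--Goldberg matrix maximal inequality $\|\sup_I|V_I m_I(W^{-1/p}\V{u})|1_I\|_{L^p}\lesssim\|\V{u}\|_{L^p}$. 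For $p\geq 2$ it can be proved by duality: one tests the $L^{p/2}$ norm against $\phi\geq 0$ with $\|\phi\|_{L^{(p/2)'}}=1$, then H\"{o}lder's inequality in the index $I$ splits the sum into two Carleson embeddings, one of exponent $p$ applied to the matrix maximal function (via a Carleson-measure argument on the upper half-space together with the Christ--Goldberg bound) and one of exponent $(p/2)'$ applied to $\phi$ (the classical scalar Carleson embedding).

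Applying this matrix-weighted Carleson embedding with $\alpha_I^\varepsilon=|\V{\lambda}_I^\varepsilon|^2$ (which is Carleson with $\|\alpha\|_*=\|\lambda\|_*^2$) to each $\V{u}_k$, combining with the first step, and pulling the finite sum over $k$ outside the $L^{p/2}$ norm via the elementary inequality $(\sum_{k=1}^n f_k)^{p/2}\lesssim_{n,p}\sum_k f_k^{p/2}$ (Minkowski when $p\geq 2$, subadditivity of $x\mapsto x^{p/2}$ when $p<2$), together with the equivalence $\sum_{k=1}^n|\V{u}_k(x)|^p\approx\|B(x)\|^p$, yields the desired bound.

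The main obstacle is the matrix-weighted Carleson embedding: while it is the natural square-function counterpart of Christ--Goldberg, writing out its proof requires some care, especially for $p<2$ where the duality step would have to be replaced by a good-$\lambda$ argument. An alternative, more self-contained route would be to invoke the stopping time from Section 3 to freeze $V_I\approx V_K$ on each region $\F(K)$, reducing the estimate on $\F(K)$ to the classical scalar Carleson embedding applied to the function $\|BW^{-1/p}V_K\|$ on $K$; the remaining sum over principal cubes would then be controlled by the matrix A${}_p$ condition (via reverse H\"{o}lder) and the sparseness $|E(K)|\gtrsim|K|$ of principal cubes.
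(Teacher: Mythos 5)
Your opening reduction is sound: the Cauchy--Schwarz bound
\[
|m_I(BW^{-1/p})V_I\V{\lambda}_I^\varepsilon|^2 \leq |\V{\lambda}_I^\varepsilon|^2 \sum_{k=1}^n |V_I m_I(W^{-1/p}\V{u}_k)|^2
\]
is correct, and pulling the finite sum over $k$ outside via $(\sum_k f_k)^{p/2}\lesssim_n\sum_k f_k^{p/2}$ is fine. The genuine gap is that the whole weight of the argument is then placed on the ``matrix-weighted Carleson embedding''
\[
\inrd\left(\sum_{I,\varepsilon}\alpha_I^\varepsilon\frac{|V_I m_I(W^{-1/p}\V{u})|^2}{|I|}1_I\right)^{p/2}dx \lesssim \|\alpha\|_*^{p/2}\|\V{u}\|_{L^p}^p,
\]
which is never proved. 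Your sketch for $p\geq 2$ (dualize against $\phi\in L^{(p/2)'}$, then ``H\"older in $I$'' to split into two scalar Carleson embeddings) is too vague to constitute a proof --- after dualizing one faces $\sum_{I,\varepsilon}\alpha_I^\varepsilon\,(m_I(M_W'\V{u}))^2\,m_I\phi$, a product of three different $I$-averages, and it takes a nontrivial step (AM--GM with a separate homogenization, or otherwise) to land on two Carleson embeddings; this needs to be written out. For $p<2$ you acknowledge that the argument does not apply and defer to an unspecified good-$\lambda$ argument, so half the range of $p$ is simply missing.

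The paper's route avoids all of this and works uniformly in $1<p<\infty$, and you should note how much it simplifies things. After the elementary bound $|m_I(BW^{-1/p})V_I\V{\lambda}_I^\varepsilon|\leq |\V{\lambda}_I^\varepsilon|\,m_I\|V_IW^{-1/p}B^*\|\leq |\V{\lambda}_I^\varepsilon|\,m_I(M_W'B^*)$, where $M_W'B^*(x):=\sup_{\D\ni I\ni x}m_I\|V_IW^{-1/p}B^*\|$ is a \emph{scalar} maximal function, the quantity
\[
\sum_{\varepsilon\in\S}\sum_{I\in\D}\frac{(|\V{\lambda}_I^\varepsilon|\,m_I(M_W'B^*))^2}{|I|}1_I(x)
\]
is precisely the dyadic square function of $\pi_{\tilde A}(M_W'B^*)$, where $\tilde A=\sum_{\varepsilon,I}|\V{\lambda}_I^\varepsilon|h_I^\varepsilon$ lies in dyadic BMO with $\|\tilde A\|_{\mathrm{BMO}}\approx\|\lambda\|_*$. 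Hence by Littlewood--Paley theory and the classical $L^p$ boundedness of scalar paraproducts with BMO symbol, the integral in question is $\lesssim\|\lambda\|_*^p\,\|M_W'B^*\|_{L^p}^p$, and one finishes with the Christ--Goldberg bound $\|M_W'B^*\|_{L^p}\lesssim\|B\|_{L^p}$. In other words, the ``key input'' you need is exactly the classical scalar Carleson embedding theorem (in its paraproduct form), and identifying the paraproduct is the observation that closes the argument without any $p<2$ complications; your stopping-time alternative would also work but is not written out either.
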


\begin{proof}The proof is similar to the proof of the ``matrix weighted Carleson embedding theorem" in \cite{IKP}.  We  will show that   \begin{align} \int_{\Rd} & \left( \sum_{\varepsilon \in \S} \sum_{I \in \D} \frac{ | m_I(B W^{-\frac{1}{p}}) V_I \vec{\lambda}_I ^\varepsilon | ^2 }{|I|} \chi_I(t) \right)^\frac{p}{2} \, dt
\nonumber \\ & \leq \int_{\Rd}  \left( \sum_{\varepsilon \in \S} \sum_{I \in \D} \frac{ (|\vec{\lambda}_I ^\varepsilon  | m_I \|V_I W^{-\frac{1}{p}} B^* \|)^2 }{|I|} \chi_I(t) \right)^\frac{p}{2} \, dt \label{CarEmbedParEst}\\ & \lesssim \|B\|_{L^p} ^p \nonumber  \end{align} for any $B \in L^p(\Rd;\C)$.

Now let \begin{equation*} \tilde{A} = \sum_{\varepsilon \in \S} \sum_{I \in \D} |\vec{\lambda}_I ^\varepsilon  | h_I ^\varepsilon \end{equation*} and let \begin{equation*} M_W ' B^* (x) = \sup_{\D \ni I \ni x}  m_I \|V_I W^{-\frac{1}{p}} B^*\| \end{equation*} Clearly for any $\D \ni I \ni x$ we have that \begin{equation*} m_I \|V_I W^{-\frac{1}{p}} B^*\| \leq m_I ( M_W ' B^*) \end{equation*} so that \begin{align*}  \eqref{CarEmbedParEst} & \leq \int_{\Rd}  \left( \sum_{\varepsilon \in \S} \sum_{I \in \D} \frac{ (|\vec{\lambda}_I ^\varepsilon | m_I( M_W ' B^*))^2 }{|I|} \chi_I(t) \right)^\frac{p}{2} \, dt
\\ & \lesssim \|\pi_{\tilde{A}} (M_W ' B^*)\|_{L^p} \\ &  \lesssim \|\lambda\|_* \|M_W ' B^*\|_{L^p}. \end{align*}

However, it is easy to see that \begin{equation*} \|M_W ' \|_{L^p \rightarrow L^p} \lesssim \|W\|_{\text{A}_p} ^{\frac{1}{p - 1}} \end{equation*} by using some simple ideas from \cite{G} (see \cite{IKP}).   \end{proof}

We can now prove Theorem \ref{MWMatrixThm}. \\

\noindent \textit{Proof of Theorem} \ref{MWMatrixThm}. By standard arguments it is enough to consider the supremums in Theorem \ref{MWMatrixThm} over some fixed dyadic lattice $\D$ and prove bounds independent of the choice of $\D$.   First assume that (\ref{VectorBMO1}) is true and note that by H\"{o}lder's inequality that we can obviously assume $1 < q \leq 2$.  Then by dyadic Littlewood-Paley theory, \begin{equation*} \sup_{J \in \D} \frac{1}{|J|} \int_J \left(\sum_{\varepsilon \in \S} \sum_{I \in \D(J)} \frac{| V_J ^{-1}  \vec{f}_I ^\varepsilon |^2}{|I|} 1_I(x) \right)^\frac{q}{2} \, dx < \infty   \end{equation*}

However, by \eqref{LpEmbedding} applied to $W^{1 - p'}$ and Theorem $3.1$ in \cites{NTV}, we need to prove that \begin{equation*} \sup_{J \in \D} \frac{1}{|J|} \int_J \left(\sum_{\varepsilon \in \S} \sum_{I \in \D(J)} \frac{| V_I '  \vec{f}_I ^\varepsilon|^2}{|I|} 1_I(x) \right)^\frac{q}{2} \, dx  < \infty \end{equation*}

To that end, by the matrix A${}_p$ property we have \begin{align*}  \sup_{J \in \D} \, \frac{1}{|J|} \int_J & \left(\sum_{\varepsilon \in \S} \sum_{I \in \D(J)} \frac{| V_I '  \vec{f}_I ^\varepsilon |^2}{|I|} 1_I(x) \right)^\frac{q}{2} \, dx \\ & \lesssim \sup_{J \in \D} \,  \frac{1}{|J|}  \int_J \left(\sum_{j = 1}^\infty \sum_{\varepsilon \in \S} \sum_{K \in \J^{j - 1}(J)} \sum_{I \in \F(K) } \frac{\| V_I'  V_K   \|   | V_K ^{-1} \vec{f}_I ^\varepsilon|^2}{|I|} 1_I(x) \right)^\frac{q}{2} \, dx \\ &  \lesssim  \sup_{J \in \D} \,  \frac{1}{|J|}  \int_J \left(\sum_{j = 1}^\infty \sum_{\varepsilon \in \S} \sum_{K \in \J^{j - 1}(J)} \sum_{I \in \D(K) } \frac{| V_K ^{-1} \vec{f}_I ^\varepsilon |^2}{|I|} 1_I(x) \right)^\frac{q}{2} \, dx \\ & \leq  \sup_{J \in \D} \,  \frac{1}{|J|}  \sum_{j = 1}^\infty \sum_{K \in \J^{j - 1}(J)} \frac{1}{|J|} \int_K \left( \sum_{\varepsilon \in \S} \sum_{I \in \D(K) } \frac{| V_K ^{-1} \vec{f}_I ^\varepsilon|^2}{|I|} 1_I(x) \right)^\frac{q}{2} \, dx \\ & \lesssim  \sup_{J \in \D} \,  \frac{1}{|J|}  \sum_{j = 1}^\infty \sum_{K \in \J^{j - 1}(J)} |K|  < \infty \end{align*} where in the second to last inequality we use \eqref{VectorBMO1} in conjunction with dyadic Littlewood-Paley theory.

For the converse, \eqref{LpEmbedding} applied to $W^{1 - p'}$ and the matrix A${}_p$ condition gives us that  \eqref{VectorBMO2} is equivalent to \begin{align*} \sup_{J \in \D} \frac{1}{|J|} \, \int_J & \left( \sum_{\varepsilon \in \S} \sum_{I \in \D(J)} \frac{|V_I ^{-1} \V{f}_I ^\varepsilon |^2}{|I|} \, 1_I (x)  \right)^\frac{p'}{2} \, dx \\ & \approx \sup_{J \in \D} \frac{1}{|J|} \, \int_J \left( \sum_{\varepsilon \in \S} \sum_{I \in \D(J)} \frac{|V_I ' \V{f}_I ^\varepsilon|^2}{|I|} \, 1_I (x)  \right)^\frac{p'}{2} \, dx \\ & < \infty.   \end{align*}  Thus,  by Theorem $3.1$ in \cite{NTV}  we can assume that $\{V_I ^{-1}  \vec{f}_I ^\varepsilon\}_{\{I \in \D, \varepsilon \in \S\}} $ is a Carleson sequence. Now if $J$ is fixed, $I \subseteq J$,  and $B = V_J ^{-1} W^\frac{1}{p} 1_J $, then
\begin{align*}  |V_J ^{-1} \V{f}_I ^\varepsilon| = |V_J ^{-1} V_I V_I ^{-1} \V{f}_I ^\varepsilon| = |m_I (B W^{-\frac{1}{p}} ) V_I (V_I ^{-1} \V{f}_I ^\varepsilon)| \end{align*} so that \begin{align*} \sup_{J \in \D} & \frac{1}{|J|} \int_J \left(\sum_{\varepsilon \in \S} \sum_{I \in \D(J)} \frac{| V_J ^{-1}  \vec{f}_I ^\varepsilon|^2}{|I|} 1_I(x) \right)^\frac{p}{2} \, dx  \\ & \leq \sup_{J \in \D}  \frac{1}{|J|} \inrd \left(\sum_{\varepsilon \in \S} \sum_{I \in \D} \frac{|m_I (B W^{-\frac{1}{p}} ) V_I (V_I ^{-1} \V{f}_I ^\varepsilon)|^2}{|I|} 1_I(x) \right)^\frac{p}{2} \, dx \\ & \lesssim \sup_{J \in \D}  \frac{1}{|J|} \|V_J ^{-1} W^\frac{1}{p} 1_J \|_{L^p}^p  < \infty  \end{align*} by Lemma \ref{EmbeddingLem}.  \hfill $\square$

We will finish off the paper with a proof of Proposition \ref{CommProp}. \\

\noindent \textit{Proof of Proposition} \ref{CommProp} First we prove \eqref{VectorBMO1} for $q = p$ and $\vec{f}$ replaced by $B$.   By assumption, there exists $z_0 \neq 0 $  and $\delta > 0$ where $\frac{1}{K(x)}$ is smooth on $|x - z_0| < \sqrt{d} \delta$, and thus can be expressed as an absolutely convergent Fourier series \begin{equation*} \frac{1}{K(x)} = \sum a_k e^{i v_k \cdot x} \end{equation*} \noindent for $|x - z_0| < \sqrt{d} \delta$ (where the exact nature of the vectors $v_k$ is irrelevant.)  Set $z_1 = \delta^{-1} z_0$.  Thus, if $|x - z_1| < \sqrt{d}$, then we have by homogeneity

\begin{equation*} \frac{1}{K(x)} = \frac{\delta^{-d}}{K(\delta x)} = \delta^{-d} \sum a_n e^{i v_k \cdot (\delta x)} \end{equation*} \noindent Now for any cube $Q = Q(x_0, r)$ of side length $r$ and center $x_0$, let $y_0 = x_0 - rz_1$ and $Q' = Q(y_0, r)$ so that $x \in Q$ and $y \in Q'$ implies that \begin{equation*} \left|\frac{x - y}{r} - z_1\right|  \leq \left| \frac{x - x_0}{r} \right| + \left| \frac{y - y_0}{r} \right|\leq \sqrt{d}. \end{equation*}

Let \begin{equation*} S_Q (x) = \chi_Q (x) \frac{((B^*(x) - m_{Q'} B^* ) V_Q ^{-1} )^* } {\| (B^*(x) - m_{Q'} B^* ) V_Q ^{-1} \|} \end{equation*}  so that \begin{align}\frac{1}{r^d} &  \left\|\inrd  (B^*(x) - B^*(y)) V_Q ^{-1} \frac{r^d K(x - y)}{K(\frac{x-y}{r})}  S_Q (x)  \chi_{Q'} (y) \, dy  \right\| \label{CommEst1} \\ & = \chi_Q (x)  \frac{1}{r^d} \left\| \int_{Q'}   (B^*(x) - B^*(y)) V_Q ^{-1}  \frac{(  (B^*(x) - m_{Q'} B^* ) V_Q ^{-1}  )^* }{\|  (B^*(x) - m_{Q'} B^* ) V_Q ^{-1} \|}  \, dy \right\| \nonumber \\ & = \chi_Q (x) \left\|\frac{  (B^*(x) - m_{Q'} B^* ) V_Q ^{-1} (  (B^*(x) - m_{Q'} B^* ) V_Q ^{-1}  )^*}{\|   (B^*(x) - m_{Q'} B^* ) V_Q ^{-1}\|} \right\| \nonumber \\ & = \chi_Q (x) \| (B^*(x) - m_{Q'} B^* ) V_Q ^{-1}\| \nonumber \end{align}

However, \begin{align*} \eqref{CommEst1} &  \lesssim \sum_k |a_k| \left\|  \left(\inrd (B^*(x) - B^*(y)) K(x - y)  e^{- i \frac{\delta}{r} v_k \cdot y} V_Q ^{-1}  \chi_{Q'} (y) \, dy \right)S_Q (x)  e^{ i \frac{\delta}{r} v_k \cdot x} \right\|  \\ & \leq  \sum_k  |a_k| \left\|  \left(\inrd (B^*(x) - B^*(y)) K(x - y)  e^{- i \frac{\delta}{r} v_k \cdot y} V_Q ^{-1}  \chi_{Q'} (y) \, dy \right) \right\|
 \\ & \lesssim  \sum_k  \sum_{j = 1}^n |a_k| \left\|  ([T, B^*] (g_k \vec{e}_j)) (x)  \right\| \end{align*} where \begin{equation*} g_k (y) =   e^{ i \frac{\delta}{r} v_k \cdot y } V_Q ^{-1} \chi_{Q'} (y) \end{equation*}  and where the second inequality follows from the fact that $\| S_Q(x)  e^{ i \frac{\delta}{r} v_k \cdot x }\| \leq 1$ for a.e. $x \in \Rd$.

But as $|x_0 - y_0 | = r \delta^{-1} z_0$, we can pick some $C > 1$ only depending on $K$ where  $\tilde{Q} = Q(x_0, C r) $ satisfies $Q \cup Q' \subseteq \tilde{Q}$.  Combining this with the previous estimates, we have from the absolute summability of the $a_n's $ and the boundedness of $[T, B^*]$ from $L^p(W)$ to $L^p$ that \begin{align*} \left(\int_{Q} \|  V_Q ^{-1} (B(x) - m_{Q'} B ) \| ^p \, dx \right) ^{\frac{1}{p}} & = \left(\int_{Q} \|   (B^*(x) - m_{Q'} B^* ) V_Q ^{-1} \| ^p \, dx \right) ^{\frac{1}{p}} \\ & \leq \sum_k \sum_{j = 1}^n |a_k| \|  [T, B^*] (g_k \vec{e}_j)  \|_{L^p}  \\ & \lesssim   \sup_n  \sum_{j = 1}^n \| W^{\frac{1}{p}} g_k \vec{e_j} \|_{L^p } \\ & \leq \sum_{j = 1}^n \|\chi_{Q'}  W^{\frac{1}{p}} V_Q ^{-1} \vec{e}_j\|_{L^p} \\ & \lesssim   |Q|^\frac{1}{p} \|W\|_{\text{A}_p} ^\frac{1}{p} \end{align*} since the A${}_p$ condition gives us that \begin{align*}  \sum_{j = 1}^n \||Q|^{-\frac{1}{p}}\chi_{Q'}  W^{\frac{1}{p}} V_Q ^{-1} \V{e}_j \|_{L^p} & \lesssim   \sum_{j = 1}^n \||\tilde{Q}|^{-\frac{1}{p}} \chi_{\tilde{Q}} V_Q ^{-1} W^{\frac{1}{p}}  \V{e}_j \|_{L^p}  \lesssim  \sum_{j = 1}^n \| |\tilde{Q}|^{-\frac{1}{p}} \chi_{\tilde{Q}} V_{\tilde{Q}} '  W^{\frac{1}{p}}  \vec{e}_j \ \|_{L^p} \\ & \lesssim \|V_{\tilde{Q}}  V_{\tilde{Q}} ' \|  \lesssim \|W\|_{\text{A}_p} ^\frac{1}{p} \end{align*} A standard argument now shows \eqref{VectorBMO1} for $q = p$ and $\vec{f}$ replaced by $B$.

To prove \eqref{VectorBMO2} for $\vec{f}$ replaced by $B$, note that $[T, B^*]$ is bounded from $L^p(W)$ to $L^p$ if and only if $[T, B^*] W^{-\frac{1}{p}}$ is bounded on $L^p$ if and only if $W^{-\frac{1}{p}}[T^*, B]$ is bounded on $L^{p'}$.

Now let \begin{equation*} \W{S}_Q (x) = \chi_Q (x) \frac{(W^{-\frac{1}{p}} (x)  (B(x) - m_{Q'} B )  )^* }{\|W^{-\frac{1}{p}} (x)  (B(x) - m_{Q'} B )  \|}\end{equation*}  so that \begin{align}\frac{1}{r^d} &  \left\|\inrd  W^{-\frac{1}{p}} (x) (B(x) -B(y))  \frac{r^d \overline{K(x - y)}}{\overline{K(\frac{x-y}{r})}}  \W{S}_Q (x)  \chi_{Q'} (y) \, dy  \right\| \label{CommEst2} \\ & = \chi_Q (x)  \frac{1}{r^d} \left\| \int_{Q'}  W^{-\frac{1}{p}} (x) (B(x) -B(y))   \frac{(W^{-\frac{1}{p}} (x)  (B(x) - m_{Q'} B )  )^* }{\|W^{-\frac{1}{p}} (x)  (B(x) - m_{Q'} B )  \|}  \, dy \right\| \nonumber \\ & = \chi_Q (x) \left\|\frac{ W^{-\frac{1}{p}} (x)  (B(x) - m_{Q'} B ) (W^{-\frac{1}{p}} (x)  (B(x) - m_{Q'} B )   )^*}{\| W^{-\frac{1}{p}} (x)  (B(x) - m_{Q'} B ) \|} \right\| \nonumber \\ & = \chi_Q (x) \| W^{-\frac{1}{p}} (x)  (B(x) - m_{Q'} B ) \| \nonumber \end{align}

However, \begin{align*} \eqref{CommEst2} &  \leq \sum_k |a_k| \left\|  W^{-\frac{1}{p}} (x) \left(\inrd (B(x) - B(y)) \overline{K(x - y)}  e^{ i \frac{\delta}{r} v_k \cdot y}   \chi_{Q'} (y) \, dy \right)\W{S}_Q (x)  e^{ -i \frac{\delta}{r} v_k \cdot x} \right\|  \\ & \leq  \sum_k |a_k| \left\|  W^{-\frac{1}{p}} (x) \left(\inrd (B(x) - B(y)) \overline{K(x - y)}  e^{ i \frac{\delta}{r} v_k \cdot y}   \chi_{Q'} (y) \, dy \right) \right\|
 \\ & \lesssim  \sum_k  \sum_{j = 1}^n |a_k| \left\|  W^{-\frac{1}{p}} (x) ([T^*, B] (\W{g}_k \vec{e}_j)) (x)  \right\| \end{align*} where \begin{equation*} \W{g}_k (y) =   e^{- i \frac{\delta}{r} v_k \cdot y }  \chi_{Q'} (y) \end{equation*}  and where the second inequality follows from the fact that $\| \W{S}_Q(x)  e^{ i \frac{\delta}{r} v_k \cdot x }\| \leq 1$ for a.e. $x \in \Rd$.

Then again we have \begin{align*} \left(\int_{Q} \|  W^{-\frac{1}{p}} (x) (B(x) - m_{Q'} B ) \| ^{p'} \, dx \right) ^{\frac{1}{p'}} &  \lesssim \sum_k \sum_{j = 1}^n |a_k| \| W^{-\frac{1}{p}} [T^*, B] (\W{g}_k \vec{e}_j)  \|_{L^{p'}}  \\ & \lesssim   \sup_k  \sum_{j = 1}^n \| W^{-\frac{1}{p}} (x) [T^*, B] (\W{g}_k \vec{e}_j)  \|_{L^{p'}} \\ & \leq \sum_{j = 1}^n \|\chi_{Q'}  \vec{e}_j\|_{L^{p'}} \\ & \lesssim   |Q|^\frac{1}{p'}  \end{align*}

   Finally, we can use a simple argument from \cite{IM} to get \begin{align*}   \left(\frac{1}{|Q|} \int_{Q} \|  W^{-\frac{1}{p}} (x) (B(x) - m_{Q} B ) \| ^{p'} \, dx \right) ^{\frac{1}{p'}} & \leq  \left(\frac{1}{|Q|} \int_{Q} \|  W^{-\frac{1}{p}} (x) (B(x) - m_{Q'} B ) \| ^{p'} \, dx \right) ^{\frac{1}{p'}}  \\ & +  \left(\int_{Q} \|  W^{-\frac{1}{p}} (x) (m_{Q'} B - m_{Q} B ) \| ^{p'} \, dx \right) ^{\frac{1}{p'}}\end{align*}  where \begin{align*} & \left(\frac{1}{|Q|}\int_{Q} \|  W^{-\frac{1}{p}} (x) (m_{Q'} B - m_{Q} B ) \| ^{p'} \, dx \right) ^{\frac{1}{p'}} \\ & =  \left(\frac{1}{|Q|} \int_{Q} \left\| \frac{1}{|Q|} \int_{Q}  W^{-\frac{1}{p}} (x)  ( B(y) - m_{Q'} B )  \, dy \right\| ^{p'} \, dx \right) ^\frac{1}{p'} \\ & \leq \left(\frac{1}{|Q|} \int_{Q} \left( \frac{1}{|Q|} \int_{Q} \| W^{-\frac{1}{p}} (x) W^{\frac{1}{p}} (y)\| \, \|  W^{-\frac{1}{p}} (y) ( B(y) - m_{Q'} B ) \|  \, dy \right) ^{p'} \, dx \right) ^\frac{1}{p'}     \\ & \leq \left(\frac{1}{|Q|} \int_{Q} \left(\frac{1}{|Q|} \int_{Q}  \| W^{-\frac{1}{p}} (x) W^{\frac{1}{p}} (y) \|^{p} \, dy \right)^\frac{p'}{p} \, dx \right)^\frac{1}{p'}  \\ & \times  \left(\frac{1}{|Q|} \int_Q  \| W^{-\frac{1}{p}}(y)(B(y) - m_{Q'} B ) \|^{p'} \, dy \right) ^\frac{1}{p'} \\ & \leq \|W\|_{\text{A}_p}  \left(\frac{1}{|Q|} \int_Q  \| W^{-\frac{1}{p}}(y)(B(y) - m_{Q'} B )  \|^{p'} \, dy \right) ^\frac{1}{p'} \end{align*}  which completes the proof. \hfill $\square$
\begin{bibdiv}
\begin{biblist}

\bib{B}{article}{
    author={Bloom, S.},
    title={A commutator theorem and weighted BMO},
    journal={Trans. Amer. Math. Soc.},
    volume={292},
    date={1985},
    pages={103 – 122},
    review={\MR{0805955 }}
  }


\bib{CF}{article}{
    author={Coifman, R. R.},
    author={Fefferman, C.},
    title={Weighted norm inequalities for maximal functions and singular integrals},
    journal={Studia Math.},
    volume={51},
    date={1974},
    pages={241 - 250},
    review={\MR{0358205 }}
  }

\bib{CMR}{article}{
author={Cruz-Uribe SFO, D.},
author={Moen, K.},
author={Rodney, S.},
title={Matrix $A_p$ weights, degenerate Sobolev spaces, and mappings of finite distortion},
journal={Preprint available at \href{http://arxiv.org/abs/1505.00699}{http://arxiv.org/abs/1505.00699}  },
}

\bib{CT}{article}{
author={V. Chousionis}
author={X. Tolsa}
title={The $T1$ Theorem},
journal={Preprint available at \href{http://mat.uab.es/~xtolsa/t1.pdf}{http://mat.uab.es/~xtolsa/t1.pdf}}
}

\bib{CW}{article}{
author={Culiuc, A.}
author={Wick, B.}
journal={Preprint}
}


\bib{G}{article}{
    author={ Goldberg, M.},
    title={ Matrix $A_p$ weights via maximal functions},
    journal={Pacific J. Math.},
    volume={211},
    date={2003},
    pages={201 - 220},
    review={\MR{2015733}}
  }

\bib{HLW}{article}{
author={Holmes, I.},
author={Lacey, M.},
author={Wick, B.},
title={Commutators in the Two-Weight Setting},
journal={Math. Ann.},
volume={367},
pages={51 - 80},
review={\MR{3606434}}
}

\bib{HMW}{article}{
    author={Hunt, R.},
    author={Muckenhoupt, B.},
    author={Wheeden, R.},
    title={Weighted norm inequalities for the conjugate function and Hilbert transform},
    journal={Trans. Amer. Math. Soc.},
    volume={176},
    date={1973},
    pages={227 - 251},
    review={\MR{0312139}}
  }

\bib{H}{article}{
    author={Hyt\"{o}nen, T.},
    title={Representation of singular integrals by dyadic operators, and the $A_2$ theorem},
    journal={Preprint available at \href{http://arxiv.org/abs/1108.5119v1}{http://arxiv.org/abs/1108.5119v1}  },
  }

\bib{I}{article}{
author={Isralowitz, J.},
title={Matrix weighted Triebel-Lizorkin bounds: a short proof},
journal={Preprint available at \href{http://arxiv.org/abs/1507.06700}{http://arxiv.org/abs/1507.06700}  },
  }

\bib{I1}{article}{
author={Isralowitz, J.},
title={Boundedness of commutators and H${}^1$-BMO duality in the two matrix weighted setting},
journal={Preprint available at \href{http://arxiv.org/abs/1511.02926}{http://arxiv.org/abs/1511.02926}  },
  }

\bib{IKP}{article}{
author={Isralowitz, J.},
author={Kwon, H. K.},
author={Pott, S.},
title={Matrix weighted norm inequalities for commutators and paraproducts with matrix symbols},
journal={Preprint available at \href{http://arxiv.org/abs/1401.6570}{http://arxiv.org/abs/1401.6570}.}
}

\bib{IM}{article}{
author={Isralowitz, J.},
author={Moen, K.},
title={Matrix weighted Poincare inequalities and applications to degenerate elliptic systems},
journal={Preprint available at \href{http://arxiv.org/abs/1601.00111}{http://arxiv.org/abs/1601.00111}},
}

\bib{IM1}{book}{
    author={T. Iwaniec},
    author={G. Martin},
    title={Geometric Function Theory and Non-linear Analysis},
    publisher={Oxford University Press},
    date={2012},
    review={\MR{2359017}}}

\bib{MW}{article}{
    author={Muckenhoupt, B.},
    author={Wheeden, R.},
    title={Weighted bounded mean oscillation and the Hilbert transform.},
    journal={Studia Math.},
    volume={54},
    date={1975/76},
    pages={221 – 237},
    review={\MR{0399741}}
}

\bib{NT}{article}{
    author={Nazarov, F.},
    author={Treil, S.},
    title={The hunt for a Bellman function: applications to estimates for singular integral operators and to other classical problems of harmonic analysis},
    journal={Algebra i Analiz},
    volume={8},
    date={1996},
    pages={32 - 162},
    review={\MR{1428988}}
  }

\bib{NTV}{article}{
    author={Nazarov, F.},
    author={Treil, S.},
    author={Volberg, A.}
    title={The $Tb$-theorem on non-homogeneous spaces,}
    journal={Acta Math.}
    volume={190}
    date={2003}
    pages={151 - 239}
    review={\MR{1998349}}
    }

\bib{Mo}{article}{
author={Nielsen, M.},
title={On stability of finitely generated shift-invariant systems},
journal={J. Fourier Anal. Appl. },
volume={54},
date={2010},
pages={901 – 920},
review={\MR{2737763}}
}

\bib{P}{article}{
    author={Pereyra, M.C.},
    title={Lecture notes on dyadic harmonic analysis},
    journal={Contemp. Math.},
    volume={289},
    date={2000},
    pages={1 - 60},
    review={\MR{1864538}}
  }

\bib{PTV}{article}{
author={Perez, C.},
author={Treil, S.},
author={Volberg, A.},
title={On A${}_2$ conjecture and corona decomposition of weights},
journal={Preprint available at \href{http://arxiv.org/pdf/1006.2630v1.pdf}{http://arxiv.org/pdf/1006.2630v1.pdf}.}
}

\bib{R}{article}{
    author={Roudenko, S.},
    title={Matrix-weighted Besov spaces},
    journal={Trans. Amer. Math. Soc.},
    volume={355},
    date={2003},
    pages={273 - 314},
    review={\MR{1928089}}
    }

\bib{TV}{article}{
    author={Treil, S.},
    author={Volberg, A.}
    title={Wavelets and the angle between past and future,}
    journal={J. Funct. Anal.}
    volume={143}
    date={1997}
    pages={269 – 308}
    review={\MR{1428818}}}

\bib{V}{article}{
    author={Volberg, A.},
    title={Matrix A${}_p$ weights via $S$-functions},
    journal={J. Amer. Math. Soc.},
    volume={10},
    date={1997},
    pages={445 - 466},
    review={\MR{1423034}}
  }

\end{biblist}
\end{bibdiv}

     \end{document}